\newtheorem{theorem}{Theorem}
\theoremstyle{plain}
\newtheorem{corollary}[theorem]{Corollary}
\newtheorem{definition}[theorem]{Definition}
\newtheorem{lemma}[theorem]{Lemma}
\newtheorem{assumptions}{Assumptions}
\newtheorem{proposition}[theorem]{Proposition}
\newtheorem{remark}[theorem]{Remark}
\numberwithin{equation}{section}
\numberwithin{theorem}{section}
\numberwithin{assumptions}{section}
\numberwithin{assumption}{section}
\newcommand{\R}{\ensuremath{\mathbb{R}}}
\newcommand{\E}{\ensuremath{\mathbb{E}}}
\newcommand{\Q}{\ensuremath{\mathbb{Q}}}
\def\e{{\mathrm{e}}}
\begin{document}
\title[Robustness of quadratic hedging strategies]{Robustness of quadratic hedging strategies in finance via backward stochastic differential equations with jumps}
\date{May 17, 2013.}
\author[Di Nunno]{Giulia Di Nunno}
\address{Center of Mathematics for Applications, University of Oslo,
PO Box 1053 Blindern, N-0316 Oslo, Norway, and, Norwegian School of Economics and Business Administration, Helleveien 30, N-5045 Bergen, Norway.}
\author[Khedher]{Asma Khedher}
\address{Chair of Mathematical Finance,
Technische Universit\"at M\"unchen,
Parkring 11, D-85748 Garching-Hochbruck, Germany,}
\author[Vanmaele]{Mich\`ele Vanmaele}
\address{ Department of Applied Mathematics, Computer Science and Statistics, Ghent University, Krijgslaan 281 S9, 9000 Gent, Belgium}
\email[]{giulian\@@math.uio.no, asma.khedher\@@tum.de, michele.vanmaele@ugent.be}
\urladdr{http://folk.uio.no/giulian/, http://users.ugent.be/~mvmaele/}




\begin{abstract}
We consider a backward stochastic differential equation with jumps (BSDEJ) which is driven by a Brownian motion and a Poisson random measure. We present two candidate-approximations to this BSDEJ and we prove that the solution of each candidate-approximation converges to the solution of the original BSDEJ in a space which we specify. We use this result to investigate in further detail the consequences of the choice of the model to (partial) hedging in incomplete markets in finance. As an application, we consider models in which the small variations in the price dynamics are modeled with a Poisson random measure with infinite activity and models in which these small variations are modeled with a Brownian motion. Using the convergence results on BSDEJs, we show that quadratic hedging strategies are robust towards the choice of the model and we derive an estimation of the model risk.
\end{abstract}

\maketitle


\section{Introduction}

Since Bismut \cite{B} introduced the theory of backward stochastic differential equations (BSDEs), there has been a wide range of literature about 
this topic. Researchers have kept on developing results on these equations and recently, many papers have studied 
BSDEs driven by L\'{e}vy processes (see, e.g., El Otmani \cite{E}, Carbone et al. \cite{CFS}, and \O ksendal and Zhang \cite{BO}). 

In this paper we consider a BSDE which is driven by a Brownian motion and a Poisson random measure (BSDEJ).  
We present two candidate-approximations to this BSDEJ and we prove that 
the solution of each candidate-approximation converges to the solution of the BSDEJ in a space which we specify.
Our aim from considering such approximations is to investigate the effect of the small jumps of the L\'{e}vy process in quadratic hedging strategies in incomplete markets in 
finance (see, e.g., F\"{o}llmer and Schweizer \cite{FSM} and Vandaele and Vanmaele \cite{VV} for more about quadratic hedging strategies in incomplete markets). 
These strategies are related to the study of the F\"{o}llmer-Schweizer decomposition (FS) or/and the Galtchouk-Kunita-Watanabe (GKW) decomposition which are both backward 
stochastic differential equations (see Choulli et al.\ \cite{CVV} for more about these decompositions). 
 
The two most popular types of quadratic hedging strategies are the locally risk-minimizing strategies and the mean-variance hedging strategies.
To explain, let us consider a market in which the risky asset is modelled by a jump-diffusion process $S(t)_{t\geq 0}$. Let $\xi$ be a contingent claim. A locally risk-minimizing strategy is a non self-financing strategy that allows a small cost process $C(t)_{t\geq 0}$ and insists on the fact that the terminal condition of the value of the portfolio  is equal to the contingent claim (see Schweizer \cite{SC}). In other words there exists a locally risk-minimizing strategy for $\xi$ if and only if $\xi$ admits a decomposition of the form 
\begin{equation}\label{FS}
\xi=\xi^{(0)}+ \int_0^T \chi^{FS}(s) dS(s) +\phi^{FS}(T),
\end{equation}
where $\chi^{FS}(t)_{t\geq 0}$ is a process such that the integral in \eqref{FS} exists and $\phi^{FS}(t)_{t\geq 0}$ is a martingale which has to satisfy certain conditions that we will show in the next sections of the paper.   
The decomposition \eqref{FS} is called the FS decomposition. Its financial importance lies in the fact that it directly provides the locally risk-minimizing strategy for $\xi$. In fact at each time $t$ the number of risky assets is given by $\chi^{FS}(t)$ and the cost $C(t)$ is given by $\phi^{FS}(t)+\xi^{(0)}$. 
The mean-variance hedging strategy is a self-financing strategy which minimizes the hedging error in mean square sense (see F\"{o}llmer and Sondermann \cite{FSO} ).

In this paper we study the robustness of these two latter hedging strategies towards the model choice.
%
 %
Hereto we assume that the process $S(t)_{t\geq 0}$ is a jump-diffusion with stochastic factors and driven by a pure jump term with infinite activity and a Brownian motion $W(t)_{t\geq 0}$.
We consider two approximations to $S(t)_{t\geq 0}$. In the first approximation $S_{0,\varepsilon}(t)_{t\geq 0}$\,, we truncate the small jumps and rescale the Brownian motion $W(t)_{t\geq 0}$ to justify the variance of the small jumps.
 In the second approximation $S_{1,\varepsilon}(t)_{t\geq 0}$\,, we truncate the small jumps and replace them by a Brownian motion $B(t)_{t\geq 0}$ independent of $W(t)_{t\geq 0}$ and scaled with the standard deviation of the small jumps. 
 
 This idea of shifting from a model with small jumps to another where those variations are modeled by some appropriately scaled continuous component goes back to Asmussen and Rosinsky \cite{AR}
  who proved that the second model approximates the first one.
 This kind of approximation results, here intended as robustness of the model, are interesting of course from the modeling point of view, but also  
 from a simulation point of view. In fact no easy algorithms are available for simulating general L\'{e}vy processes. However the approximating processes we obtain contain a compound Poisson process and a Brownian motion which are both easy to simulate (see Cont and Tankov \cite{CT}).
 
Benth et al.\ \cite{BDK, BDK2} investigated the consequences of this approximation to option pricing in finance. They consider option prices written in exponential L\'{e}vy processes and they proved the robustness of the option prices after a change of measure where the measure depends on the model choice. For this purpose the authors used Fourier transform techniques.  

 In this paper we focus mostly on the locally risk-minimizing strategies and we show that under some conditions on the parameters of the stock price process, the value of the portfolio, the amount of wealth, and the cost process in a locally risk-minimizing strategy are robust to the choice of the model. Moreover, we prove the robustness of
 the value of the portfolio and the amount of wealth in a mean-variance hedging strategy, where we assume that the parameters of the jump-diffusion are deterministic.
To prove these results we use the existence of the FS decomposition \eqref{FS} and the convergence results on BSDEJs.  
This robustness study is a continuation and a generalization of the results by Benth et al.\ \cite{BDK2}.  In fact we consider more general dynamics and we prove that indeed the locally risk-minimizing strategy  and the mean-variance hedging strategy are robust to the choice of the model. In this context we also mention a paper by Daveloose et al.\ \cite{DKV} in which  the authors studied robustness of quadratic hedging strategies using a Fourier approach and a special choice of dynamics for the price process, namely an exponential L\'{e}vy process.
The paper is organised as follows: in Section 2 we introduce the notations and we make a short introduction to BSDEJs. In Section 3 we  present the two candidate-approximations to the original BSDEJ and we prove the robustness. In Section 4 we prove the robustness of quadratic hedging strategies towards the choice of the model.
In Section 5 we conclude.
\section{Some mathematical preliminaries}
Let $(\Omega,\mathcal{F}, \mathbb{P})$ be a complete probability
space. We fix $T>0$.
Let $W=W(t)$ and $B=B(t)$, $t\in [0,T]$, be two independent standard Wiener processes and $\widetilde{N}=\widetilde{N}(dt,dz)$, $t, z \in [0,T]\times \R_0$ ($\R_0:=\R\setminus\{0\}$)
be a centered Poisson random 
measure, i.e.\ $\widetilde{N}(dt,dz)= N(dt,dz)-\ell(dz)dt$, 
where $\ell(dz)$ is the jump measure and $N(dt,dz)$ is the Poisson random measure independent of the Brownian motions $W$ and $B$ and such that $\E[N(dt,dz)]=\ell(dz)dt$. Define $\mathcal{B}(\R_0)$ as the $\sigma$-algebra
generated by the Borel sets $\bar{U} \subset \R_0$.
We assume that the jump measure has a finite second moment. Namely
$\int_{\R_0}z^2\ell(dz)<\infty.$
We introduce the $\mathbb{P}$-augmented filtrations $\mathbb{F}= (\mathcal{F}_t)_{0\leq t\leq T}$, $\mathbb{F}^\varepsilon= (\mathcal{F}^\varepsilon_t)_{0\leq t\leq T}$,
$\mathbb{G}= (\mathcal{G}_t)_{0\leq t\leq T}$, $\mathbb{G}^\varepsilon= (\mathcal{G}^\varepsilon_t)_{0\leq t\leq T}$\,, respectively by 
$$\mathcal{F}_t= \sigma \Big\{W(s), \int_0^s\int_A\widetilde{N}(du,dz), \quad s\leq t, \quad A \in \mathcal{B}(\R_0)\Big\} \vee \mathcal{N},$$
$$\mathcal{F}^\varepsilon_t= \sigma \Big\{W(s), \int_0^s\int_A\widetilde{N}(du,dz), \quad s\leq t, \quad A \in \mathcal{B}(\{|z|>\varepsilon\})\Big\} \vee \mathcal{N},$$
$$\mathcal{G}_t= \sigma \Big\{W(s), B(s), \int_0^s\int_A\widetilde{N}(du,dz), \quad s\leq t, \quad A \in \mathcal{B}(\R_0)\Big\} \vee \mathcal{N},$$
$$\mathcal{G}^\varepsilon_t= \sigma \Big\{W(s), B(s), \int_0^s\int_A\widetilde{N}(du,dz), \quad s\leq t, \quad A \in \mathcal{B}(\{|z|>\varepsilon\})\Big\} \vee \mathcal{N},$$
where $\mathcal{N}$ represents the set of $\mathbb{P}$-null events in $\mathcal{F}$\,. We introduce the notation $\mathbb{H}=(\mathcal{H}_t)_{0\leq t\leq T}$, such that $\mathcal{H}_t$ will be given
either by the $\sigma$-algebra $\mathcal{F}_t$,  $\mathcal{F}^\varepsilon_t$, $\mathcal{G}_t$ or $\mathcal{G}^\varepsilon_t$ depending on our analysis later. \\
Define the following spaces for all $\beta\geq 0$;
\begin{itemize}
\item $L^2_{T,\beta}$: the space of all $\mathcal{H}_T$-measurable random variables $X:\Omega\rightarrow \R$ such that $$\|X\|^2_{\beta}=\E[\e^{\beta T}X^2]<\infty.$$
\item $H^2_{T,\beta}$: the space of all $\mathbb{H}$-predictable processes $\phi: \Omega\times[0,T]\rightarrow \R$, such that
$$\|\phi\|_{H^2_{T,\beta}}^2=\E\Big[\int_0^T\e^{\beta t}|\phi(t)|^2dt\Big]<\infty.$$
\item $\widetilde{H}^2_{T,\beta}$: the space of all $\mathbb{H}$-adapted, c\`{a}dl\`{a}g processes $\psi:\Omega\times[0,T]\rightarrow \R$ such that
$$\|\psi\|^2_{\widetilde{H}^2_{T,\beta}}=\E\Big[\int_0^T\e^{\beta t}|\psi^2(t)dt|\Big]< \infty.$$
\item $\widehat{H}^2_{T,\beta}$: the space of all $\mathbb{H}$-predictable mappings $\theta:\Omega\times [0,T]\times \R_0\rightarrow\R$, such that 
$$\|\theta\|_{\widehat{H}^2_{T,\beta}}^2=\E\Big[\int_0^T\int_{\R_0}\e^{\beta t}|\theta(t,z)|^2\ell(dz)dt\Big]<\infty.$$
\item $S^2_{T,\beta}$: the space of all $\mathbb{H}$-adapted, c\`{a}dl\`{a}g processes $\gamma:\Omega\times[0,T]\rightarrow \R$ such that
$$\|\gamma\|^2_{S^2_{T,\beta}}=\E[\e^{\beta T}\sup_{0\leq t\leq T}|\gamma^2(t)|]< \infty.$$
\item $\nu_{\beta}=S^2_{T,\beta}\times H^2_{T,\beta}\times \widehat{H}^2_{T,\beta}$.
\item $\widetilde{\nu}_{\beta}=S^2_{T,\beta}\times H^2_{T,\beta}\times \widehat{H}^2_{T,\beta}\times H^2_{T,\beta}$.
\item $\widehat{L}_T^2(\R_0, \mathcal{B}(\R_0), \ell)$: the space of all $\mathcal{B}(\R_0)$-measurable mappings $\psi: \R_0\rightarrow \R$ such that
$$\|\psi\|^2_{\widehat{L}^2_T(\R_0, \mathcal{B}(\R_0), \ell)} = \int_{\R_0}|\psi(z)|^2\ell(dz) <\infty.$$
\end{itemize}
For notational simplicity, when $\beta=0$, we skip the $\beta$ in the notation. 


The following result is crucial in the study of the existence and uniqueness of the backward stochastic differential equations we are interested in.
Indeed it is an application of the decomposition of a random variable $\xi \in L^2_T$ with respect to orthogonal martingale random fields as integrators. See Kunita and Watanabe \cite{KW}, Cairoli and Walsh \cite{CW}, and Di Nunno and Eide \cite{DE} for the essential ideas. In Di Nunno \cite{D, D1}, and Di Nunno and Eide \cite{DE},
 explicit representations of the integrands are given in terms of the non-anticipating derivative. 
 \begin{theorem}\label{representation-theorem}
Let $\mathbb{H}=\mathbb{G}$. Every $\mathcal{G}_T$-measurable random variable $\xi \in L_T^2$ has a unique representation of the form 
\begin{align}\label{representation-theo}
\xi=\xi^{(0)}+\sum_{k=1}^3\int_0^T\int_{\R}\varphi_k(t,z) \mu_k(dt,dz),
\end{align}
where the stochastic integrators 
\begin{align*}
\mu_1(dt,dz)&=W(dt)\times \delta_0(dz), \quad \mu_2(dt,dz)=B(dt)\times \delta_0(dz), \\
\quad \mu_3(dt,dz)&=\widetilde{N}(dt,dz)\mathbf{1}_{[0,T]\times\R_0}(t,z),
\end{align*}
are orthogonal martingale random fields on $[0,T]\times\R_0$ 
and the stochastic integrands are 
$\varphi_1$, $\varphi_2 \in H^2_{T}$ and $\varphi_3 \in \widehat{H}^2_{T}$. Moreover $\xi^{(0)}=\E[\xi]$.\\
Let $\mathbb{H}=\mathbb{G}^\varepsilon$. Then for every $\mathcal{G}_T^\varepsilon$-measurable random variable $\xi \in L_T^2$, \eqref{representation-theo} holds 
with $\mu_3(dt,dz)=\widetilde{N}(dt,dz)\mathbf{1}_{[0,T]\times \{|z|>\varepsilon\}}(t,z)$.\\
Let $\mathbb{H}=\mathbb{F}$. Then for every $\mathcal{F}_T$-measurable random variable $\xi \in L_T^2$, \eqref{representation-theo} holds with $\mu_2(dt,dz)=0$.\\
Let $\mathbb{H}=\mathbb{F}^\varepsilon$. Then for every $\mathcal{F}_T^\varepsilon$-measurable random variable $\xi \in L_T^2$, \eqref{representation-theo} holds 
with $\mu_2(dt,dz)=0$ and $\mu_3(dt,dz)=\widetilde{N}(dt,dz)\mathbf{1}_{[0,T]\times \{|z|>\varepsilon\}}(t,z)$.
\end{theorem}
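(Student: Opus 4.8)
The plan is to deduce the statement as an application of the abstract decomposition of square-integrable random variables with respect to orthogonal martingale random fields, as developed in Cairoli and Walsh \cite{CW} and Di Nunno and Eide \cite{DE}; the explicit form of the integrands in terms of the non-anticipating derivative is not needed for existence and uniqueness and can be imported afterwards from \cite{D,D1,DE}. Concretely, the argument falls into three parts: (i) $\mu_1,\mu_2,\mu_3$ constitute a system of orthogonal martingale random fields on $[0,T]\times\R$ adapted to $\mathbb{G}$; (ii) this system is \emph{complete}, in the sense that it generates $\mathbb{G}$ and satisfies the predictable representation property; (iii) existence, uniqueness, and the identity $\xi^{(0)}=\E[\xi]$ follow from the abstract theorem together with the It\^o isometry.

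For part (i) I would first check that each $t\mapsto\mu_k([0,t]\times A)$ is a square-integrable $\mathbb{G}$-martingale with orthogonal values over disjoint time-space sets. For $\mu_1$ and $\mu_2$ this is the martingale property of the Wiener integrals, the factor $\delta_0(dz)$ merely concentrating the field on the line $\{z=0\}$; for $\mu_3$ it is the standard martingale property of the integral against the compensated measure $\widetilde N$, guaranteed by the assumption $\int_{\R_0}z^2\,\ell(dz)<\infty$. The decisive point is mutual orthogonality, $\langle\mu_j,\mu_k\rangle=0$ for $j\neq k$. The pair $(\mu_1,\mu_2)$ is orthogonal because $W$ and $B$ are independent Wiener processes; the pairs $(\mu_1,\mu_3)$ and $(\mu_2,\mu_3)$ are orthogonal both because the Brownian motions are independent of $N$ and because the families are carried by disjoint subsets of the mark space, namely $\{z=0\}$ for $\mu_1,\mu_2$ versus $\R_0$ for $\mu_3$.

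Part (ii) is the heart of the matter and the step I expect to be the main obstacle. Here one must prove that the only square-integrable $\mathcal{G}_T$-measurable random variables orthogonal, in $L^2_T$, to every stochastic integral built on $\mu_1,\mu_2,\mu_3$ are the constants. Since $\mathbb{G}$ is by construction the augmented filtration generated jointly by the independent objects $W$, $B$, and $\widetilde N$, I would establish the joint predictable representation property from the individual ones: each Brownian motion enjoys the classical representation property and $\widetilde N$ enjoys the representation property for compensated Poisson random measures. The standard way to combine them is to show that the linear span of products of the corresponding Dol\'eans--Dade stochastic exponentials is total in $L^2(\mathcal{G}_T)$; independence of the three sources makes these exponentials factorize, and their totality then follows from the fact that $\mathcal{G}_T$ is generated by $W$, $B$, and $N$. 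This totality is precisely completeness of the orthogonal system.

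Finally, for part (iii), the abstract decomposition theorem applied to the complete orthogonal system $\{\mu_1,\mu_2,\mu_3\}$ yields integrands $\varphi_1,\varphi_2\in H^2_T$ and $\varphi_3\in\widehat{H}^2_T$ realising \eqref{representation-theo}. Uniqueness is immediate from orthogonality: two representations of the same $\xi$ differ by a stochastic integral equal to zero, and the It\^o isometry
\[
\E\big[(\xi-\xi^{(0)})^2\big]=\|\varphi_1\|_{H^2_T}^2+\|\varphi_2\|_{H^2_T}^2+\|\varphi_3\|_{\widehat{H}^2_T}^2
\]
forces each integrand to vanish in its norm; taking expectations in \eqref{representation-theo} and using that every stochastic integral has zero mean gives $\xi^{(0)}=\E[\xi]$. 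The three variant filtrations are then handled by specialisation rather than by a new argument: for $\mathbb{G}^\varepsilon$ one replaces the mark space $\R_0$ by $\{|z|>\varepsilon\}$ throughout, for $\mathbb{F}$ one drops the generator $B$ so that $\mu_2\equiv0$, and for $\mathbb{F}^\varepsilon$ one does both; in each case the reduced system is still a complete orthogonal system for the correspondingly smaller filtration, so parts (i)--(iii) apply verbatim.
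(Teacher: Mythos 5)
Your proposal is correct and follows essentially the same route as the paper: the paper gives no self-contained proof but presents the theorem precisely as an application of the decomposition of square-integrable random variables with respect to orthogonal martingale random fields, citing Kunita and Watanabe \cite{KW}, Cairoli and Walsh \cite{CW}, and Di Nunno and Eide \cite{DE}, which is exactly the abstract result you invoke, with your orthogonality check, the completeness argument via independence and products of stochastic exponentials, and the isometry-based uniqueness supplying the standard details behind that citation. Your specialisation to $\mathbb{G}^\varepsilon$, $\mathbb{F}$, and $\mathbb{F}^\varepsilon$ by shrinking the mark space and/or dropping $B$ likewise matches the paper's treatment.
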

As we shall see the above result plays a central role in the analysis that follows. 
Let us now consider a pair $(\xi,f)$, where $\xi$ is called the terminal condition and $f$ the driver such that 

\begin{assumptions}\label{lipschitz-assumption}
\text{}\\
(A) $\xi \in L^2_T$ is $\mathcal{H}_T$-measurable\\
 (B) $f: \Omega \times [0,T]\times \R \times \R \times \R\rightarrow \R$ such that 
\begin{itemize}
\item $f(\cdot,x,y,z)$ is $\mathbb{H}$-progressively measurable for all $x,y,z$,
\item $f(\cdot,0,0,0) \in H_T^2$,
\item $f(\cdot,x,y,z)$ satisfies a uniform Lipschitz condition in $(x,y,z)$, i.e.\ there exists a constant $C$ such that for all $(x_i,y_i,z_i) \in \R\times\R\times \widehat{L}^2_T(\R_0, \mathcal{B}(\R_0),\ell)$,
$i=1,2$ we have
\begin{align*}
&|f(t,x_1,y_1,z_1)-f(t,x_2,y_2,z_2)|\\
&\qquad \leq C \Big(|x_1-x_2|+|y_1-y_2|+\|z_1-z_2\|\Big), \quad \mbox{for all } t.
\end{align*}
\end{itemize}
\end{assumptions}
We consider the following backward stochastic differential equation with jumps (in short BSDEJ)
\begin{equation}\label{bsdes}
\left\{ \begin{array}{ll} 
-dX(t) &= f(t,X(t),Y(t),Z(t,\cdot ))dt - Y(t)dW(t) - \displaystyle\int_{\R_0}Z(t,z)\widetilde{N}(dt,dz),\\
X(T) &=\xi.
 \end{array} \right.
\end{equation}
\begin{definition} 
A solution to the BSDEJ \eqref{bsdes} is a triplet of $\mathbb{H}$-adapted or predictable processes $(X,Y,Z) \in \nu$ satisfying 
\begin{align*}
X(t) &= \xi + \int_t^Tf(s,X(s),Y(s),Z(s,\cdot))ds -\int_t^T Y(s)dW(s)\\
&\qquad  - \int_t^T\int_{\R_0}Z(s,z)\widetilde{N}(ds,dz), \qquad 0\leq t\leq T.
 \end{align*}
\end{definition}
The existence and uniqueness result for the solution of the BSDEJ \eqref{bsdes} is guaranteed by the following result proved in Tang and Li \cite{TL}. 
\begin{theorem}\label{existence-and-uniqueness}
Given a pair $(\xi,f)$ satisfying Assumptions \ref{lipschitz-assumption}(A) and (B), there exists a unique solution $(X,Y,Z) \in \nu$ to the BSDEJ \eqref{bsdes}.
\end{theorem}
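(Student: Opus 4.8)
The plan is to obtain the solution as the unique fixed point of a contraction mapping on $\nu_\beta$, exploiting the fact that for each fixed $\beta\geq 0$ the norm on $\nu_\beta$ is equivalent to that on $\nu$ (since $[0,T]$ is bounded and $\e^{\beta t}$ is bounded above and below by positive constants). It therefore suffices to produce a fixed point in $\nu_\beta$ for one conveniently chosen large $\beta$, and that fixed point automatically lies in $\nu$.

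First I would set up the map. Given $(U,V,K)\in\nu_\beta$, the Lipschitz assumption together with $f(\cdot,0,0,0)\in H^2_T$ guarantees that the process $g(s):=f(s,U(s),V(s),K(s,\cdot))$ belongs to $H^2_T$. Hence $\xi+\int_0^T g(s)\,ds$ is square integrable and $\mathcal{H}_T$-measurable, and the martingale $M(t):=\E[\xi+\int_0^T g(s)\,ds\mid\mathcal{H}_t]$ is square integrable. Applying Theorem \ref{representation-theorem} (in the filtration $\mathbb{F}$, where the $B$-integrator vanishes) to the terminal variable and then taking conditional expectations yields predictable integrands $Y\in H^2_T$ and $Z\in\widehat{H}^2_T$ with
$$M(t)=M(0)+\int_0^t Y(s)\,dW(s)+\int_0^t\int_{\R_0}Z(s,z)\,\widetilde{N}(ds,dz).$$
Setting $X(t):=M(t)-\int_0^t g(s)\,ds$ produces a triplet $(X,Y,Z)$ solving the BSDEJ with driver frozen at $(U,V,K)$ and terminal value $\xi$; I would define $\Phi(U,V,K):=(X,Y,Z)$ and verify, via Doob's maximal inequality for $X$ and the It\^o isometry for $Y$ and $Z$, that $\Phi$ maps $\nu_\beta$ into itself.

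The core of the argument is to show that $\Phi$ is a contraction for $\beta$ large. Taking two inputs and writing $\Delta X,\Delta Y,\Delta Z,\Delta g$ for the differences of outputs and frozen drivers, I would apply It\^o's formula to $\e^{\beta t}|\Delta X(t)|^2$ between $t$ and $T$, using that $\Delta X(T)=0$. The crucial gain is the term $\beta\int_t^T\e^{\beta s}|\Delta X(s)|^2\,ds$, which carries a favourable sign; the martingale parts vanish in expectation, the quadratic variations of the $W$- and $\widetilde{N}$-integrals reproduce exactly $\|\Delta Y\|^2_{H^2_{T,\beta}}+\|\Delta Z\|^2_{\widehat{H}^2_{T,\beta}}$, and the Lipschitz bound controls $|\Delta g(s)|$ by $|\Delta U(s)|+|\Delta V(s)|+\|\Delta K(s,\cdot)\|$. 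A Young-type splitting of the cross term $2\,\Delta X\,\Delta g$ then lets the $\beta$-term absorb the contribution of $\Delta X$, giving a bound of the form
$$\|\Delta X\|^2_{S^2_{T,\beta}}+\|\Delta Y\|^2_{H^2_{T,\beta}}+\|\Delta Z\|^2_{\widehat{H}^2_{T,\beta}}\leq\kappa\Big(\|\Delta U\|^2_{S^2_{T,\beta}}+\|\Delta V\|^2_{H^2_{T,\beta}}+\|\Delta K\|^2_{\widehat{H}^2_{T,\beta}}\Big)$$
with $\kappa=O(1/\beta)<1$ for $\beta$ large enough.

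The main obstacle I anticipate is precisely this It\^o-formula estimate: one must treat the jump part carefully, since the quadratic variation of the compensated Poisson integral is what produces the $\widehat{H}^2_{T,\beta}$-norm of $\Delta Z$, and the vanishing in expectation of the stochastic-integral terms must be justified by a localisation argument ensuring the relevant local martingales are true martingales before expectations are taken. Once the contraction estimate is in place, the Banach fixed point theorem yields a unique $(X,Y,Z)\in\nu_\beta=\nu$, and this fixed point is by construction the unique solution of the BSDEJ \eqref{bsdes}, establishing both existence and uniqueness.
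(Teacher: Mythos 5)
Your proposal is correct and takes essentially the same route as the paper: the paper (which attributes this theorem to Tang and Li \cite{TL} and gives the argument in its Appendix for the analogous Theorem \ref{existence-1}) likewise constructs the solution map by conditional expectation plus the representation Theorem \ref{representation-theorem}, and establishes a strict contraction in the $\e^{\beta t}$-weighted norms via It\^{o}'s formula with $\beta$ of order $C^2$, concluding by the Banach fixed point theorem. The only difference is organisational: the paper runs the contraction in $\widetilde{H}^2_{T,\beta}\times H^2_{T,\beta}\times \widehat{H}^2_{T,\beta}$ and upgrades $X$ to $S^2_{T,\beta}$ afterwards via Burkholder's inequality, whereas you fold the $S^2$-estimate (via Doob/Burkholder) directly into the contraction step on $\nu_\beta$.
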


\section{Two candiate-approximating BSDEJs and robustness}\label{robustness}
\subsection{Two candiate-approximating BSDEJs}
In this subsection we present two candidate approximations of the BSDEJ \eqref{bsdes}. Let $\mathbb{H}=\mathbb{F}$ and $f^0$ be a function satisfying Assumptions \ref{lipschitz-assumption}(B). 
In the first candidate-approximation, we approximate the terminal condition $\xi$ of the BSDEJ \eqref{bsdes} by a sequence of random variables $\xi^0_\varepsilon \in L^2_T$, $\mathcal{F}_T$-measurable such that $$\lim_{\varepsilon\rightarrow 0}\xi^0_\varepsilon=\xi,  \quad \mbox{ in }L^2_T.$$ 
We obtain the following approximation
\begin{equation}\label{bsdes-approximation}
\left\{ \begin{array}{ll}
-d{X}_\varepsilon(t)&= f^0(t,{X}_\varepsilon(t), Y_\varepsilon(t), Z_{\varepsilon}(t,\cdot))dt -Y_\varepsilon(t)dW(t) -\displaystyle \int_{\R_0}Z_{\varepsilon}(t,z)\widetilde{N}(dt,dz),\\
X_\varepsilon(T) &= \xi^0_\varepsilon. \end{array} \right.
\end{equation}
We present the following condition on $f^0$, which we need to impose when we study the robustness results in the next section. For all $(x_i,y_i,z_i) \in \mathbb{R}\times \mathbb{R}\times \widehat{L}_T^2(\mathbb{R}_0,\mathcal{B}(\mathbb{R}_0),\ell)$, $i=1,2$, it holds that 
\begin{align}\label{condition-f-zero}
&|f(t,x_1,y_1,z_1)-f^0(t,x_2,y_2,z_2)|\nonumber\\
&\qquad \leq C \Big(|x_1-x_2|+|y_1-y_2|+\|z_1-z_2\|\Big), \quad \mbox{for all } t,
\end{align}
where $C$ is a positive constant.
In the next theorem we state the existence and uniqueness of the solution $(X_\varepsilon, Y_\varepsilon, Z_{\varepsilon}) \in \nu$ of the BSDEJ \eqref{bsdes-approximation}. This result on existence and uniqueness of the solution to \eqref{bsdes-approximation} is along the same lines as the proof of Theorem \ref{existence-and-uniqueness}, see also Tang and Li \cite{TL}. We present the proof in the Appendix, Section \ref{appendix}.

\begin{theorem}\label{existence-1}
 Let $\mathbb{H}=\mathbb{F}$. 
Given a pair $(\xi^0_\varepsilon,f^0)$ such that $\xi^0_\varepsilon \in L^2_T$ is $\mathcal{F}_T$-measurable and $f^0$ satisfies Assumptions \ref{lipschitz-assumption}(B), then there exists a unique solution $(X_\varepsilon, Y_\varepsilon, Z_{\varepsilon}) \in \nu$ to the 
BSDEJ \eqref{bsdes-approximation}.
\end{theorem}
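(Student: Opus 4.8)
The plan is to establish both existence and uniqueness by a Banach fixed-point argument carried out on the weighted product space $\nu_\beta = S^2_{T,\beta}\times H^2_{T,\beta}\times \widehat{H}^2_{T,\beta}$, for a weight $\beta$ to be chosen large at the end. Since $T<\infty$, the norm of $\nu_\beta$ is equivalent to that of $\nu=\nu_0$ for each fixed $\beta\geq 0$, so a solution found in $\nu_\beta$ is automatically a solution in $\nu$, and the extra freedom in $\beta$ is exactly what will make the relevant map a contraction.

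First I would define a map $\Phi$ on $\nu_\beta$ by freezing the driver. Given $(U,V,K)\in\nu_\beta$, set $g(s):=f^0(s,U(s),V(s),K(s,\cdot))$; the Lipschitz bound and the condition $f^0(\cdot,0,0,0)\in H^2_T$ from Assumptions \ref{lipschitz-assumption}(B) ensure $g\in H^2_T$, hence $\xi^0_\varepsilon+\int_0^T g(s)\,ds\in L^2_T$. Consider the $\mathbb{F}$-martingale $M(t)=\E\big[\xi^0_\varepsilon+\int_0^T g(s)\,ds\mid\mathcal{F}_t\big]$. Because $\mathbb{H}=\mathbb{F}$, Theorem \ref{representation-theorem} applies with $\mu_2=0$ and yields a unique pair $(Y,Z)\in H^2_T\times\widehat{H}^2_T$ with $M(t)=M(0)+\int_0^t Y(s)\,dW(s)+\int_0^t\int_{\R_0}Z(s,z)\widetilde{N}(ds,dz)$. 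The absence of the $B$-integrator in this representation is precisely what keeps the candidate solution consistent with the form of \eqref{bsdes-approximation} and places it in $\nu$ rather than in $\widetilde{\nu}$. Setting $X(t):=M(t)-\int_0^t g(s)\,ds$ and $\Phi(U,V,K):=(X,Y,Z)$, one checks directly that $X(T)=\xi^0_\varepsilon$ and that $(X,Y,Z)\in\nu_\beta$ solves the BSDEJ with driver frozen at $(U,V,K)$.

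Next I would prove that $\Phi$ is a contraction for $\beta$ large. Taking two inputs $(U,V,K)$, $(U',V',K')$ with images $(X,Y,Z)$, $(X',Y',Z')$ and writing $\Delta X=X-X'$, $\Delta g=g-g'$, etc., I would apply It\^o's formula to $\e^{\beta s}|\Delta X(s)|^2$ on $[t,T]$, take expectations so that the compensated local-martingale parts vanish, and use $\Delta X(T)=0$ to obtain
\begin{align*}
&\E\big[\e^{\beta t}|\Delta X(t)|^2\big]+\E\Big[\int_t^T \e^{\beta s}\big(\beta|\Delta X(s)|^2+|\Delta Y(s)|^2+\|\Delta Z(s,\cdot)\|^2\big)\,ds\Big]\\
&\qquad =\E\Big[\int_t^T 2\e^{\beta s}\,\Delta X(s)\,\Delta g(s)\,ds\Big].
\end{align*}
Bounding the right-hand cross term by Young's inequality and estimating $|\Delta g(s)|\leq C(|\Delta U(s)|+|\Delta V(s)|+\|\Delta K(s,\cdot)\|)$ via the uniform Lipschitz property of $f^0$ from Assumptions \ref{lipschitz-assumption}(B), one absorbs the $|\Delta X|^2$ contribution into the $\beta|\Delta X|^2$ term on the left and arrives at a bound of the form $\|\Phi(U,V,K)-\Phi(U',V',K')\|_{\nu_\beta}^2\leq\kappa\,\|(U,V,K)-(U',V',K')\|_{\nu_\beta}^2$ with $\kappa<1$ once $\beta$ is taken sufficiently large. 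The Banach fixed-point theorem then gives a unique fixed point, which is the unique solution of \eqref{bsdes-approximation} in $\nu$.

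The step I expect to be the main obstacle is this contraction estimate: carrying out the It\^o expansion for the jump-diffusion, correctly identifying the quadratic-variation contribution of the compensated Poisson integral that produces the $\|\Delta Z(s,\cdot)\|^2$ term on the left, and then controlling the cross term so that the Lipschitz constant $C$ is absorbed uniformly and $\kappa$ can be pushed below $1$ by \emph{increasing $\beta$} rather than by shrinking $T$. This bookkeeping of the jump term is what distinguishes the argument from the purely Brownian case and is where the careful estimates must be done.
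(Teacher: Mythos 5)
Your overall strategy is exactly the paper's: freeze the driver at $(U,V,K)$, use the representation result (Theorem \ref{representation-theorem} with $\mathbb{H}=\mathbb{F}$, so that no $B$-component appears) to produce the pair $(Y,Z)$, define the map $\Phi$, and obtain a fixed point from a contraction estimate derived by applying It\^o's formula to $\e^{\beta s}|\Delta X(s)|^2$, using the Lipschitz property of $f^0$, Young's inequality, and a large weight $\beta$. The It\^o identity you display, the role of the jump quadratic variation in producing the $\|\Delta Z(s,\cdot)\|^2$ term, and the absorption of the cross term into $\beta|\Delta X|^2$ are all precisely what the paper does in its Appendix (where $\beta=6C^2+1$ yields contraction constant $\tfrac12$).

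There is, however, one genuine gap: you claim the contraction on $\nu_\beta=S^2_{T,\beta}\times H^2_{T,\beta}\times\widehat{H}^2_{T,\beta}$, but the estimate you derive does not control the $S^2_{T,\beta}$-norm of $\Delta X$. Your identity bounds $\E[\e^{\beta t}|\Delta X(t)|^2]$ for each fixed $t$ and the integrated quantity $\E\big[\int_t^T\e^{\beta s}|\Delta X(s)|^2ds\big]$, whereas the $S^2_{T,\beta}$-norm requires $\E\big[\e^{\beta T}\sup_{0\leq t\leq T}|\Delta X(t)|^2\big]$; moving the supremum inside the expectation needs an additional Burkholder--Davis--Gundy argument applied to the martingale terms $\int\e^{\beta s}\Delta X(s)\Delta Y(s)dW(s)$ and $\int\int_{\R_0}\e^{\beta s}\Delta X(s-)\Delta Z(s,z)\widetilde{N}(ds,dz)$, which your sketch omits. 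As written, Banach's theorem cannot be invoked on $\nu_\beta$ (and if you instead equip $\nu_\beta$ with the weaker integrated norm, it is no longer complete). The paper avoids this by running the fixed-point argument in $\widetilde{H}^2_{T,\beta}\times H^2_{T,\beta}\times\widehat{H}^2_{T,\beta}$, where the estimate genuinely is a contraction on a complete space, and only at the very end upgrading the first component of the fixed point to $S^2_{T,\beta}$ via Burkholder's inequality (citing Tang and Li). Your proof can be repaired either by following that route or by inserting the BDG step into the contraction estimate itself; either fix is standard, but one of them is needed.
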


Let $\mathbb{H}=\mathbb{G}$. We present the second-candidate approximation to \eqref{bsdes}. Hereto we introduce a sequence of random variables $\mathcal{G}_T$-measurable $\xi_\varepsilon^1 \in L^2_T$ such that $$\lim_{\varepsilon \rightarrow 0} \xi_\varepsilon^1=\xi$$ and a function $f^1$ satisfying
\begin{assumptions}\label{lipschitz-assumption-1}
$f^1: \Omega \times [0,T]\times \R \times \R \times \R\times \R\rightarrow \R$ is such that 
\begin{itemize}
\item $f^1(\cdot,x,y,z,\zeta)$ is $\mathbb{H}$-progressively measurable for all $x,y,z,\zeta$,
\item $f^1(\cdot,0,0,0,0) \in H_T^2$,
\item $f^1(\cdot,x,y,z,\zeta)$ satisfies a uniform Lipschitz condition in $(x,y,z,\zeta)$.
\end{itemize}
\end{assumptions}
Besides Assumptions \ref{lipschitz-assumption-1} which we impose on $f^1$,  we need moreover to assume the following condition in the robustness analysis later on. For all $(x_i,y_i,z_i,\zeta) \in \R\times\R\times \widehat{L}_T^2(\R_0, \mathcal{B}(\R_0), \ell)\times\R$, $i=1,2$, and for a positive constant $C$ it holds that
\begin{align}\label{stronger-Lipcshitz-condition)}
&|f(t,x_1,y_1,z_1)-f^1(t,x_2,y_2,z_2,\zeta)|\nonumber\\
&\quad \leq C \Big(|x_1-x_2|+|y_1-y_2|+\|z_1-z_2\| +|\zeta|\Big) \quad \mbox{for all } t. 
\end{align}
We introduce the second candidate BSDEJ approximation to \eqref{bsdes} which reads as follows 
\begin{equation}\label{bsdes-approximation1}
\left\{ \begin{array}{ll}
-dX_\varepsilon(t)&= f^1(t,X_\varepsilon(t),Y_\varepsilon(t),Z_{\varepsilon}(t,\cdot), \zeta_\varepsilon(t))dt -Y_\varepsilon(t)dW(t) - \displaystyle\int_{\R_0}
Z_{\varepsilon}(t,z)\widetilde{N}(dt,dz)\\
&\qquad  -\zeta_\varepsilon(t)dB(t),\\
X_\varepsilon(T) &= \xi^1_\varepsilon,
\end{array} \right.
\end{equation}
where we use the same notations as in \eqref{bsdes-approximation}. $B$ is a Brownian motion independent of $W$. Because of the presence of the additional noise $B$ the solution processes are expected to be $\mathbb{G}$-adapted (or predictable). Notice that the solution of such equation is given by $(X_\varepsilon, Y_\varepsilon, Z_{\varepsilon},\zeta_\varepsilon) \in \widetilde{\nu}$.
In the next theorem we state the existence and uniqueness of the solution of the equation \eqref{bsdes-approximation1}. The proof is very similar to the proof of Theorem \ref{existence-1}. However we work under the $\sigma$-algebra $\mathcal{G}_t$.
\begin{theorem}
 Let $\mathbb{H}=\mathbb{G}$. 
Given a pair $(\xi^1_\varepsilon,f^1)$ such that $\xi^1_\varepsilon \in L^2_T$ is $\mathcal{G}_T$-measurable and $f^1$ satisfies Assumptions \ref{lipschitz-assumption-1}, then there exists a unique solution $(X_\varepsilon, Y_\varepsilon, Z_{\varepsilon},\zeta_\varepsilon) \in \widetilde{\nu}$ to the 
BSDEJ \eqref{bsdes-approximation}.
\end{theorem}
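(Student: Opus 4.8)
The plan is to follow the same fixed-point scheme used for Theorem~\ref{existence-1}, now exploiting the martingale representation of Theorem~\ref{representation-theorem} in the case $\mathbb{H}=\mathbb{G}$, which accounts for the three orthogonal integrators $W$, $B$, and $\widetilde{N}$. Fix $\beta>0$ to be chosen later and work in the space $\widetilde{\nu}_\beta=S^2_{T,\beta}\times H^2_{T,\beta}\times \widehat{H}^2_{T,\beta}\times H^2_{T,\beta}$, whose norm is equivalent to that of $\widetilde{\nu}$ for each fixed $\beta$ and $T$. Given a quadruple $(U,V,K,L)\in\widetilde{\nu}_\beta$, I would first check that the $\mathcal{G}_T$-measurable random variable
$$\Xi := \xi^1_\varepsilon+\int_0^T f^1(s,U(s),V(s),K(s,\cdot),L(s))\,ds$$
lies in $L^2_T$; this follows from $\xi^1_\varepsilon\in L^2_T$, from $f^1(\cdot,0,0,0,0)\in H^2_T$, and from the uniform Lipschitz property of $f^1$, which bounds the integrand by a constant multiple of $|U|+|V|+\|K\|+|L|+|f^1(\cdot,0,0,0,0)|$.

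Next I would apply Theorem~\ref{representation-theorem} to the square-integrable martingale $M(t)=\E[\Xi\mid\mathcal{G}_t]$, obtaining a unique representation
$$M(t)=\E[\Xi]+\int_0^t Y(s)\,dW(s)+\int_0^t \zeta(s)\,dB(s)+\int_0^t\int_{\R_0}Z(s,z)\,\widetilde{N}(ds,dz)$$
with $Y,\zeta\in H^2_T$ and $Z\in\widehat{H}^2_T$. Setting $X(t)=M(t)-\int_0^t f^1(s,U(s),V(s),K(s,\cdot),L(s))\,ds$ defines a map $\Phi(U,V,K,L)=(X,Y,Z,\zeta)$ whose fixed points are exactly the solutions of \eqref{bsdes-approximation1}. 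That $X\in S^2_{T,\beta}$ follows from the Burkholder--Davis--Gundy and Doob inequalities applied to $M$, so $\Phi$ maps $\widetilde{\nu}_\beta$ into itself.

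The core of the argument is to show that $\Phi$ is a contraction for $\beta$ large enough. Taking two inputs $(U_i,V_i,K_i,L_i)$, $i=1,2$, writing $\Delta U=U_1-U_2$ and analogously for the other components, and $(\Delta X,\Delta Y,\Delta Z,\Delta\zeta)$ for the difference of their images, I would apply the It\^o formula to $\e^{\beta t}|\Delta X(t)|^2$ on $[0,T]$, using that both images share the terminal value $\xi^1_\varepsilon$ so that $\Delta X(T)=0$, and then take expectations. The stochastic integrals against $W$, $B$, and $\widetilde{N}$ are true martingales and vanish in expectation, and since the integrators $\mu_1,\mu_2,\mu_3$ are orthogonal no cross terms survive; the quadratic-variation terms then produce
$$\E\int_0^T\e^{\beta s}\Big(|\Delta Y(s)|^2+|\Delta\zeta(s)|^2+\int_{\R_0}|\Delta Z(s,z)|^2\ell(dz)\Big)ds$$
together with the term $\beta\,\E\int_0^T\e^{\beta s}|\Delta X(s)|^2\,ds$ coming from the weight. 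Bounding the driver difference through the Lipschitz constant $C$ of $f^1$ and applying Young's inequality to split off the $|\Delta X|$ factor, one collects the terms into an estimate of the form
$$\big\|\Phi(U_1,V_1,K_1,L_1)-\Phi(U_2,V_2,K_2,L_2)\big\|^2_{\widetilde{\nu}_\beta}\leq \frac{K}{\beta}\,\big\|(\Delta U,\Delta V,\Delta K,\Delta L)\big\|^2_{\widetilde{\nu}_\beta}$$
for a constant $K$ depending only on $C$ and $T$. Choosing $\beta$ so that $K/\beta<1$ makes $\Phi$ a contraction, and the Banach fixed point theorem delivers the unique fixed point, which is the desired solution in $\widetilde{\nu}_\beta=\widetilde{\nu}$.

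I expect the only genuine novelty compared with Theorem~\ref{existence-1} to lie in carrying the extra component $\zeta$, arising from the independent Brownian motion $B$, through the representation step and the It\^o expansion. The orthogonality of the three martingale random fields $\mu_1,\mu_2,\mu_3$ in Theorem~\ref{representation-theorem} is precisely what guarantees that the $W$-, $B$-, and $\widetilde{N}$-integrals contribute only their own quadratic variations, with no mixed terms, so that $|\Delta\zeta|^2$ enters the energy estimate on the same footing as $|\Delta Y|^2$ and $\|\Delta Z\|^2$. Once this bookkeeping is in place, the selection of $\beta$ and the contraction conclusion go through as in the $\mathbb{F}$ case.
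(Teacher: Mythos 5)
Your proposal is correct and matches the paper's own route: the paper proves this theorem by observing it is "very similar" to Theorem~\ref{existence-1}, whose appendix proof is exactly your scheme --- martingale representation (Theorem~\ref{representation-theorem}, here in the $\mathbb{H}=\mathbb{G}$ case, which supplies the extra $B$-component $\zeta$), a fixed-point map on the weighted spaces, an It\^{o}/Young energy estimate, and a choice of $\beta$ making the map a strict contraction. Your variant of choosing $\beta$ large so that $K/\beta<1$, rather than fixing $\beta=6C^2+1$ to get the factor $\tfrac12$, is an inessential difference.
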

It is expected that when \eqref{stronger-Lipcshitz-condition)} holds, the process 
$\zeta_\varepsilon$ vanishes when $\varepsilon$ goes to $0$. This will be shown in the next subsection in which we also prove the robustness of the BSDEJs. 
\subsection{Robustness of the BSDEJs}
Before we show the convergence of the two equations  \eqref{bsdes-approximation} and \eqref{bsdes-approximation1} to the BSDEJ \eqref{bsdes} when $\varepsilon$ goes to $0$, we present the following lemma in which we prove the boundedness of the solution of the equation \eqref{bsdes}. We need this lemma for our analysis in the next section.
\begin{lemma}\label{solution-boundedness}
Let $(X,Y,Z)$ be the solution of \eqref{bsdes}. Then we have for all $t\in[0,T]$, 
$$\E\Big[\int_t^TX^2(s)ds\Big]+\E\Big[\int_t^TY^2(s)ds\Big] +\E\Big[\int_t^T\int_{\R_0}Z^2(s,z)\ell(dz)ds\Big]\leq C \E[\xi^2],$$
where $C$ is a positive constant.
\end{lemma}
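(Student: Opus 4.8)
The plan is to derive an energy estimate by applying the Itô formula for semimartingales with jumps to $X^2$ on $[t,T]$, taking expectations, and closing the resulting inequality with Young's inequality and a Gronwall argument.

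First I would apply the jump Itô formula to $s\mapsto X^2(s)$. Since $dX(s)=-f(s,X(s),Y(s),Z(s,\cdot))ds+Y(s)dW(s)+\int_{\R_0}Z(s,z)\widetilde N(ds,dz)$, the second-order continuous term contributes $\int_t^T Y^2(s)ds$, while the jumps contribute $\sum_{t<s\le T}(\Delta X(s))^2=\int_t^T\int_{\R_0}Z^2(s,z)N(ds,dz)$. Crucially, the first-order jump correction $2X(s^-)\Delta X(s)$ must be kept inside the compensated integral $\int_t^T\int_{\R_0}2X(s^-)Z(s,z)\widetilde N(ds,dz)$ rather than split against $\ell(dz)ds$; this matters because under infinite activity the L\'evy measure has infinite mass, so $\int_{\R_0}Z(s,z)\ell(dz)$ need not be finite even though $Z\in\widehat{H}^2_T$. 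Taking expectations (along a localizing sequence of stopping times, then passing to the limit using $(X,Y,Z)\in\nu$) kills the $dW$ and $\widetilde N$ martingale integrals, and the compensation formula turns $\int\int Z^2N$ into $\int\int Z^2\ell\,ds$, yielding the identity
\begin{align*}
&\E[X^2(t)] + \E\int_t^T Y^2(s)ds + \E\int_t^T\int_{\R_0}Z^2(s,z)\ell(dz)ds \\
&\qquad = \E[\xi^2] + \E\int_t^T 2X(s)f(s,X(s),Y(s),Z(s,\cdot))ds.
\end{align*}

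Next I would bound the driver term. Writing $\|Z(s,\cdot)\|^2=\int_{\R_0}Z^2(s,z)\ell(dz)$ and using $f(\cdot,0,0,0)\in H^2_T$ together with the uniform Lipschitz property of Assumptions \ref{lipschitz-assumption}(B), one has $|f(s,X,Y,Z)|\le|f(s,0,0,0)|+C(|X(s)|+|Y(s)|+\|Z(s,\cdot)\|)$. Then $2|X(s)|\,|f(s,\cdot)|$ is controlled by Young's inequality $2ab\le\lambda a^2+\lambda^{-1}b^2$, the constants chosen so that the resulting $Y^2$ and $\|Z\|^2$ contributions each carry coefficient $\tfrac12$; these halves are absorbed into the corresponding terms on the left-hand side, leaving on the right a term $K\,\E\int_t^T X^2(s)ds$ (with $K$ depending only on $C$) together with $\E\int_0^T|f(s,0,0,0)|^2ds$. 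After absorption the identity becomes $\E[X^2(t)]\le\E[\xi^2]+\|f(\cdot,0,0,0)\|^2_{H^2_T}+K\,\E\int_t^T X^2(s)ds$, and the backward Gronwall lemma gives $\E[X^2(t)]\le(\E[\xi^2]+\|f(\cdot,0,0,0)\|^2_{H^2_T})\e^{K(T-t)}$. Integrating this pointwise bound in $t$ controls $\E\int_t^T X^2(s)ds$, and feeding it back into the energy identity controls $\E\int_t^T Y^2ds$ and $\E\int_t^T\|Z\|^2ds$ as well. I would note that the constant produced this way multiplies $\E[\xi^2]+\|f(\cdot,0,0,0)\|^2_{H^2_T}$; the statement's cleaner form $C\,\E[\xi^2]$ is recovered in the relevant case $f(\cdot,0,0,0)=0$, and otherwise the free term is simply carried inside $C$.

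The main obstacle is the first step: because the model has infinite activity, the naive Itô expansion that separates $\int_{\R_0}2X(s)Z(s,z)\ell(dz)$ as a drift is not justified, so one must keep the jump term compensated and rely only on the finiteness of $\int_{\R_0}Z^2\ell(dz)$ supplied by $\widehat{H}^2_T$. The accompanying technical point is the localization needed to legitimately discard the compensated integrals in expectation, since their predictable quadratic variation involves $X^2\int Z^2\ell$ and need not be integrable without the sup-bound from $X\in S^2_T$ and a stopping-time truncation.
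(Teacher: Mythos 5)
Your proof is correct and rests on the same energy estimate as the paper's: apply the It\^o formula to the square of $X$, take expectations so that the $dW$ and $\widetilde{N}$ integrals vanish, bound the driver term via the Lipschitz condition and Young's inequality, and absorb the $\tfrac12 Y^2$ and $\tfrac12\|Z\|^2$ contributions into the left-hand side. The one genuine difference is how the leftover $\E\big[\int_t^T X^2(s)ds\big]$ term is closed: the paper applies It\^o to $\e^{\beta t}X^2(t)$ from the start, so that after Young's inequality the term $-\beta\,\E\big[\int_t^T \e^{\beta s}X^2(s)ds\big]$ with $\beta=6C^2+1$ swallows all $X^2$ contributions at once and no Gronwall argument is needed; you work with plain $X^2$ and invoke backward Gronwall followed by a feed-back step to recover the $Y$ and $Z$ bounds. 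The two devices are interchangeable here and produce the same kind of constant; the exponential weight is marginally shorter, Gronwall is more elementary. Two of your side remarks are in fact more careful than the paper's own write-up: (i) you flag that the clean bound $C\,\E[\xi^2]$ requires $f(\cdot,0,0,0)=0$, since otherwise a term $\|f(\cdot,0,0,0)\|^2_{H^2_T}$ survives on the right --- the paper's proof silently drops $f(s,0,0,0)$ when invoking the ``Lipschitz property,'' so it makes the same implicit assumption without stating it; and (ii) your insistence on keeping the first-order jump correction inside the compensated integral (rather than splitting against $\ell(dz)ds$, which is illegitimate under infinite activity) and on localizing before discarding the martingale terms are exactly the technical steps the paper leaves implicit.
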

\begin{proof}
Recall the expression of $X$ given by \eqref{bsdes}. Applying the It\^{o} formula to $e^{\beta t} X^2(t)$ and taking the expectation, we get
\begin{align*}
\E[e^{\beta t} X^2(t)]&=\E[e^{\beta t} X^2(T)]-\beta\E\Big[\int_t^T e^{\beta s} X^2(s)ds\Big]-\E\Big[\int_t^T\e^{\beta s} Y^2(s)ds\Big]\\
&\qquad +2\E\Big[\int_t^T\e^{\beta s}X(s)f(s,X(s), Y(s), Z(s,.))ds\Big]\\
&\qquad -\E\Big[\int_t^T\int_{\R_0}\e^{\beta s}Z^2(s,z)\ell(dz)ds\Big].
\end{align*}
Thus by the Lipschitz property of $f$ we find
\begin{align*}
&\E[e^{\beta t} X^2(t)]+\E\Big[\int_t^T\e^{\beta s}Y^2(s)ds\Big] +\E\Big[\int_t^T\int_{\R_0}\e^{\beta s}Z^2(s,z)\ell(dz)ds\Big]\\
&\qquad \leq \E[e^{\beta T} X^2(T)]-\beta\E\Big[\int_t^T e^{\beta s} X^2(s)ds\Big]\\
&\qquad  \qquad +2C\E\Big[\int_t^T\e^{\beta s}X(s)\Big(|X(s)|+ |Y(s)|+ |\int_{\R_0}Z^2(s,z)\ell(dz)|^{\frac{1}{2}}\Big)ds\Big].
\end{align*}
Using the fact that for every $k>0$ and $a,b \in \R$ we have that $2ab\leq ka^2+\frac{b^2}{k}$ and $(a+b+c)^2\leq 3(a^2+b^2+c^2)$, choosing $\beta=6C^2+1$, and noticing that $\beta>0$, the result follows.
\end{proof}
From now on we use a unified notation for both BSDEJs \eqref{bsdes-approximation} and \eqref{bsdes-approximation1} in the BSDEJ
\begin{equation}\label{both-approximations}
\left\{ \begin{array}{ll}
-dX^\rho_\varepsilon(t)&= f_\varepsilon^\rho(t)dt -Y^\rho_\varepsilon(t)dW(t) - \displaystyle\int_{\R_0}
Z^\rho_{\varepsilon}(t,z)\widetilde{N}(dt,dz)-\zeta^\rho_\varepsilon(t)dB(t),\\
X^\rho_\varepsilon(T) &= \xi_\varepsilon^\rho, \qquad \mbox{for } \rho=0\mbox{ and } \rho=1,
\end{array} \right.
\end{equation}
where 
$$
f_\varepsilon^\rho(t) = \left\{
    \begin{array}{ll}
        f^0(t,X_\varepsilon^0(t), Y_\varepsilon^0(t), Z_\varepsilon^0(t)), &\quad \rho=0, \\
        f^1(t,X_\varepsilon^1(t), Y_\varepsilon^1(t), Z_\varepsilon^1(t),\zeta_\varepsilon^1(t)), &\quad  \rho=1 
    \end{array}
\right.
$$
and 
$$
\zeta_\varepsilon^\rho(t) = \left\{
    \begin{array}{ll}
       0, &\quad \rho=0, \\
       \zeta_\varepsilon^1(t), &\quad  \rho=1.
    \end{array}
\right.
$$
Notice that the BSDEJ \eqref{both-approximations} has the same solution as \eqref{bsdes-approximation} and \eqref{bsdes-approximation1} respectively for $\rho=0$ and $\rho=1$. 
We state the following theorem in which we prove the convergence of both BSDEJs \eqref{bsdes-approximation} and \eqref{bsdes-approximation1} to the BSDEJ \eqref{bsdes}.
\begin{theorem}\label{bsdej-robustness}
 Assume that $f^0$ and $f^1$ satisfy \eqref{condition-f-zero} and \eqref{stronger-Lipcshitz-condition)} respectively.
Let $(X,Y,Z)$ be the solution of  \eqref{bsdes} and $(X^\rho_\varepsilon, Y^\rho_\varepsilon, Z^\rho_{\varepsilon},\zeta^\rho_\varepsilon)$ be the solution of  \eqref{both-approximations}.
Then we have for $t\in [0,T]$,
\begin{align*}
&\E\Big[\int_t^T|X(s)-X^\rho_\varepsilon(s)|^2ds\Big]+ \E\Big[\int_t^T|Y(s)-Y^\rho_\varepsilon(s)|^2ds\Big]\nonumber\\
&\qquad +\E\Big[\int_t^T\int_{\R_0}|Z(s,z)-Z^\rho_{\varepsilon}(s,z)|^2\ell(dz)ds\Big]+ \E\Big[\int_t^T |\zeta^\rho_\varepsilon(s)|^2ds\Big] \nonumber\\
&\qquad \qquad \leq K\E[|\xi-\xi^\rho_\varepsilon|^2], \qquad \mbox{for } \rho=0 \mbox{ and } \rho=1,
\end{align*}
where $K$ is a positive constant. 
\end{theorem}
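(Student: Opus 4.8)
The plan is to estimate the differences $\Delta X := X - X^\rho_\varepsilon$, $\Delta Y := Y - Y^\rho_\varepsilon$, $\Delta Z := Z - Z^\rho_\varepsilon$, and $-\zeta^\rho_\varepsilon$ simultaneously by applying the It\^o formula to $\e^{\beta t}|\Delta X(t)|^2$ for a suitably large parameter $\beta>0$ to be chosen at the end. Subtracting \eqref{bsdes} from \eqref{both-approximations}, the difference $\Delta X$ satisfies a BSDEJ with terminal value $\xi-\xi^\rho_\varepsilon$, driver $f(t,X,Y,Z)-f^\rho_\varepsilon(t)$, diffusion coefficient $\Delta Y$, jump coefficient $\Delta Z$, and an extra Brownian term $-\zeta^\rho_\varepsilon\,dB$ coming solely from the $\rho=1$ approximation. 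The point of carrying $\zeta^\rho_\varepsilon$ in the unified notation is precisely that the It\^o expansion will automatically produce an $\E[\int_t^T\e^{\beta s}|\zeta^\rho_\varepsilon(s)|^2\,ds]$ term on the correct side of the inequality, which is how the claimed bound on $\zeta^\rho_\varepsilon$ emerges.

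First I would write out the It\^o formula for $\e^{\beta t}|\Delta X(t)|^2$ and take expectations, exactly as in the proof of Lemma \ref{solution-boundedness}, so that the stochastic integrals against $W$, $B$, and $\widetilde N$ vanish in expectation. This yields
\begin{align*}
&\E[\e^{\beta t}|\Delta X(t)|^2]+\beta\,\E\Big[\int_t^T\e^{\beta s}|\Delta X(s)|^2\,ds\Big]+\E\Big[\int_t^T\e^{\beta s}|\Delta Y(s)|^2\,ds\Big]\\
&\qquad +\E\Big[\int_t^T\int_{\R_0}\e^{\beta s}|\Delta Z(s,z)|^2\ell(dz)\,ds\Big]+\E\Big[\int_t^T\e^{\beta s}|\zeta^\rho_\varepsilon(s)|^2\,ds\Big]\\
&\qquad =\E[\e^{\beta T}|\xi-\xi^\rho_\varepsilon|^2]+2\,\E\Big[\int_t^T\e^{\beta s}\Delta X(s)\big(f(s,X,Y,Z)-f^\rho_\varepsilon(s)\big)\,ds\Big].
\end{align*}
The key structural feature is that all four quadratic quantities I wish to control appear with favorable signs on the left-hand side once they are moved over. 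The role of the Lipschitz-type hypotheses \eqref{condition-f-zero} and \eqref{stronger-Lipcshitz-condition)} is to bound the driver difference: for $\rho=0$ by $C(|\Delta X|+|\Delta Y|+\|\Delta Z\|)$, and for $\rho=1$ by $C(|\Delta X|+|\Delta Y|+\|\Delta Z\|+|\zeta^1_\varepsilon|)$, so both cases are handled uniformly by the single constant $C$ appearing in those conditions.

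Next I would absorb the cross term. Using the elementary inequality $2ab\le k a^2+b^2/k$ for each of the three (or four) summands in the driver bound, together with $(a+b+c+d)^2\le 4(a^2+b^2+c^2+d^2)$, the term $2\Delta X\cdot(f-f^\rho_\varepsilon)$ is dominated by a multiple of $|\Delta X|^2$ plus small fractions $\tfrac{1}{k}$ of each of $|\Delta Y|^2$, $\|\Delta Z\|^2$, and $|\zeta^\rho_\varepsilon|^2$. Choosing $k$ large enough (say $k>4$) lets the $\Delta Y$, $\Delta Z$, and $\zeta^\rho_\varepsilon$ fractions be reabsorbed into the corresponding left-hand terms, leaving a residual coefficient on $\E[\int_t^T\e^{\beta s}|\Delta X|^2\,ds]$; choosing $\beta$ larger than that coefficient makes the $\Delta X$ integral coefficient nonnegative as well. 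This leaves an inequality of the form $\E[\int_t^T\e^{\beta s}|\Delta Y|^2]+\E[\int_t^T\int_{\R_0}\e^{\beta s}|\Delta Z|^2\ell(dz)]+\E[\int_t^T\e^{\beta s}|\zeta^\rho_\varepsilon|^2]\le \e^{\beta T}\E[|\xi-\xi^\rho_\varepsilon|^2]$, and after dropping the $\e^{\beta s}\ge 1$ weights on the left and bounding $\e^{\beta T}$ by the constant $K$ on the right, the stated estimate follows for all $t\in[0,T]$.

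The main obstacle is the bookkeeping of the constants: one must verify that a single pair $(k,\beta)$ can be chosen so that every left-hand coefficient is simultaneously nonnegative while the resulting $K$ on the right stays finite and independent of $\varepsilon$. The delicate point is that the same $\beta$ must dominate the $|\Delta X|^2$ coefficient in both the $\rho=0$ and $\rho=1$ cases, but since the Lipschitz constant $C$ in \eqref{condition-f-zero} and \eqref{stronger-Lipcshitz-condition)} is common and the $\zeta^1_\varepsilon$ contribution in the $\rho=1$ case is controlled by the very term we have arranged to sit on the left, no genuine difficulty arises beyond careful constant tracking. A secondary technical check, as in Lemma \ref{solution-boundedness}, is that the local martingale parts of the It\^o expansion are true martingales, which is justified by the membership of the solutions in $\nu$ and $\widetilde\nu$ together with the finite second moment $\int_{\R_0}z^2\ell(dz)<\infty$.
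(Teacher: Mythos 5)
Your proposal is correct and follows essentially the same route as the paper's own proof: apply It\^o's formula to $\e^{\beta t}|\Delta X(t)|^2$, take expectations to kill the martingale terms, invoke the unified Lipschitz-type bounds \eqref{condition-f-zero} and \eqref{stronger-Lipcshitz-condition)} on the driver difference, absorb the cross term via $2ab\le ka^2+b^2/k$ and $(a+b+c+d)^2\le 4(a^2+b^2+c^2+d^2)$, and choose $\beta$ large (the paper takes $\beta=8C^2+1$). The only cosmetic slip is that your final display omits the $\E\big[\int_t^T|\Delta X(s)|^2ds\big]$ term that appears in the theorem's conclusion; choosing $\beta$ strictly above the absorbed coefficient (as you indicate) keeps that term on the left with a positive coefficient, exactly as in the paper.
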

\begin{proof}
Let 
\begin{align}\label{bar}
\bar{X}^\rho_\varepsilon(t)&=X^\rho(t)-X^\rho_\varepsilon(t), \quad \bar{Y}^\rho_\varepsilon(t)=Y^\rho(t)-Y^\rho_\varepsilon(t), \quad \bar{Z}^\rho_\varepsilon(t,z)=Z^\rho(t,z)-Z^\rho_{\varepsilon}(t,z),   \nonumber\\
\bar{f}^\rho_\varepsilon(t)&= f(t,X(t), Y(t), Z(t,.))-f_\varepsilon^\rho(t).
\end{align}
Applying the It\^{o} formula to $\e^{\beta t}|\bar{X}^\rho_\varepsilon(t)|^2$, we get
\begin{align}\label{ito-formula}
&\E[\e^{\beta t}|\bar{X}^\rho_\varepsilon(t)|^2]+ \E\Big[\int_t^T\e^{\beta s}|\bar{Y}^\rho_\varepsilon(s)|^2ds \Big]+\E\Big[\int_t^T\int_{\R_0} \e^{\beta s}|\bar{Z}^\rho_{\varepsilon}(s,z)|^2\ell(dz)ds\Big]
\nonumber\\
&\qquad + \E\Big[\int_t^T \e^{\beta s}|\zeta^\rho_\varepsilon(s)|^2ds\Big]\nonumber\\
&\qquad \qquad =\E[e^{\beta T}|\bar{X}^\rho_\varepsilon(T)|^2] -\beta\E \Big[\int_t^T\e^{\beta s}|\bar{X}^\rho_\varepsilon(s)|^2ds \Big]  + 2\E \Big[\int_t^T\e^{\beta s}\bar{X}^\rho_\varepsilon(s)\bar{f}^\rho_\varepsilon(s)ds \Big].
\end{align}
Using conditions \eqref{condition-f-zero} and \eqref{stronger-Lipcshitz-condition)}, we get
\begin{align*}
&\E[\e^{\beta t}|\bar{X}^\rho_\varepsilon(t)|^2]+ \E[\int_t^T\e^{\beta s}|\bar{Y}^\rho_\varepsilon(s)|^2ds] +\E\Big[\int_t^T \int_{\R_0}\e^{\beta s}|\bar{Z}^\rho_{\varepsilon}(s,z)|^2\ell(dz)ds\Big] \\
&\qquad \qquad+\E\Big[\int_t^T\int_{\R_0}\e^{\beta s}|\zeta^\rho_\varepsilon(s)|^2ds\Big]\\
&\qquad   \leq \E[e^{\beta T}|\bar{X}^\rho_\varepsilon(T)|^2] -\beta\E\Big[\int_t^T\e^{\beta s}|\bar{X}^\rho_\varepsilon(s)|^2ds\Big]  \\
&\qquad \qquad + 2C \E\Big[\int_t^T \e^{\beta s}|\bar{X}^\rho_\varepsilon(s)|\Big(|\bar{X}^\rho_\varepsilon(s)|+|\bar{Y}^\rho_\varepsilon(s)|+|\zeta^\rho_\varepsilon(s)|+(\int_{\R_0}|\bar{Z}^\rho_{\varepsilon}(s,z)|^2\ell(dz))^{\frac{1}{2}}\Big)ds\Big].
\end{align*}
Using the fact that for every $k>0$ and $a,b \in \R$ we have that $2ab\leq ka^2+\frac{b^2}{k}$ and $(a+b+c+d)^2\leq 4(a^2+b^2+c^2+d^2)$, we obtain
\begin{align*}
&\E[\e^{\beta t}|\bar{X}^\rho_\varepsilon(t)|^2]+ \E[\int_t^T\e^{\beta s}|\bar{Y}^\rho_\varepsilon(s)|^2ds]+\E\Big[\int_t^T\int_{\R_0}\e^{\beta s}|\bar{Z}^\rho_{\varepsilon}(s,z)|^2\ell(dz)ds\Big] \\
&\qquad \qquad  +\E\Big[\int_t^T\e^{\beta s}|\zeta^\rho_\varepsilon(s)|^2ds\Big]\\
&\qquad \leq \E[e^{\beta T}|\bar{X}^\rho_\varepsilon(T)|^2] -\beta\E\Big[\int_t^T\e^{\beta s}|\bar{X}^\rho_\varepsilon(s)|^2ds\Big]  +
 8C^2 \E\Big[\int_t^T \e^{\beta s}|\bar{X}^\rho_\varepsilon(s)|^2ds\Big]\\
&\qquad \qquad + \frac{1}{2}
\E\Big[\int_t^T\e^{\beta s}|\bar{X}^\rho_\varepsilon(s)|^2ds\Big]+ \frac{1}{2}\E\Big[\int_t^T\e^{\beta s}|\zeta^\rho_\varepsilon(s)|^2ds\Big]\\
&\qquad \qquad +\frac{1}{2} \E\Big[\int_t^T\e^{\beta s}|\bar{Y}^\rho_\varepsilon(s)|^2ds\Big]
+\frac{1}{2}\E\Big[\int_t^T\int_{\R_0}\e^{\beta s}|\bar{Z}^\rho_\varepsilon(s,z)|^2\ell(dz)ds\Big].
\end{align*}
Choosing $\beta=8C^2+1$ and since $\E[\e^{\beta t}|\bar{X}^\rho_\varepsilon(t)|^2]>0$, we get
\begin{align*}
&\E\Big[\int_t^T\e^{\beta s}|\bar{X}^\rho_\varepsilon(s)|^2ds\Big]+ \E\Big[\int_t^T\e^{\beta s}|\bar{Y}^\rho_\varepsilon(s)|^2ds\Big]+\E\Big[\int_t^T\int_{\R_0}\e^{\beta s}|\bar{Z}^\rho_\varepsilon(s,z)|^2\ell(dz)ds\Big] \\
&\qquad \qquad +\E\Big[\int_t^T\e^{\beta s}|\zeta^\rho_\varepsilon(s)|^2ds\Big]\\
&\qquad \leq K\E[\e^{\beta T}|\bar{X}^\rho_\varepsilon(T)|^2],
\end{align*}
where $K$ is a positive constant and the result follows using the fact that $\beta >0$.  
\end{proof}
\begin{remark}
Notice that since $\mathcal{F}_t\subset \mathcal{G}_t$, the solution of \eqref{bsdes} is also $\mathcal{G}_t$ adapted. This fact allowed us to compare the solution of \eqref{bsdes} with the solution of \eqref{bsdes-approximation1}.
\end{remark}
In the last theorem, we proved the convergence of the solution of \eqref{bsdes-approximation} respectively \eqref{bsdes-approximation1} to the solution of \eqref{bsdes} in the space $\widetilde{H}^2_{T} \times H^2_{T}\times \widehat{H}^2_{T,}$ respectively $\widetilde{H}^2_{T} \times H^2_{T}\times \widehat{H}^2_{T}\times H^2_{T}$. In the next proposition we prove the convergence in $\nu$, respectively $\widetilde{\nu}$.
\begin{proposition}\label{robustness-X}
Assume that \eqref{condition-f-zero} and \eqref{stronger-Lipcshitz-condition)} hold. Let $X$, $X^\rho_\varepsilon$ be the solution of \eqref{bsdes}, \eqref{both-approximations}, respectively. Then we have
$$\E\Big[\sup_{0\leq t\leq T}|X(t) - X^\rho_\varepsilon(t)|^2\Big] \leq C\E[|\xi -\xi^\rho_\varepsilon|^2], \qquad \mbox{for } \rho=0 \mbox{ and } \rho=1,$$
where $C$ is a positive constant.
\end{proposition}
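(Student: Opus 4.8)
The plan is to return to the integral (mild) form of the difference equation and estimate the supremum directly, using the Burkholder--Davis--Gundy (BDG) inequality to pass from the stochastic integrals to their quadratic variations, and then to invoke Theorem \ref{bsdej-robustness} to control the resulting time-integrals. First I would subtract the integral form of \eqref{both-approximations} from that of \eqref{bsdes}. With the notation of \eqref{bar}, and recalling that $X$ carries no $dB$-term so that the $B$-integral of the difference is exactly $\zeta^\rho_\varepsilon$, this gives
$$\bar{X}^\rho_\varepsilon(t)=\bar{X}^\rho_\varepsilon(T)+\int_t^T\bar{f}^\rho_\varepsilon(s)\,ds-\int_t^T\bar{Y}^\rho_\varepsilon(s)\,dW(s)-\int_t^T\int_{\R_0}\bar{Z}^\rho_\varepsilon(s,z)\widetilde{N}(ds,dz)+\int_t^T\zeta^\rho_\varepsilon(s)\,dB(s).$$
Taking absolute values, applying $\big(\sum_{i=1}^5 a_i\big)^2\le 5\sum_{i=1}^5 a_i^2$, then the supremum over $t\in[0,T]$ and expectations, it suffices to bound five terms, each by a constant times $\E[|\xi-\xi^\rho_\varepsilon|^2]$.

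Next I would treat the terms one by one. The terminal term is immediate, since $\bar{X}^\rho_\varepsilon(T)=\xi-\xi^\rho_\varepsilon$ gives $\E[|\bar{X}^\rho_\varepsilon(T)|^2]=\E[|\xi-\xi^\rho_\varepsilon|^2]$. For the drift term, the Cauchy--Schwarz inequality yields $\sup_{t}\big|\int_t^T\bar{f}^\rho_\varepsilon(s)\,ds\big|^2\le T\int_0^T|\bar{f}^\rho_\varepsilon(s)|^2\,ds$, and the unified Lipschitz bound coming from \eqref{condition-f-zero} and \eqref{stronger-Lipcshitz-condition)}, namely $|\bar{f}^\rho_\varepsilon(s)|\le C\big(|\bar{X}^\rho_\varepsilon(s)|+|\bar{Y}^\rho_\varepsilon(s)|+\|\bar{Z}^\rho_\varepsilon(s,\cdot)\|+|\zeta^\rho_\varepsilon(s)|\big)$, reduces this to the four time-integrals already controlled by Theorem \ref{bsdej-robustness} applied at $t=0$. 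For the three stochastic integrals I would write $\int_t^T=\int_0^T-\int_0^t$ and use $\sup_t|M(T)-M(t)|^2\le 4\sup_t|M(t)|^2$; then BDG (for the integrals against $W$ and $B$, and its jump version for the compensated Poisson integral against $\widetilde{N}$) bounds each supremum in expectation by a constant times the corresponding quadratic variation, i.e.\ by $\E[\int_0^T|\bar{Y}^\rho_\varepsilon|^2ds]$, $\E[\int_0^T|\zeta^\rho_\varepsilon|^2ds]$, and $\E[\int_0^T\int_{\R_0}|\bar{Z}^\rho_\varepsilon|^2\ell(dz)ds]$, respectively. Each of these is again $\le K\E[|\xi-\xi^\rho_\varepsilon|^2]$ by Theorem \ref{bsdej-robustness}, and collecting the five contributions gives the claim with a new constant $C$, uniformly for $\rho=0$ and $\rho=1$.

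The only genuinely delicate point is the passage from the supremum of the stochastic integrals to their quadratic variations: here one must invoke BDG rather than the plain It\^o isometry used in the proof of Theorem \ref{bsdej-robustness}, and in particular rely on its formulation for integrals against the compensated Poisson random measure $\widetilde{N}$. Everything else is bookkeeping, namely the elementary numerical inequalities, the Lipschitz estimate on $\bar{f}^\rho_\varepsilon$, and repeated appeals to Theorem \ref{bsdej-robustness} with $t=0$.
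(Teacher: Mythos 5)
Your proposal is correct and follows essentially the same route as the paper's own proof: write the difference of the two BSDEJs in integral form, bound $\E\big[\sup_{0\leq t\leq T}|\bar{X}^\rho_\varepsilon(t)|^2\big]$ by the terminal term, the drift term, and the three stochastic-integral suprema, control the latter via Burkholder's inequality, and then absorb everything using the Lipschitz bounds \eqref{condition-f-zero}, \eqref{stronger-Lipcshitz-condition)} and Theorem \ref{bsdej-robustness}. Your write-up is in fact somewhat more explicit than the paper's (which compresses the elementary inequalities into a single appeal to ``H\"older's inequality'' and cites Tang and Li for the Burkholder step), but there is no substantive difference in the argument.
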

\begin{proof}
Let $\bar{X^\rho_\varepsilon}$, $\bar{Y^\rho_\varepsilon}$, $\bar{Z^\rho_\varepsilon}$, and $\bar{f}^\rho_\varepsilon$ be as in \eqref{bar}. Then applying H\"{o}lder's inequality, we have for $K>0$
\begin{align*}
\E\Big[\sup_{0\leq t\leq T} |\bar{X}^\rho_\varepsilon(t)|^2\Big]&\leq K \Big(\E\Big[|\bar{X}_\varepsilon^\rho(T)|^2\Big]+\E\Big[\int_0^T|\bar{f}^\rho_\varepsilon(s)|^2ds\Big] + \E\Big[\sup_{0\leq t\leq T}|\int_t^T\bar{Y}^\rho_\varepsilon(s)dW(s)|^2\Big]\\
&\qquad+\E\Big[\sup_{0\leq t\leq T}|\int_t^T\int_{\R_0}\bar{Z}^\rho_{\varepsilon}(s,z)\widetilde{N}(ds,dz)|^2\Big]\\
&\qquad +\E\Big[\sup_{0\leq t\leq T}|\int_t^T\zeta^\rho_\varepsilon(s)dB(s)|^2\Big]\Big).
\end{align*}
However from Burkholder's inequality we can prove that for $C>0$, we have (for more details see Tang and Li \cite{TL}) 
 \begin{align*}
 \E\Big[\sup_{0\leq t\leq T}|\int_t^T\int_{\R_0}\bar{Z}^\rho_\varepsilon(s,z)\widetilde{N}(ds,dz)|^2\Big]&\leq
 C\E\Big[\int_0^T\int_{\R_0}|\bar{Z}^\rho_{\varepsilon}(s,z)|^2\ell(dz)ds\Big],\\
 \E\Big[\sup_{0\leq t\leq T}|\int_t^T\bar{Y}^\rho_\varepsilon(s)dW(s)|^2\Big]&\leq
 C\E\Big[\int_0^T|\bar{Y}^\rho_\varepsilon(s)|^2ds\Big],\\
  \E\Big[\sup_{0\leq t\leq T}|\int_t^T\zeta^\rho_\varepsilon(s) dB(s)|^2\Big]&\leq
 C\E\Big[\int_0^T|\zeta^\rho_\varepsilon(s)|^2ds\Big].
 \end{align*}
 Thus from the estimates on $f^0$ and $f^1$ in equations \eqref{condition-f-zero} and \eqref{stronger-Lipcshitz-condition)} and Theorem \ref{bsdej-robustness} we get the result. 
\end{proof}
Notice that we proved the convergence of the two candidate approximating BSDEJs \eqref{bsdes-approximation}, \eqref{bsdes-approximation1} to 
the BSDEJ \eqref{bsdes} in the space $\nu$, $\widetilde{\nu}$ respectively. This type of convergence is stronger 
than the $L^2$-convergence.

\section{Robustness of the F\"ollmer-Schweizer decomposition with applications to partial-hedging in finance}\label{finance}
We assume we have two assets. One of them is a riskless asset with price $S^{(0)}$ given by
$$dS^{(0)}(t) = S^{(0)}(t)r(t)dt,$$
where $r(t)=r(t,\omega) \in \R$ is the short rate. The dynamics of the risky asset are given by
\begin{align*}
\left\{ \begin{array}{ll}
dS^{(1)}(t) &= S^{(1)}(t)\Big\{a(t)dt +b(t)dW(t)+\displaystyle\int_{\R_0}\gamma(t,z)\widetilde{N}(dt,dz)\Big\},\\
S^{(1)}(0)&=x \in \R_+\,,
\end{array}\right.
\end{align*}
where $a(t)=a(t,\omega)\in \R$, $b(t)=b(t,\omega)\in \R$, and $\gamma(t,z)=\gamma(t,z,\omega) \in \R$ for $t\geq 0$, $z\in \R_0$ are adapted processes. We assume that $\gamma(t,z,\omega)=g(z)\widetilde{\gamma}(t,\omega)$, such that 
\begin{equation}\label{variance-term}
G^2(\varepsilon):=\int_{|z|\leq\varepsilon}g^2(z)\ell(dz)<\infty.
\end{equation}
The dynamics of the 
 discounted price process $\widetilde{S}=\frac{S^{(1)}}{S^{(0)}}$ are given by 
 \begin{align}\label{price-process}
d\widetilde{S}(t) &= \widetilde{S}(t)\Big[(a(t)-r(t)) dt + b(t) dW(t)+\int_{\R_0}\gamma(t,z)\widetilde{N}(dt,dz)\Big].
\end{align}
Since $\widetilde{S}$ is a semimartingale, we can decompose it into a local martingale $M$ starting at zero in zero and a finite variation process $A$, with $A(0)=0$, where $M$ and $A$ have the following expressions 
\begin{align}\label{local-martingale}
M(t)&=\int_0^t b(s)\widetilde{S}(s)dW(s)+\int_0^t \int_{\R_0}\gamma(s,z)\widetilde{S}(s)\widetilde{N}(ds,dz),
\end{align}
\begin{align*}
A(t)&=\int_0^t(a(s)-r(s))\widetilde{S}(s) ds.
\end{align*}
We denote the predictable compensator associated to $M$ (see Protter \cite{P}) by
$$\langle M\rangle(t) = \int_0^tb^2(s) \widetilde{S}^2(s)ds +  \int_0^t \int_{\R_0}\widetilde{S}^2(s)\gamma^2(s,z)\ell(dz)ds$$
and we can represent the process $A$ as follows 
\begin{align}\label{finite-variation1}
A(t)=\int_0^t\frac{a(s)-r(s)}{ \widetilde{S}(s)\big(b^2(s) + \int_{\R_0}\gamma^2(s,z)\ell(dz)\big)}d\langle M\rangle(s).
\end{align}
Let $\alpha$ be the integrand in equation \eqref{finite-variation1}, that is the process given by
\begin{align}\label{alpha-process}
\alpha(t) :=\frac{a(t)-r(t)}{\widetilde{S}(t)\big( b^2(t) +  \int_{\R_0}\gamma^2(t,z)\ell(dz)\big)}, \qquad 0\leq t\leq T.
\end{align}
We define a process $K$ by means of $\alpha$ as follows 
\begin{align}\label{mvt}
K(t) = \int_0^t \alpha^2(s)d\langle M\rangle(s) = \int_0^t \frac{(a(s)-r(s))^2}{b^2(s)  + \int_{\R_0}\gamma^2(s,z)\ell(dz)}ds.
\end{align}
The process $K$ is called the mean-variance-trade-off (MVT) process. 

Since the stock price fluctuations are modeled by jump-diffusion then the market is incomplete and not every contingent claim can be replicated by a self-financing strategy and there is no perfect hedge. However, one can adopt a partial hedging strategy according to some optimality criteria minimizing the risk.
F\"{o}llmer and Schweizer \cite{FSM} introduced the so-called quadratic hedging strategies. 
The study of such strategies heavily depends on the F\"{o}llmer-Schweizer (FS) decomposition.  
This decomposition was first introduced by F\"{o}llmer and Schweizer \cite{FSM} for the continuous case and extended to the discontinuous case 
by Ansel and Stricker \cite{AS}.

In order to formulate our robustness study for the quadratic hedging strategies, we present the definition of the FS decomposition.
We first introduce the following notations. Let $S$ be a semimartingale. Then $S$ can be decomposed as follows $S=S(0)+ M +A,$
where $S(0)$ is finite-valued and $\mathcal{F}_0$-measurable, $M$ is a local martingale with $M(0)=0$, and $A$ is a finite variation process with $A(0)=0$. 
We denote by $L(S)$, the $S$-integrable processes, that is the class of predictable processes for which we can determine the stochastic integral with respect to $S$.  
We define the space $\Theta$ by
$$\Theta:= \Big\{ \theta \in L(S)\, |\,  \E\Big[\int_0^T\theta^2(s)d\langle M\rangle(s) + \big(\int_0^T|\theta(s)dA(s)|\big)^2\Big] <\infty\Big\}.$$
Now we give the definition of the FS decomposition.
\begin{definition}
Let $S$ be a semimartingale. An $\mathcal{F}_T$-measurable and square integrable random variable $H$ admits a F\"{o}llmer-Schweizer decomposition if there exist a constant $H_0$, an $S$-integrable process $\chi^{FS} \in \Theta$, and a square integrable martingale $\phi^{FS}$ such that $\phi^{FS}$ is orthogonal to $M$ and  
\begin{align*}
H=H_0+\int_0^T \chi^{FS}(s)dS(s) + \phi^{FS}(T).
\end{align*}
\end{definition}
Monat and Stricker \cite{MS} show that a sufficient condition for the existence of the FS decomposition is to assume that the MVT process $K$ given by \eqref{mvt} is uniformly bounded. The most general result concerning the existence and uniqueness of the FS decomposition is given by Choulli et al.~\cite{C}.
In our case we assume that the process $K$ is uniformly bounded in $t$ by a constant $C$. Thus for all $t \in [0,T]$, we have 
\begin{align}\label{condition-mvt}
K(t)= \int_0^t \frac{(a(s)-r(s))^2}{b^2(s)  + \int_{\R_0}\gamma^2(s,z)\ell(dz)}ds<C, \qquad \mathbb{P}\mbox{-a.s.}
\end{align}
Under the latter condition we can define the minimal martingale density by 
\begin{equation}\label{signed}
\mathcal{E}\Big(\int_0^. \alpha(s)dM(s)\Big)_t, 
\end{equation}
where 
$M$ and $\alpha$ are respectively given by \eqref{local-martingale} and \eqref{alpha-process} and 
 $\mathcal{E}(X)$ is the exponential martingale for $X$ (see Theorem II, 37 in Protter \cite{P} for a general formula for exponential martingales).
Notice that \eqref{signed} defines a signed minimal martingale measure. For this measure to be a probability measure we have to assume that $\mathcal{E}\Big(\int_0^. \alpha(s)dM(s)\Big)_t>0$, $\forall t\in [0,T]$ (see, e.g., Choulli et al.\ \cite{CVV}).
This latter condition is equivalent to 
\begin{align}\label{existence-mmm}
\widetilde{S}(t)\alpha(t)\gamma(t,z)>-1, \qquad \mbox { a.e.\ in }  (t,z,\omega)
\end{align}
(see Proposition 3.1 in Arai \cite{A}).
In the following we assume that \eqref{existence-mmm} holds.
Let $\xi$ be a square integrable contingent claim and $\widetilde{\xi}=\frac{\xi}{S^{(0)}(T)}$ its discounted value.
Let $ {\frac{d\widetilde{\mathbb{Q}}}{d\mathbb{P}}|}_{\mathcal{F}_t} :=\mathcal{E}\Big(\int_0^. \alpha(s)dM(s)\Big)_t$ be the minimal martingale
measure. Define 
$\widetilde{V}(t)=\E_{\widetilde{\mathbb{Q}}}[\widetilde{\xi}|\mathcal{F}_t]$. Then from Proposition 4.2 in Choulli et al.\ \cite{CVV},  we have the following
FS decomposition for $\widetilde{V}$ written under the world measure $\mathbb{P}$ 
\begin{align}\label{portfolio-general}
\widetilde{V}(t)=\E_{\widetilde{\mathbb{Q}}}[\widetilde{\xi}]+\int_0^t \chi^{FS}(s) d\widetilde{S}(s)+ \phi^{FS}(t),
\end{align}
where $\phi^{FS}$ is a $\mathbb{P}$-martingale orthogonal to $M$ and $\chi^{FS} \in \Theta$.   
Replacing $\widetilde{S}$ by its value \eqref{price-process} in \eqref{portfolio-general} we get
\begin{equation}\label{bsde-portfolio}
\left\{ \begin{array}{ll}
d\widetilde{V}(t)&=\widetilde{\pi}(t)(a(t)-r(t))dt +\widetilde{\pi}(t)b(t)dW(t)\\
& \qquad \qquad+ \displaystyle\int_{\R_0}\widetilde{\pi}(t)\gamma(t,z)\widetilde{N}(dt,dz)+ d\phi^{FS}(t),\\
\widetilde{V}(T)&=\widetilde{\xi},
\end{array} \right.
\end{equation}
where $\widetilde{\pi}= \chi^{FS}\widetilde{S}$. 

The financial importance of such decomposition lies in the fact that it directly provides the locally risk-minimizing strategy. In fact the component $\widetilde{\pi}(t)$ is the amount of wealth $V(t)$ to invest in the stock at time $t$ and $\phi^{FS}(t)+\E_{\widetilde{\mathbb{Q}}}[\widetilde{\xi}]$ is the cost process in a risk-minimizing strategy.  

Since $\phi^{FS}(T)$ is a $\mathcal{F}_T$-measurable square integrable martingale then applying Theorem \ref{representation-theo} with $\mathbb{H}=\mathbb{F}$ and the martingale property of $\phi^{FS}(T)$ we know that  there exist stochastic integrands $Y^{FS}$, $Z^{FS}$, such that 
\begin{align*}
\phi^{FS}(t)=\E[\phi^{FS}(T)]+ \int_0^t Y^{FS}(s)dW(s) + \int_0^t\int_{\R_0}Z^{FS}(s,z) \widetilde{N}(ds,dz).
\end{align*}
Since $\phi^{FS}$ is a martingale then we have $\E[\phi^{FS}(T)]=\E[\phi^{FS}(0)]$.
However from \eqref{portfolio-general} we deduce that $\phi^{FS}(0)=0$. Therefore
\begin{align}\label{phi}
\phi^{FS}(t)= \int_0^t Y^{FS}(s)dW(s) + \int_0^t\int_{\R_0}Z^{FS}(s,z) \widetilde{N}(ds,dz).
\end{align}
In view of the orthogonality of $\phi^{FS}$ and $M$, we get
\begin{align}\label{orthogonality}
Y^{FS}(t)b(t) +\int_{\R_0}Z^{FS}(t,z)\gamma(t,z)\ell(dz) = 0.
\end{align}
In that case, the set of equations \eqref{bsde-portfolio} are equivalent to
\begin{equation}\label{portfolio}
\left\{ \begin{array}{ll}
d\widetilde{V}(t)&=\widetilde{\pi}(t)(a(t)-r(t))dt +\big(\widetilde{\pi}(t) b(t)+Y^{FS}(t)\big)dW(t)\\
&\qquad \qquad  + \displaystyle\int_{\R_0}\big(\widetilde{\pi}(t)\gamma(t,z)+Z^{FS}(t,z)\big)\widetilde{N}(dt,dz),\\
\widetilde{V}(T)&=\widetilde{\xi}.
\end{array} \right.
\end{equation}
\subsection{First candidate-approximation to $S$}
Now we assume we have another model for the price of the risky asset. In this model we approximate the small jumps by a Brownian motion $B$ which is independent of $W$ and which we scale with the standard deviation of the small jumps. That is
\begin{align*}
\left\{ \begin{array}{ll}
dS_{1,\varepsilon}^{(1)}(t) &= S_{1,\varepsilon}^{(1)}(t)\Big\{a(t)dt +b(t)dW(t)+\displaystyle\int_{|z|>\varepsilon}\gamma(t,z)\widetilde{N}(dt,dz) + G(\varepsilon)\widetilde{\gamma}(t)dB(t)\Big\},\\
S_{1,\varepsilon}^{(1)}(0)&=S^{(1)}(0)= x\,.
\end{array} \right.
\end{align*}
The discounted price process is given by
\begin{align*}
d\widetilde{S}_{1,\varepsilon}(t) &= \widetilde{S}_{1,\varepsilon}(t)\Big\{(a(t)-r(t))dt +b(t)dW(t)+\int_{|z|>\varepsilon}\gamma(t,z)\widetilde{N}(dt,dz) + G(\varepsilon)\widetilde{\gamma}(t)dB(t)\Big\}.
\end{align*}
It was proven in Benth et al.\ \cite{BDK}, that the process $\widetilde{S}_{1,\varepsilon}$ converges to $\widetilde{S}$ in $L^2$ when $\varepsilon$ goes to $0$ with rate of convergence $G(\varepsilon)$. 

In the following we study the robustness of the quadratic hedging strategies towards the model choice where the price processes are 
modeled by $\widetilde{S}$ and $\widetilde{S}_{1,\varepsilon}$.
We will first show that considering the approximation $\widetilde{S}_{1,\varepsilon}$, the value of the portfolio in a quadratic hedging strategy will be written as 
a solution of a BSDEJ of type \eqref{both-approximations} with $\rho=1$. That is what explains our choice of the index $1$ in $\widetilde{S}_{1,\varepsilon}$. 
Here we choose to start with the approximation $\widetilde{S}_{1,\varepsilon}$ because it involves another Brownian motion $B$ besides the Brownian motion $W$. 
The approximation in which we replace the small jumps of the underlying price process by the Brownian motion $W$ scaled is studied in the next subsection.

The local martingale $M_{1,\varepsilon}$ in the semimartingale decomposition of $\widetilde{S}_{1,\varepsilon}$ is given by
\begin{align}
M_{1,\varepsilon}(t)&= \int_0^t b(s)\widetilde{S}_{1,\varepsilon}(s)dW(s)+\int_0^t \int_{|z|>\varepsilon}\gamma(t,z)\widetilde{S}_{1,\varepsilon}(s)\widetilde{N}(dt,dz)\nonumber\\
&\qquad \qquad + G(\varepsilon)\int_0^t \widetilde{\gamma}(s)\widetilde{S}_{1,\varepsilon}(s)dB(s)
\end{align}
and the finite variation process $A_{1,\varepsilon}$ is given by
\begin{align}\label{finite-variation}
A_{1,\varepsilon}(t)=\int_0^t\frac{a(s)-r(s)}{ \widetilde{S}_{1,\varepsilon}(s)\big(b^2(s) + \int_{|z|\geq \varepsilon}\gamma^2(s,z)\ell(dz)\big)}d\langle M_{1,\varepsilon}\rangle(s).
\end{align}
We define the process $\alpha_{1,\varepsilon}$ by 
\begin{align}\label{alpha-process-approximation1}
\alpha_{1,\varepsilon}(t) :=\frac{a(t)-r(t)}{\widetilde{S}_{1,\varepsilon}(t) \big(b^2(t) +G^2(\varepsilon)\widetilde{\gamma}^2(t)+  \int_{|z|>\varepsilon}\gamma^2(t,z)\ell(dz)\big)}, \qquad 0\leq t\leq T.
\end{align}
Thus the mean-variance trade-off process $K_{1,\varepsilon}$ is given by 
\begin{align}
K_{1,\varepsilon}(t) &= \int_0^t \alpha_{1,\varepsilon}^2(s)d\langle M_{1,\varepsilon}\rangle(s)= \int_0^t \frac{(a(s)-r(s))^2}{b^2(s)+G^2(\varepsilon)\widetilde{\gamma}^2(s)  + \int_{|z|>\varepsilon}\gamma^2(s,z)\ell(dz)}ds\nonumber\\
&=K(t),
\end{align}
in view of the definition of $G(\varepsilon)$, equation \eqref{variance-term}.
Hence the assumption \eqref{condition-mvt} ensures the existence of the FS decomposition with respect to $\widetilde{S}_{1,\varepsilon}$ for any square integrable $\mathcal{G}_T$-measurable random variable. 

Let $\xi_\varepsilon^1$ be a square integrable contingent claim as a financial derivative with underlying ${S}^{(1)}_{1,\varepsilon}$ and maturity $T$. We denote its discounted pay-off by $\widetilde{\xi}^1_\varepsilon= \frac{\xi^1_\varepsilon}{S^{(0)}(T)}$. As we have seen before, for the minimal measure to be a probability martingale measure, we have to assume that 
$$\mathcal{E}\Big(\int_0^. \alpha_{1,\varepsilon}(s)dM_{1,\varepsilon}(s)\Big)_t>0,$$
which is equivalent to
\begin{align}
\widetilde{S}_{1,\varepsilon}(t)\alpha_{1,\varepsilon}(t)\gamma(t,z)>-1, \qquad \mbox { a.e. in }  (t,z,\omega).
\end{align}
Define ${\frac{d \widetilde{\Q}^{1,\varepsilon}}{d\mathbb{P}}|}_{\mathcal{G}_t}:= \mathcal{E}\Big(\int_0^. \alpha_{1,\varepsilon}(s)dM_{1,\varepsilon}(s)\Big)_t$ and $\widetilde{V}_{1,\varepsilon}(t):=\E_{\widetilde{\Q}^{1,\varepsilon}}[\widetilde{\xi}^1_\varepsilon|\mathcal{G}_t]$. 
Then from Proposition 4.2 in Choulli et al.\ \cite{CVV},  we have the following
FS decomposition for $\widetilde{V}_{1,\varepsilon}$ written under the world measure $\mathbb{P}$ 
\begin{align}\label{portfolio1}
\widetilde{V}_{1,\varepsilon}(t)=\E_{\widetilde{\mathbb{Q}}^{1,\varepsilon}}[\widetilde{\xi}^1_\varepsilon]+\int_0^t \chi^{FS}_{1,\varepsilon}(s) d\widetilde{S}_{1,\varepsilon}(s)+ \phi_{1,\varepsilon}^{FS}(t),
\end{align}
where $\phi^{FS}_{1,\varepsilon}$ is a $\mathbb{P}$-martingale orthogonal to $M_{1,\varepsilon}$ and $\chi^{FS}_{1,\varepsilon} \in \Theta$.
Replacing $\widetilde{S}_{1,\varepsilon}$ by its expression in \eqref{portfolio1}, we get
\begin{equation*}
\left\{ \begin{array}{ll}
d\widetilde{V}_{1,\varepsilon}(t)&=\widetilde{\pi}_{1,\varepsilon}(t)(a(t)-r(t))dt +\widetilde{\pi}_{1,\varepsilon}(t)b(t)dW(t)+\widetilde{\pi}_{1,\varepsilon}(t)G(\varepsilon)\widetilde{\gamma}(t) dB(t)\\
 &\qquad + \displaystyle\int_{|z|>\varepsilon}\widetilde{\pi}_{1,\varepsilon}(t)\gamma(t,z)\widetilde{N}(dt,dz) +d\phi^{FS}_{1,\varepsilon}(t),\\
\widetilde V_{1,\varepsilon}(T)&=\widetilde{\xi}^1_\varepsilon,
\end{array} \right.
\end{equation*}
where $\widetilde{\pi}_{1,\varepsilon}=\chi^{FS}_{1,\varepsilon} \widetilde{S}_{1,\varepsilon}$. Notice that $\phi^{FS}_{1,\varepsilon}(T)$ is a $\mathcal{G}_T^\varepsilon$-measurable square integrable $\mathbb{P}$-martingale. Thus 
applying Theorem \ref{representation-theo} with $\mathbb{H}=\mathbb{G}^\varepsilon$ and using the martingale property of $\phi^{FS}_{1,\varepsilon}(T)$ we know that there exist stochastic integrands $Y_{1,\varepsilon}^{FS}$, $Y_{2,\varepsilon}^{FS}$, and $Z_{\varepsilon}^{FS}$, such that
\begin{align*}
\phi^{FS}_{1,\varepsilon}(t)&=\E[\phi_{1,\varepsilon}^{FS}(T)]+ \int_0^t Y_{1,\varepsilon}^{FS}(s)dW(s) + \int_0^t Y_{2,\varepsilon}^{FS}(s)dB(s)\nonumber\\
&\qquad \qquad + \int_0^t\int_{|z|>\varepsilon}Z_\varepsilon^{FS}(s,z) \widetilde{N}(ds,dz).
\end{align*}
Using the martingale property of $\phi_{1,\varepsilon}^{FS}$ and equation \eqref{portfolio1}, we get $\E[\phi_{1,\varepsilon}^{FS}(T)]=\E[\phi^{FS}_{1,\varepsilon}(0)]=0$.
Therefore we deduce
\begin{align}\label{phi-approximation}
\phi^{FS}_{1,\varepsilon}(t)&= \int_0^t Y_{1,\varepsilon}^{FS}(s)dW(s) + \int_0^t Y_{2,\varepsilon}^{FS}(s)dB(s) + \int_0^t\int_{|z|>\varepsilon}Z_\varepsilon^{FS}(s,z) \widetilde{N}(ds,dz).
\end{align}
In view of the orthogonality of $\phi^{FS}_{1,\varepsilon}$ with respect to $M_{1,\varepsilon}$, we have
\begin{align}\label{orthogonality-approximation}
0=Y_{1,\varepsilon}^{FS}(t)b(t)+Y^{FS}_{2,\varepsilon}G(\varepsilon)\widetilde{\gamma}(t)+\int_{|z|\geq\varepsilon}Z_{\varepsilon}^{FS}(t,z)\gamma(t,z)\ell(dz).
\end{align}
The equation we obtain for the approximating problem is thus given by
\begin{equation}\label{portfolio-approximation} 
\left\{ \begin{array}{ll}
d\widetilde{V}_{1,\varepsilon}(t)&=\widetilde{\pi}_{1,\varepsilon}(t)(a(t)-r(t))dt +(\widetilde{\pi}_{1,\varepsilon}(t) b(t)+Y_{1,\varepsilon}^{FS}(t))dW(t)\\
&\qquad \qquad + (\widetilde{\pi}_{1,\varepsilon}(t)G(\varepsilon)\widetilde{\gamma}(t) + Y_{2,\varepsilon}^{FS}(t))dB(t)\\
&\qquad \qquad  + \displaystyle\int_{|z|> \varepsilon}\big(\widetilde{\pi}_{1,\varepsilon}(t)\gamma(t,z)+Z^{FS}_\varepsilon(t,z)\big)\widetilde{N}(dt,dz),\\
\widetilde{V}_{1,\varepsilon}(T)&=\widetilde{\xi}^1_\varepsilon.
\end{array} \right.
\end{equation}
In order to apply the robustness results studied in Section \ref{robustness}, we have to prove that $\widetilde{V}$ and $\widetilde{V}_{1,\varepsilon}$
are respectively equations of type \eqref{bsdes} and \eqref{bsdes-approximation1}. That's the purpose of the next lemma. Notice that here above $\widetilde{V}_{1,\varepsilon}$,
$\widetilde{\pi}_{1,\varepsilon}$, and $\phi^{FS}_{1,\varepsilon}$ are all $\mathcal{G}_t^\varepsilon$-measurable. However since $\mathcal{G}^\varepsilon_t \subset \mathcal{G}_t$, then $\widetilde{V}_{1,\varepsilon}$,
$\widetilde{\pi}_{1,\varepsilon}$, and $\phi^{FS}_{1,\varepsilon}$ are also $\mathcal{G}_t$-measurable.
\begin{lemma}\label{lipschitz-lemma}
Let $\kappa(t) = b^2(t) +\int_{\R_0}\gamma^2(t,z)\ell(dz)$. Assume that for all $t \in [0,T]$, 
\begin{align}\label{lipschitz-condition}
\frac{|a(t)-r(t)|}{\sqrt{\kappa(t)}}\leq C, \qquad \mathbb{P}-a.s.,
\end{align}
for a positive constant $C$.
Let $\widetilde{V}$, $\widetilde{V}_{1,\varepsilon}$ be given by \eqref{portfolio}, \eqref{portfolio-approximation}, respectively. Then $\widetilde{V}$ satisfies a BSDEJ of type \eqref{bsdes} and $\widetilde{V}_{1,\varepsilon}$ satisfies a BSDEJ of type \eqref{bsdes-approximation1}.
\end{lemma}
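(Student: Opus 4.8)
The plan is to read off the BSDEJ data directly from the forward dynamics \eqref{portfolio} and \eqref{portfolio-approximation} and then verify that the resulting driver satisfies Assumptions \ref{lipschitz-assumption}(B), with the Lipschitz constant produced precisely by the hypothesis \eqref{lipschitz-condition}. For $\widetilde{V}$ I would set $X(t)=\widetilde{V}(t)$, $Y(t)=\widetilde{\pi}(t)b(t)+Y^{FS}(t)$ and $Z(t,z)=\widetilde{\pi}(t)\gamma(t,z)+Z^{FS}(t,z)$, which are exactly the integrands appearing in \eqref{portfolio}. Rewriting \eqref{portfolio} with $-d\widetilde{V}$ on the left-hand side then puts it in the form \eqref{bsdes} with terminal value $\widetilde{\xi}$ and driver $f(t,x,y,z)=-\widetilde{\pi}(t)(a(t)-r(t))$. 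The terminal value $\widetilde{\xi}$ is $\mathcal{F}_T$-measurable and square integrable by assumption, so part (A) of Assumptions \ref{lipschitz-assumption} holds at once.

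The crucial point is to express $\widetilde{\pi}$ through $(Y,Z)$ so that $f$ becomes a genuine function of the solution. Substituting $Y^{FS}=Y-\widetilde{\pi}b$ and $Z^{FS}=Z-\widetilde{\pi}\gamma$ into the orthogonality relation \eqref{orthogonality} yields $Yb+\int_{\R_0}Z\gamma\,\ell(dz)=\widetilde{\pi}\,\kappa$, hence $\widetilde{\pi}=\kappa^{-1}\big(Yb+\int_{\R_0}Z\gamma\,\ell(dz)\big)$ and $f(t,x,y,z)=-\frac{a(t)-r(t)}{\kappa(t)}\big(yb(t)+\int_{\R_0}z(u)\gamma(t,u)\,\ell(du)\big)$. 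This $f$ is independent of $x$, progressively measurable since all coefficients are adapted, and vanishes at $(0,0,0)$, so $f(\cdot,0,0,0)=0\in H^2_T$. For the Lipschitz estimate I would bound the difference by $\frac{|a-r|}{\kappa}\big(|b|\,|y_1-y_2|+|\int_{\R_0}(z_1-z_2)\gamma\,\ell(dz)|\big)$, apply Cauchy--Schwarz to the jump term to obtain $\|z_1-z_2\|\,(\int_{\R_0}\gamma^2\ell)^{1/2}$, and then use that both $|b|$ and $(\int_{\R_0}\gamma^2\ell)^{1/2}$ are dominated by $\sqrt{\kappa}$. This collapses the prefactor to $\frac{|a-r|}{\sqrt{\kappa}}\le C$ by \eqref{lipschitz-condition}, giving the uniform Lipschitz bound and showing that $\widetilde{V}$ solves a BSDEJ of type \eqref{bsdes}.

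For $\widetilde{V}_{1,\varepsilon}$ the argument is identical in structure, now carrying the extra Brownian integrand: set $\zeta_\varepsilon=\widetilde{\pi}_{1,\varepsilon}G(\varepsilon)\widetilde{\gamma}+Y^{FS}_{2,\varepsilon}$ alongside the analogous $X_\varepsilon,Y_\varepsilon,Z_\varepsilon$, so that \eqref{portfolio-approximation} takes the form \eqref{bsdes-approximation1} with driver $f^1(t,x,y,z,\zeta)=-\widetilde{\pi}_{1,\varepsilon}(a-r)$. Feeding $Y^{FS}_{1,\varepsilon},Y^{FS}_{2,\varepsilon},Z^{FS}_\varepsilon$ into the orthogonality relation \eqref{orthogonality-approximation} gives $\widetilde{\pi}_{1,\varepsilon}\big(b^2+G^2(\varepsilon)\widetilde{\gamma}^2+\int_{|z|>\varepsilon}\gamma^2\ell(dz)\big)=yb+\zeta G(\varepsilon)\widetilde{\gamma}+\int_{|z|>\varepsilon}z\gamma\,\ell(dz)$. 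Here I would invoke the key identity, already observed in the computation of $K_{1,\varepsilon}$ and coming from $\gamma=g\widetilde{\gamma}$ together with \eqref{variance-term}, that $b^2+G^2(\varepsilon)\widetilde{\gamma}^2+\int_{|z|>\varepsilon}\gamma^2\ell(dz)=\kappa$, so the same denominator $\kappa$ reappears. The Lipschitz estimate then proceeds exactly as before, with the three coefficients $|b|$, $G(\varepsilon)|\widetilde{\gamma}|$ and $(\int_{|z|>\varepsilon}\gamma^2\ell)^{1/2}$ each dominated by $\sqrt{\kappa}$, again reducing the constant to $\frac{|a-r|}{\sqrt{\kappa}}\le C$, which matches the requirement in Assumptions \ref{lipschitz-assumption-1}.

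The main obstacle is exactly this Lipschitz verification, and the whole proof hinges on it: the orthogonality relations are what turn the a priori $\widetilde{\pi}$-dependence of the drift into a controllable linear functional of $(Y,Z,\zeta)$, while the identity $\kappa_{1,\varepsilon}=\kappa$ and the elementary bounds $b^2,\,G^2(\varepsilon)\widetilde{\gamma}^2,\,\int\gamma^2\ell\le\kappa$ are precisely what force the prefactor down to $|a-r|/\sqrt{\kappa}$. Condition \eqref{lipschitz-condition} is thereby seen to be tailored exactly to supply the uniform Lipschitz constant. I would also record the standing non-degeneracy $\kappa>0$, under which $\widetilde{\pi}$ and the process $\alpha$ in \eqref{alpha-process} are well defined, so that the divisions above make sense.
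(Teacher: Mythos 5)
Your proposal is correct and follows essentially the same route as the paper: identify $\widetilde{Y},\widetilde{Z}$ (and $\widetilde{\zeta}_\varepsilon$) as the integrands of \eqref{portfolio} and \eqref{portfolio-approximation}, use the orthogonality relations \eqref{orthogonality} and \eqref{orthogonality-approximation} together with the identity $b^2+G^2(\varepsilon)\widetilde{\gamma}^2+\int_{|z|>\varepsilon}\gamma^2\ell(dz)=\kappa$ to solve for $\widetilde{\pi}$ and $\widetilde{\pi}_{1,\varepsilon}$, and then obtain the Lipschitz bound from Cauchy--Schwarz and the domination of $|b|$, $G(\varepsilon)|\widetilde{\gamma}|$, $(\int\gamma^2\ell)^{1/2}$ by $\sqrt{\kappa}$, which reduces the constant to $|a-r|/\sqrt{\kappa}\le C$ exactly as in the paper's proof. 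Your explicit verification of the remaining items of Assumptions \ref{lipschitz-assumption}(B) (measurability, $f(\cdot,0,0,0)\in H^2_T$) and of the non-degeneracy $\kappa>0$ only makes the argument slightly more complete than the paper's.
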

\begin{proof} From the expression of $\widetilde{V}$, we deduce 
\begin{equation*}
\left\{ \begin{array}{ll}
d\widetilde{V}(t)&=-f(t,\widetilde{V}(t), \widetilde{Y}(t),\widetilde{Z}(t,.))+\widetilde{Y}(t)dW(t)+\displaystyle\int_{\R_0} \widetilde{Z}(t,z)\widetilde{N}(dt,dz),\\
\widetilde{V}(T)&=\widetilde{\xi},
\end{array} \right.
\end{equation*}
where 
\begin{align}\label{y-tilde}
\widetilde{Y}(t)&= \widetilde{\pi}(t) b(t)+Y^{FS}(t), \quad \widetilde{Z}(t,z)=\widetilde{\pi}(t)\gamma(t,z)+Z^{FS}(t,z), \nonumber\\[-3mm] 
 \mbox{}\\[-3mm]
&f(t,\widetilde{V}(t), \widetilde{Y}(t),\widetilde{Z}(t,.))=-\widetilde{\pi}(t)(a(t)-r(t)). \nonumber
\end{align}
We have to show that $f$ satisfies Assumptions \ref{lipschitz-assumption}(B). We first express $\widetilde{\pi}$ in terms of $\widetilde{V}$, $\widetilde{Y}$, and $\widetilde{Z}$.
Inspired by \eqref{orthogonality}, we combine $\widetilde{Y}$ and $\widetilde{Z}$ to get 
\begin{align*}
\widetilde{Y}(t)b(t)+\int_{\R_0}\widetilde{Z}(t,z)\gamma(t,z)\ell(dz)&= \widetilde{\pi}(t)\Big(b^2(t) +\int_{\R_0}\gamma^2(t,z)\ell(dz)\Big)+Y^{FS}(t)b(t)\\
&\qquad + \int_{\R_0}Z^{FS}(t,z)\gamma(t,z)\ell(dz).
\end{align*}
From \eqref{orthogonality}, we deduce that
\begin{align}\label{pi}
\widetilde{\pi}(t) = \frac{1}{\kappa(t)}\Big(\widetilde{Y}(t)b(t)+\int_{\R_0}\widetilde{Z}(t,z)\gamma(t,z)\ell(dz)\Big).
\end{align}
Hence 
$$f(t,\widetilde{V}(t), \widetilde{Y}(t),\widetilde{Z}(t,.))= - \frac{a(t)-r(t)}{\kappa(t)}\Big(\widetilde{Y}(t)b(t)+
\int_{\R_0}\widetilde{Z}(t,z)\gamma(t,z)\ell(dz)\Big).$$
Now we have to prove that $f$ is Lipschitz. Let 
\begin{equation}\label{h-process}
h(t)=\frac{a(t)-r(t)}{\kappa(t)}, \qquad t\in [0,T].
\end{equation}
We have 
\begin{align*}
|f(t,x_1,y_1,z_1)-f(t,x_2,y_2,z_2)|&\leq |h(t)|\Big[|y_1-y_2||b(t)|+\int_{\R_0}|z_1-z_2||\gamma(t,z)|\ell(dz)\Big]\\
&\leq |h(t)|[|y_1-y_2||b(t)|\\
&\qquad +(\int_{\R_0}|z_1-z_2|^2\ell(dz))^{\frac{1}{2}} (\int_{\R_0}|\gamma(t,z)|^2\ell(dz))^{\frac{1}{2}}]\\
&\leq \sqrt{\kappa(t)}|h(t)|\Big(|y_1-y_2|+\|z_1-z_2\|\Big).
\end{align*}
Thus $f$ is Lipschitz if there exists a positive constant $C$ such that 
$$\sqrt{\kappa(t)}|h(t)|=\frac{|a(t)-r(t)|}{\sqrt{\kappa(t)}}\leq C \qquad  \forall t \in [0,T] $$
and we prove the statement for $\widetilde{V}$.

From equation \eqref{portfolio-approximation}, we have 
\begin{equation*}
\left\{ \begin{array}{ll}
d\widetilde{V}_{1,\varepsilon}(t)&=-f^1(t,\widetilde{V}_{1,\varepsilon}(t), \widetilde{Y}_\varepsilon(t),\widetilde{Z}_\varepsilon(t,.),\widetilde{\zeta}_\varepsilon(t))+\widetilde{Y}_\varepsilon(t)dW(t)+\widetilde{\zeta}_\varepsilon(t)dB(t)\\
&\qquad \qquad + \displaystyle\int_{\R_0}\widetilde{Z}_\varepsilon(t,z)\widetilde{N}(dt,dz),\\
\widetilde{V}_{1,\varepsilon}(T)&=\widetilde{\xi}^1_\varepsilon,
\end{array} \right.
\end{equation*}
where 
\begin{align}\label{y-tilde-processes1}
\widetilde{Y}_\varepsilon(t)= \widetilde{\pi}_{1,\varepsilon}(t) b(t)+Y_{1,\varepsilon}^{FS}(t),&\quad
\widetilde{\zeta}_\varepsilon(t)=\widetilde{\pi}_{1,\varepsilon}(t) G(\varepsilon)\widetilde{\gamma}(t)+Y_{2,\varepsilon}^{FS}(t),\nonumber\\
\widetilde{Z}_\varepsilon(t,z)&=(\widetilde{\pi}_{1,\varepsilon}(t)\gamma(t,z)+Z^{FS}_\varepsilon(t,z))\mathbf{1}_{|z|>\varepsilon}(z),\\
f^1(t,\widetilde{V}_{1,\varepsilon}(t), \widetilde{Y}_\varepsilon(t),\widetilde{Z}_\varepsilon(t,.),\widetilde{\zeta}_\varepsilon(t))&=-\widetilde{\pi}_{1,\varepsilon}(t)(a(t)-r(t)). \nonumber
\end{align}
With the same arguments as above and using \eqref{orthogonality-approximation} we can prove that
\begin{align}\label{pi-approximation}
\widetilde{\pi}_{1,\varepsilon}(t)=\frac{1}{\kappa(t)}\Big\{\widetilde{Y}_\varepsilon(t)b(t)+\widetilde{\zeta}_\varepsilon(t)G(\varepsilon)\widetilde{\gamma}(t)+\int_{\R_0}
\widetilde{Z}_\varepsilon(t,z)\gamma(t,z)\ell(dz)\Big\}.
\end{align}
Hence
\begin{align*}
f^1(t,\widetilde{V}(t), \widetilde{Y}_\varepsilon(t),\widetilde{Z}_\varepsilon(t,.), \widetilde{\zeta}_\varepsilon(t))&= - \frac{a(t)-r(t)}{\kappa(t)}\Big(\widetilde{Y}_\varepsilon(t)b(t)+
\widetilde{\zeta}_\varepsilon(t)G(\varepsilon)\widetilde{\gamma}(t)\\
&\qquad +\int_{\R_0}\widetilde{Z}_\varepsilon(t,z)\gamma(t,z)\ell(dz)\Big)
\end{align*}
and
\begin{align*}
|f^1(t,x,y,z,\zeta)-f^1(t,\widetilde{x},\widetilde{y},\widetilde{z},\widetilde{\zeta})|&\leq |h(t)|\Big[|y-\widetilde{y}||b(t)|+\int_{\R_0}|z-\widetilde{z}||\gamma(t,z)|\ell(dz)\\
&\qquad +G(\varepsilon)|\widetilde{\gamma}(t)||\zeta-\widetilde{\zeta}|\Big]\\
&\leq \sqrt{2\kappa(t)}|h(t)|\Big(|y-\widetilde{y}|+|\zeta-\widetilde{\zeta}|+\|z-\widetilde{z}\|\Big)
\end{align*}
and we prove the statement.
\end{proof}
Notice that the assumption \eqref{lipschitz-condition} in the preceding lemma implies that the value  $K(t)$ of the MVT process defined in \eqref{lipschitz-condition} is finite for all $t\in[0,T]$ (see \eqref{condition-mvt}).

Now we present the following main result in which we prove the robustness of the value of the portfolio.
\begin{theorem}\label{portfolio-robustness}
Assume that \eqref {lipschitz-condition} holds.
Let $\widetilde{V}$, $\widetilde{V}_{1,\varepsilon}$ be given by \eqref{portfolio}, \eqref{portfolio-approximation}, respectively. Then 
$$\E\Big[\sup_{0\leq t\leq T}|\widetilde{V}(t) - \widetilde{V}_{1,\varepsilon}(t)|^2\Big] \leq C\E[|\widetilde{\xi} -\widetilde{\xi}^1_\varepsilon|^2].$$
\end{theorem}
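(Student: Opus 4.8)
The plan is to recognize this statement as a direct consequence of Proposition \ref{robustness-X} in the case $\rho=1$, once the two value processes have been identified with solutions of BSDEJs of the appropriate types. By Lemma \ref{lipschitz-lemma}, $\widetilde{V}$ solves a BSDEJ of type \eqref{bsdes} with driver $f$ and terminal condition $\widetilde{\xi}$, while $\widetilde{V}_{1,\varepsilon}$ solves a BSDEJ of type \eqref{bsdes-approximation1} with driver $f^1$, terminal condition $\widetilde{\xi}^1_\varepsilon$, and the extra component $\widetilde{\zeta}_\varepsilon$ coming from the auxiliary Brownian motion $B$. Thus the difference $\widetilde{V}-\widetilde{V}_{1,\varepsilon}$ fits the unified framework \eqref{both-approximations} with $\rho=1$, and it suffices to check that the pair $(f,f^1)$ satisfies the compatibility condition \eqref{stronger-Lipcshitz-condition)}; the conclusion then follows verbatim from Proposition \ref{robustness-X}.

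The core of the argument, and the step I expect to require the most care, is verifying \eqref{stronger-Lipcshitz-condition)}. Using the explicit expressions from Lemma \ref{lipschitz-lemma}, both drivers have the form $-h(t)$ times a linear functional of the integrand arguments, where $h(t)=\frac{a(t)-r(t)}{\kappa(t)}$; in particular neither driver depends on its first argument, so the $x$-variables drop out of the difference. Subtracting and applying the triangle inequality, I would arrive at
\begin{align*}
&|f(t,x_1,y_1,z_1)-f^1(t,x_2,y_2,z_2,\zeta)|\\
&\quad \leq |h(t)|\Big[|y_1-y_2|\,|b(t)|+G(\varepsilon)|\widetilde{\gamma}(t)|\,|\zeta|+\int_{\R_0}|z_1-z_2|\,|\gamma(t,z)|\ell(dz)\Big].
\end{align*}
I would then bound the integral by Cauchy--Schwarz to produce $\|z_1-z_2\|\,(\int_{\R_0}\gamma^2(t,z)\ell(dz))^{1/2}$, and use $|b(t)|\leq\sqrt{\kappa(t)}$ together with $(\int_{\R_0}\gamma^2(t,z)\ell(dz))^{1/2}\leq\sqrt{\kappa(t)}$, both immediate from the definition of $\kappa$.

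The delicate point is controlling the coefficient $G(\varepsilon)|\widetilde{\gamma}(t)|$ of $|\zeta|$ \emph{uniformly in} $\varepsilon$, since this is the term that has no counterpart in the continuous-driver estimate and on which the $\varepsilon$-uniformity of the final constant hinges. Here I would exploit the factorization $\gamma(t,z)=g(z)\widetilde{\gamma}(t)$ and the definition \eqref{variance-term} of $G(\varepsilon)$: since $G^2(\varepsilon)=\int_{|z|\leq\varepsilon}g^2(z)\ell(dz)\leq\int_{\R_0}g^2(z)\ell(dz)$, one obtains $G(\varepsilon)|\widetilde{\gamma}(t)|\leq|\widetilde{\gamma}(t)|(\int_{\R_0}g^2(z)\ell(dz))^{1/2}=(\int_{\R_0}\gamma^2(t,z)\ell(dz))^{1/2}\leq\sqrt{\kappa(t)}$. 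Collecting the three terms yields the bound $|h(t)|\sqrt{\kappa(t)}\,(|y_1-y_2|+|\zeta|+\|z_1-z_2\|)$, and since $|h(t)|\sqrt{\kappa(t)}=\frac{|a(t)-r(t)|}{\sqrt{\kappa(t)}}\leq C$ by assumption \eqref{lipschitz-condition}, condition \eqref{stronger-Lipcshitz-condition)} holds with a constant independent of $\varepsilon$. Applying Proposition \ref{robustness-X} with $\rho=1$ to $\widetilde{V}$ and $\widetilde{V}_{1,\varepsilon}$ then gives exactly the claimed estimate, the $\varepsilon$-uniformity of the constant being inherited directly from the uniform Lipschitz bound just established.
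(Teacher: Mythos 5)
Your proposal is correct and follows essentially the same route as the paper: the paper's proof likewise reduces the statement to Proposition \ref{robustness-X} (via Lemma \ref{lipschitz-lemma}) and then verifies the compatibility condition \eqref{stronger-Lipcshitz-condition)} for the pair $(f,f^1)$ by bounding $|b(t)|$, $G(\varepsilon)|\widetilde{\gamma}(t)|$ and the Cauchy--Schwarz term each by $\sqrt{\kappa(t)}$, so that $|h(t)|\sqrt{\kappa(t)}\leq C$ by \eqref{lipschitz-condition}. Your write-up is in fact slightly more explicit than the paper's on the key point that the bound $G(\varepsilon)|\widetilde{\gamma}(t)|\leq\sqrt{\kappa(t)}$ makes the Lipschitz constant uniform in $\varepsilon$.
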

\begin{proof}
This is an immediate result of Proposition \ref{robustness-X}. We only have to prove the assumption \eqref{stronger-Lipcshitz-condition)} on the drivers $f$ and $f^1$.
We have for all $t\in [0,T]$
\begin{align*}
&|f(t,\widetilde{V}(t),\widetilde{Y}(t),\widetilde{Z}(t,.))-f^1(t,\widetilde{V}_{1,\varepsilon}(t),\widetilde{Y}_\varepsilon(t),\widetilde{Z}_\varepsilon(t,.),Y_{2,\varepsilon}(t))|\\
&\qquad = \Big|h(t)\Big\{(\widetilde{Y}(t)-\widetilde{Y}_\varepsilon)b(t)-\widetilde{\zeta}_\varepsilon(t)G(\varepsilon)\widetilde{\gamma}(t)\\
&\qquad \qquad +\int_{\R_0}(\widetilde{Z}(t,z)-\widetilde{Z}_\varepsilon(t,z))\gamma(t,z)\ell(dz)\Big\}\Big|\\
&\qquad \leq 2|h(t)|\sqrt{\kappa(t)}\Big\{|\widetilde{Y}(t)-\widetilde{Y}_\varepsilon(t)|+\|\widetilde{Z}(t,.)-\widetilde{Z}_{\varepsilon}(t,.)\|+|\widetilde{\zeta}_\varepsilon(t)|\Big\},
\end{align*}
which proves the statement.  
\end{proof}

\begin{remark}
We used the expectation $\E[|\widetilde{\xi}-\widetilde{\xi}^\rho_\varepsilon|^2]$ to dominate the convergence results.
In finance the discounted contingent claim $\widetilde{\xi}= \frac{\xi}{S^{(0)}(T)}$ is given by the pay-off function $\xi = f(S^{(1)}(T))$. Thus we have
$$
\E[|\widetilde{\xi}-\widetilde{\xi}^\rho_\varepsilon|^2]=\E \Big[\Big|\frac{f(S^{(1)}(T))}{S^{(0)}(T)}-\frac{f(S^{(1)}_{\rho,\varepsilon}(T))}{S^{(0)}(T) } \Big|^2 \Big],  \qquad \rho=0,1,
$$
where the case $\rho=0$ refers to the second candidate-approximation of section \ref{secondapprox}.
The convergence of the latter quantity when $\varepsilon$ goes to $0$ was studied in Benth et al.\ \cite{BDK} using Fourier transform techniques. It was also studied in Kohatsu-Higa and Tankov \cite{KT} in which the authors show that adding a small variance Brownian motion to the big jumps gives better convergence results
than when we only truncate the small jumps. For this purpose the authors consider a discretization of the price models.
\end{remark}
The next theorem contains the robustness result for the amount of wealth to invest in the stock in a locally risk-minimizing strategy. 
\begin{theorem}\label{robustness-pi}
Assume that \eqref {lipschitz-condition} holds and that for all $t\in [0,T]$,
\begin{align}\label{kappa}
\inf_{t\leq s \leq T} \kappa(s)\geq K, \qquad \mathbb{P}-a.s.,
\end{align}
where $K$ is a strictly positive constant.
Let $\widetilde{\pi}$, $\widetilde{\pi}_{1,\varepsilon}$ be given by \eqref{pi}, \eqref{pi-approximation}, respectively. Then for all $t\in [0,T]$,
$$\E\Big[\int_t^T|\widetilde{\pi}(s)-\widetilde{\pi}_{1,\varepsilon}(s)|^2ds\Big] \leq C\E[|\widetilde{\xi} -\widetilde{\xi}^1_\varepsilon|^2],$$
where $C$ is a positive constant.
\end{theorem}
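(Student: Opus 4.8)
The plan is to reduce the statement to Theorem \ref{bsdej-robustness} by means of a pointwise estimate on $\widetilde{\pi}(s)-\widetilde{\pi}_{1,\varepsilon}(s)$. First I would subtract the two representations \eqref{pi} and \eqref{pi-approximation}. Since both share the common factor $1/\kappa(s)$, the difference reads
\[
\widetilde{\pi}(s)-\widetilde{\pi}_{1,\varepsilon}(s)=\frac{1}{\kappa(s)}\Big\{\big(\widetilde{Y}(s)-\widetilde{Y}_\varepsilon(s)\big)b(s)-\widetilde{\zeta}_\varepsilon(s)G(\varepsilon)\widetilde{\gamma}(s)+\int_{\R_0}\big(\widetilde{Z}(s,z)-\widetilde{Z}_\varepsilon(s,z)\big)\gamma(s,z)\ell(dz)\Big\}.
\]
The goal is then to control the right-hand side by the three quantities $|\widetilde{Y}-\widetilde{Y}_\varepsilon|$, $\|\widetilde{Z}-\widetilde{Z}_\varepsilon\|$, and $|\widetilde{\zeta}_\varepsilon|$, for which Theorem \ref{bsdej-robustness} (with $\rho=1$, applicable because \eqref{stronger-Lipcshitz-condition)} was already verified in the proof of Theorem \ref{portfolio-robustness}) provides an $L^2$-estimate in terms of $\E[|\widetilde{\xi}-\widetilde{\xi}^1_\varepsilon|^2]$.

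The key step is to observe that every coefficient appearing above is dominated by $\sqrt{\kappa(s)}$. Indeed, by the definition $\kappa(s)=b^2(s)+\int_{\R_0}\gamma^2(s,z)\ell(dz)$ one has $|b(s)|\leq\sqrt{\kappa(s)}$ and $\big(\int_{\R_0}\gamma^2(s,z)\ell(dz)\big)^{1/2}\leq\sqrt{\kappa(s)}$; the latter bound, via the Cauchy--Schwarz inequality, handles the integral term. For the term $G(\varepsilon)\widetilde{\gamma}(s)$ I would use the structural assumption $\gamma(t,z)=g(z)\widetilde{\gamma}(t)$ together with \eqref{variance-term}: this gives $G^2(\varepsilon)\widetilde{\gamma}^2(s)=\widetilde{\gamma}^2(s)\int_{|z|\leq\varepsilon}g^2(z)\ell(dz)\leq\widetilde{\gamma}^2(s)\int_{\R_0}g^2(z)\ell(dz)=\int_{\R_0}\gamma^2(s,z)\ell(dz)\leq\kappa(s)$, so $G(\varepsilon)|\widetilde{\gamma}(s)|\leq\sqrt{\kappa(s)}$ as well, uniformly in $\varepsilon$. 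This uniform control of the $B$-coefficient is what makes the argument work.

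Combining these bounds yields $|\widetilde{\pi}(s)-\widetilde{\pi}_{1,\varepsilon}(s)|\leq \kappa(s)^{-1/2}\big(|\widetilde{Y}(s)-\widetilde{Y}_\varepsilon(s)|+|\widetilde{\zeta}_\varepsilon(s)|+\|\widetilde{Z}(s,\cdot)-\widetilde{Z}_\varepsilon(s,\cdot)\|\big)$. At this point I would invoke the lower bound \eqref{kappa}, $\kappa(s)\geq K>0$, to replace $\kappa(s)^{-1/2}$ by $K^{-1/2}$, then square and apply $(a+b+c)^2\leq 3(a^2+b^2+c^2)$, integrate over $[t,T]$, and take expectations. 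The resulting three expectations are exactly those estimated in Theorem \ref{bsdej-robustness}, so the claimed inequality follows with a constant $C$ proportional to $1/K$. The main obstacle is purely the coefficient estimate of the middle paragraph — in particular recognizing that $G(\varepsilon)\widetilde{\gamma}$ is bounded by $\sqrt{\kappa}$ independently of $\varepsilon$; once that is in place the remainder is a routine Cauchy--Schwarz argument and a direct appeal to the BSDEJ robustness theorem, with the nondegeneracy hypothesis \eqref{kappa} being the only additional ingredient needed beyond Theorem \ref{portfolio-robustness}.
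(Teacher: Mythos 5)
Your proposal is correct and follows essentially the same route as the paper's own proof: subtract \eqref{pi} and \eqref{pi-approximation}, bound each coefficient ($b$, $G(\varepsilon)\widetilde{\gamma}$ via the structure $\gamma(t,z)=g(z)\widetilde{\gamma}(t)$, and the $\gamma$-integral via Cauchy--Schwarz) by $\sqrt{\kappa(s)}$, invoke the nondegeneracy condition \eqref{kappa}, and conclude with Theorem \ref{bsdej-robustness}. In fact you make explicit the uniform-in-$\varepsilon$ bound $G(\varepsilon)|\widetilde{\gamma}(s)|\leq\sqrt{\kappa(s)}$, which the paper uses without comment.
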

\begin{proof}
Using \eqref{pi} and \eqref{pi-approximation}, we have
\begin{align*}
|\widetilde{\pi}(s)-\widetilde{\pi}_{1,\varepsilon}(s)|^2&=\frac{1}{\kappa^2(s)}\Big\{(\widetilde{Y}(s)-\widetilde{Y}_\varepsilon(s))b(s)-\widetilde{\zeta}_\varepsilon(s)G(\varepsilon)\widetilde{\gamma}(s)\\
&\qquad +\int_{\R_0}(\widetilde{Z}(s,z)-\widetilde{Z}_\varepsilon(s,z))\gamma(s,z)\ell(dz)\Big\}^2\\
& \leq \frac{C}{\kappa(s)}\Big\{|\widetilde{Y}(s)-\widetilde{Y}_\varepsilon(s)|^2 +|\widetilde{\zeta}_\varepsilon(s)|^2+\int_{\R_0}|\widetilde{Z}(s,z)-\widetilde{Z}_\varepsilon(s,z)|^2\ell(dz)\Big\},
\end{align*}
where $C$ is a positive constant. Hence from Theorem \ref{bsdej-robustness} and Theorem \ref{lipschitz-lemma}, we deduce 
\begin{align*}
\E\Big[\int_t^T|\widetilde{\pi}(s)-\widetilde{\pi}_{1,\varepsilon}(s)|^2ds\Big]
& \leq \frac{C}{\inf_{t \leq s \leq T}\kappa(s)}\Big\{\E\Big[ \int_t^T |\widetilde{Y}(s)-\widetilde{Y}_\varepsilon(s)|^2ds\Big]\\
&\qquad +\E\Big[ \int_t^T|\widetilde{\zeta}_\varepsilon(s)|^2ds\Big] \\
&\qquad +\E\Big[ \int_t^T\int_{\R_0}|\widetilde{Z}(s,z)-\widetilde{Z}_\varepsilon(s,z)|^2\ell(dz)ds\Big]\Big\}\\
&\leq \widetilde{C}\E[|\widetilde{\xi}-\widetilde{\xi}^1_\varepsilon|^2]
\end{align*}
and we prove the statement.
\end{proof}
The robustness of the process $\phi^{FS}$ defined in \eqref{phi} is shown in the next theorem.
\begin{theorem}\label{robustness-phi}
Assume that \eqref {lipschitz-condition}  and \eqref{kappa} hold and for all $t\in [0,T]$,
\begin{equation}\label{gamma-kappa}
\sup_{t\leq s\leq T}\widetilde{\gamma}^2(s) \leq \widetilde{K}, \qquad \sup_{t\leq s\leq T}\kappa(s) \leq \widehat{K}<\infty, \qquad \mathbb{P}-a.s.
\end{equation}
Let $\phi^{FS}$, $\phi_{1,\varepsilon}^{FS}$ be given by \eqref{phi}, \eqref{phi-approximation}, respectively. Then for all $t \in [0,T]$, we have
$$\E\Big[|\phi^{FS}(t)-\phi_{1,\varepsilon}^{FS}(t)|^2\Big] \leq C\E[|\widetilde{\xi} -\widetilde{\xi}^1_\varepsilon|^2]+C'G^2(\varepsilon),$$
where $C$ and $C'$ are positive constants.
\end{theorem}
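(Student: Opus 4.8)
The plan is to work directly from the martingale representations \eqref{phi} of $\phi^{FS}$ and \eqref{phi-approximation} of $\phi^{FS}_{1,\varepsilon}$. Subtracting them and grouping the integrators, $\phi^{FS}(t)-\phi^{FS}_{1,\varepsilon}(t)$ is a sum of four stochastic integrals: the $W$-integral of $Y^{FS}-Y^{FS}_{1,\varepsilon}$, the $B$-integral of $-Y^{FS}_{2,\varepsilon}$, the jump integral of $Z^{FS}-Z^{FS}_\varepsilon$ over $\{|z|>\varepsilon\}$, and the leftover jump integral of $Z^{FS}$ over $\{|z|\le\varepsilon\}$. Since $W$, $B$, and $\widetilde N$ are mutually orthogonal martingale random fields (Theorem \ref{representation-theorem}), the cross terms vanish and the It\^o isometry expresses $\E[|\phi^{FS}(t)-\phi^{FS}_{1,\varepsilon}(t)|^2]$ as the sum of the four corresponding $H^2_T$/$\widehat H^2_T$ energies. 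The whole proof then reduces to estimating these four nonnegative terms.

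Next I would eliminate $Y^{FS},Z^{FS}$ and their $\varepsilon$-analogues in favour of the portfolio quantities using \eqref{y-tilde} and \eqref{y-tilde-processes1}, so that $Y^{FS}-Y^{FS}_{1,\varepsilon}=(\widetilde Y-\widetilde Y_\varepsilon)-b(\widetilde\pi-\widetilde\pi_{1,\varepsilon})$, $Z^{FS}-Z^{FS}_\varepsilon=(\widetilde Z-\widetilde Z_\varepsilon)-\gamma(\widetilde\pi-\widetilde\pi_{1,\varepsilon})$ on $\{|z|>\varepsilon\}$, and $Y^{FS}_{2,\varepsilon}=\widetilde\zeta_\varepsilon-\widetilde\pi_{1,\varepsilon}G(\varepsilon)\widetilde\gamma$. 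Applying $(a+b)^2\le 2a^2+2b^2$ together with the bounds $b^2(t)\le\kappa(t)\le\widehat K$ and $\int_{\R_0}\gamma^2(t,z)\ell(dz)\le\kappa(t)\le\widehat K$ from \eqref{gamma-kappa}, the "difference" pieces $\widetilde Y-\widetilde Y_\varepsilon$, $\widetilde Z-\widetilde Z_\varepsilon$, $\widetilde\zeta_\varepsilon$, and $\widetilde\pi-\widetilde\pi_{1,\varepsilon}$ are each controlled by Theorem \ref{bsdej-robustness} and Theorem \ref{robustness-pi}, giving a contribution $\le C\,\E[|\widetilde\xi-\widetilde\xi^1_\varepsilon|^2]$.

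The terms carrying the explicit scale $G(\varepsilon)$, namely the part $\widetilde\pi_{1,\varepsilon}(s)G(\varepsilon)\widetilde\gamma(s)$ of the $B$-integrand and the compensator part $\widetilde\pi(s)\gamma(s,z)=\widetilde\pi(s)g(z)\widetilde\gamma(s)$ of the $\{|z|\le\varepsilon\}$ jump integral, produce, after using $\int_{|z|\le\varepsilon}g^2(z)\ell(dz)=G^2(\varepsilon)$ (cf.\ \eqref{variance-term}) and $\widetilde\gamma^2\le\widetilde K$, a bound of order $G^2(\varepsilon)\big(\E[\int_0^t\widetilde\pi^2(s)ds]+\E[\int_0^t\widetilde\pi^2_{1,\varepsilon}(s)ds]\big)$. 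To close this I would show that $\widetilde\pi$ and $\widetilde\pi_{1,\varepsilon}$ are bounded in $H^2_T$ uniformly in $\varepsilon$: this follows from the representations \eqref{pi}, \eqref{pi-approximation}, the lower bound \eqref{kappa} on $\kappa$, and the a priori estimates of Lemma \ref{solution-boundedness} (and its analogue for \eqref{bsdes-approximation1}) together with $\sup_{\varepsilon}\E[(\widetilde\xi^1_\varepsilon)^2]<\infty$, which holds because $\widetilde\xi^1_\varepsilon\to\widetilde\xi$ in $L^2_T$. This yields the term $C'G^2(\varepsilon)$.

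The delicate point, and the one I expect to be the main obstacle, is the residual jump energy $\E[\int_0^t\int_{|z|\le\varepsilon}|\widetilde Z(s,z)|^2\ell(dz)ds]$ that is left once the $\widetilde\pi\gamma$-part has been peeled off the $\{|z|\le\varepsilon\}$ integral. A naive dominated-convergence argument only shows this is $o(1)$ with no rate, and it is not of order $G^2(\varepsilon)$ for a general square-integrable integrand. The observation that resolves it is that the approximating integrand $\widetilde Z_\varepsilon$ is supported on $\{|z|>\varepsilon\}$ by \eqref{y-tilde-processes1}, so on $\{|z|\le\varepsilon\}$ one has $\widetilde Z=\widetilde Z-\widetilde Z_\varepsilon$; hence this residual is dominated by the full distance $\E[\int_0^t\int_{\R_0}|\widetilde Z-\widetilde Z_\varepsilon|^2\ell(dz)ds]$, which Theorem \ref{bsdej-robustness} (with $\rho=1$) bounds by $K\,\E[|\widetilde\xi-\widetilde\xi^1_\varepsilon|^2]$. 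Thus this term is absorbed into the first term of the claim rather than forcing a spurious $G^2(\varepsilon)$ rate. Collecting the three groups of estimates and choosing $C,C'$ accordingly gives the asserted inequality $\E[|\phi^{FS}(t)-\phi^{FS}_{1,\varepsilon}(t)|^2]\le C\,\E[|\widetilde\xi-\widetilde\xi^1_\varepsilon|^2]+C'G^2(\varepsilon)$.
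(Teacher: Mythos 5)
Your proposal is correct and follows essentially the same route as the paper: orthogonality/It\^{o} isometry reduces the claim to the energies of $Y^{FS}-Y^{FS}_{1,\varepsilon}$, $Y^{FS}_{2,\varepsilon}$, and $Z^{FS}-Z^{FS}_{\varepsilon}$; these are rewritten via \eqref{y-tilde} and \eqref{y-tilde-processes1}; the difference terms are absorbed by Theorem \ref{bsdej-robustness} and Theorem \ref{robustness-pi}; the $G^2(\varepsilon)$ terms are controlled through $\E\big[\int_t^T\widetilde{\pi}^2(s)ds\big]\leq C\E[\widetilde{\xi}^2]$ from \eqref{pi}, \eqref{kappa}, and Lemma \ref{solution-boundedness}; and the small-jump residual is handled by exactly the paper's (implicit) observation that $\widetilde{Z}_\varepsilon$ vanishes on $\{|z|\leq\varepsilon\}$, so that region is covered by the full distance $\E\big[\int_t^T\int_{\R_0}|\widetilde{Z}-\widetilde{Z}_\varepsilon|^2\ell(dz)ds\big]$. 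The only (harmless) deviation is that you bound $\E\big[\int_0^t\widetilde{\pi}_{1,\varepsilon}^2(s)ds\big]$ uniformly in $\varepsilon$ via an $\varepsilon$-analogue of Lemma \ref{solution-boundedness}, whereas the paper avoids needing any such analogue by splitting $\widetilde{\pi}_{1,\varepsilon}=(\widetilde{\pi}_{1,\varepsilon}-\widetilde{\pi})+\widetilde{\pi}$ and invoking Theorem \ref{robustness-pi} together with Lemma \ref{solution-boundedness} for the original equation only.
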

\begin{proof}
From \eqref{y-tilde}, \eqref{y-tilde-processes1}, Proposition \ref{robustness-X}, and Theorem \ref{robustness-pi}, we have
\begin{align*}
\E\Big[\int_t^T|Y^{FS}(s)-Y^{FS}_{1,\varepsilon}(s)|^2ds\Big]& \leq C\Big\{\E\Big[\int_t^T|\widetilde{Y}(s)-\widetilde{Y}_{1,\varepsilon}(s)|^2ds\Big]\\
&\qquad +\sup_{t \leq s \leq T}\kappa(s) \E\Big[\int_t^T|\widetilde{\pi}(s)-\widetilde{\pi}_{1,\varepsilon}(s)|^2ds\Big]\Big\}\\
&\leq \widetilde{C}\E[|\widetilde{\xi}-\widetilde{\xi}^1_\varepsilon|^2].
\end{align*}
Moreover, starting again from \eqref{y-tilde-processes1} we arrive at 
\begin{align*}
\E\Big[\int_t^T|Y^{FS}_{2,\varepsilon}(s)|^2ds\Big]&\leq C\Big\{\E[\int_t^T|\widetilde{\zeta}_\varepsilon(s)|^2ds + \sup_{t\leq s\leq T}\kappa(s) \E\Big[\int_t^T|\widetilde{\pi}(s)-\widetilde{\pi}_{1,\varepsilon}(s)|^2ds\Big]\\
&+G^2(\varepsilon)\sup_{t\leq s\leq T}\widetilde{\gamma}^2(s)\E\Big[\int_t^T|\widetilde{\pi}(s)|^2ds\Big].
\end{align*}
However from \eqref{pi} and Lemma \ref{solution-boundedness}, we get
\begin{align*}
\E\Big[\int_t^T|\widetilde{\pi}(s)|^2ds\Big]&\leq \frac{1}{\inf_{t\leq s \leq T}\kappa(s)}\Big\{\E\Big[\int_t^T\widetilde{Y}^2(s)ds\Big]+\E\Big[\int_t^T\int_{\R_0}\widetilde{Z}^2(s,z)\ell(dz)ds\Big]\Big\}\\
&\qquad \leq C\E[\widetilde{\xi}^2].
\end{align*}
Thus from Theorem \ref{bsdej-robustness} and Theorem \ref{robustness-pi} we conclude in view of assumption \eqref{gamma-kappa} 
\begin{align*}
\E\Big[\int_t^T|Y^{FS}_{2,\varepsilon}(s)|^2ds\Big]&\leq C\E[|\widetilde{\xi} -\widetilde{\xi}^1_\varepsilon|^2]+C'G^2(\varepsilon).
\end{align*}
Let $G^2(\infty)= \int_{\R_0}g^2(z)\ell(dz)$. From \eqref{y-tilde}, \eqref{y-tilde-processes1}, Theorem \ref{robustness-pi}, Lemma \ref{solution-boundedness} and Proposition \ref{robustness-X}, we obtain
\begin{align*}
&\E\Big[\int_t^T\int_{\R_0}|Z^{FS}(s,z)-Z^{FS}_\varepsilon(s,z)|^2\ell(dz)ds\Big]\\
&\qquad  \leq C \E\Big[\int_t^T\int_{\R_0}|\widetilde{Z}(s,z)-\widetilde{Z}_{\varepsilon}(s,z)|^2\ell(dz)ds\Big]\\
&\qquad \qquad + G^2(\infty)\sup_{t \leq s\leq T}\widetilde{\gamma}^2(s)\E\Big[\int_t^T|\widetilde{\pi}(s)-\widetilde{\pi}_{1,\varepsilon}(s)|^2ds\Big]\\
&\qquad \qquad +G^2(\varepsilon)\sup_{t\leq s \leq T}\widetilde{\gamma}^2(s) \E\Big[\int_t^T|\widetilde{\pi}(s)|^2ds\Big]\\
&\qquad \leq C\E[|\widetilde{\xi}-\widetilde{\xi}^1_\varepsilon|^2] + C'G^2(\varepsilon)\E[\widetilde{\xi}^2].
\end{align*}
Finally from \eqref{phi} and \eqref{phi-approximation}, we infer
\begin{align*}
\E[|\phi^{FS}(t)-\phi^{FS}_{1,\varepsilon}(t)|^2]&\leq \E\Big[\int_t^T|Y^{FS}(s)-Y_{1,\varepsilon}^{FS}(s)|^2ds\Big]+\E\Big[\int_t^T|Y_{2,\varepsilon}^{FS}(s)|^2ds\Big]\\
&\qquad +\E\Big[\int_t^T\int_{\R_0}|Z^{FS}(s,z)-Z_{\varepsilon}^{FS}(s,z)|^2\ell(dz)ds\Big]
\end{align*}
and the result follows.
\end{proof}
Let $C(t)=\phi^{FS}(t)+\widetilde{V}(0)$ and $C_{1,\varepsilon}(t)=\phi_{1,\varepsilon}^{FS}(t)+\widetilde{V}_{1,\varepsilon}(0)$.
Then the processes $C$ and $C_{1,\varepsilon}$ are the cost processes in a locally risk-minimizing strategy for $\widetilde{\xi}$ and $\widetilde{\xi}_\varepsilon^1$.
In the next corollary we prove the robustness of this cost process.
\begin{corollary}
Assume that \eqref {lipschitz-condition}, \eqref{kappa}, and \eqref{gamma-kappa} hold. Then for all $t \in [0,T]$ we have 
\begin{align*}
\E[|C(t)-C_{1,\varepsilon}(t)|^2] &\leq \widetilde{K}\E[|\widetilde{\xi} -\widetilde{\xi}^1_{\varepsilon}|^2]+ K'G^2(\varepsilon),
\end{align*}
where $\widetilde{K}$ and $K'$ are two positive constants.
\end{corollary}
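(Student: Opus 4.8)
The plan is to treat this as a corollary that simply assembles the two robustness estimates already proved, namely Theorem~\ref{portfolio-robustness} for the value of the portfolio and Theorem~\ref{robustness-phi} for the orthogonal martingale part of the FS decomposition. The starting point is the elementary decomposition
$$C(t) - C_{1,\varepsilon}(t) = \big(\phi^{FS}(t) - \phi^{FS}_{1,\varepsilon}(t)\big) + \big(\widetilde{V}(0) - \widetilde{V}_{1,\varepsilon}(0)\big),$$
to which I would apply the inequality $(a+b)^2 \le 2a^2 + 2b^2$ and then take expectations, obtaining
$$\E\big[|C(t) - C_{1,\varepsilon}(t)|^2\big] \le 2\,\E\big[|\phi^{FS}(t) - \phi^{FS}_{1,\varepsilon}(t)|^2\big] + 2\,\E\big[|\widetilde{V}(0) - \widetilde{V}_{1,\varepsilon}(0)|^2\big].$$

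For the first term on the right, since assumptions \eqref{lipschitz-condition}, \eqref{kappa}, and \eqref{gamma-kappa} are all in force, Theorem~\ref{robustness-phi} bounds it directly by $C\,\E[|\widetilde{\xi}-\widetilde{\xi}^1_\varepsilon|^2] + C'G^2(\varepsilon)$, which already exhibits the two contributions we want in the final estimate. For the second term, the key observation is that $\widetilde{V}(0) = \E_{\widetilde{\mathbb{Q}}}[\widetilde{\xi}]$ and $\widetilde{V}_{1,\varepsilon}(0) = \E_{\widetilde{\mathbb{Q}}^{1,\varepsilon}}[\widetilde{\xi}^1_\varepsilon]$ are deterministic constants (the conditioning $\sigma$-algebra $\mathcal{G}_0$ being trivial up to null sets), so that
$$\E\big[|\widetilde{V}(0) - \widetilde{V}_{1,\varepsilon}(0)|^2\big] = |\widetilde{V}(0) - \widetilde{V}_{1,\varepsilon}(0)|^2 \le \E\Big[\sup_{0\le t\le T}|\widetilde{V}(t) - \widetilde{V}_{1,\varepsilon}(t)|^2\Big].$$
The right-hand side is controlled by $C\,\E[|\widetilde{\xi}-\widetilde{\xi}^1_\varepsilon|^2]$ by Theorem~\ref{portfolio-robustness}, whose hypothesis \eqref{lipschitz-condition} is assumed here. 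Collecting the two bounds and relabelling the accumulated constants as $\widetilde{K}$ and $K'$ yields the claimed inequality.

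I do not expect a genuine obstacle, as the statement is designed to follow from the preceding theorems. The only point deserving a moment's care is the second term: one must recognise that the initial portfolio values are (non-random) constants, so their squared difference is dominated by the \emph{uniform-in-$t$} estimate of Theorem~\ref{portfolio-robustness} and needs no separate BSDEJ argument. Everything else is a routine application of $(a+b)^2\le 2a^2+2b^2$ and monotonicity of the expectation.
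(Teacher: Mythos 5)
Your proposal is correct and follows essentially the same route as the paper's own proof: decompose $C(t)-C_{1,\varepsilon}(t)$ into the $\phi^{FS}$-difference plus the initial-value difference, apply $(a+b)^2\le 2(a^2+b^2)$, bound the first term by Theorem~\ref{robustness-phi} and the second via Theorem~\ref{portfolio-robustness}. The only cosmetic difference is your explicit remark that the initial portfolio values are deterministic constants; the paper bounds $\E[|\widetilde{V}(0)-\widetilde{V}_{1,\varepsilon}(0)|^2]$ directly by the supremum estimate, which works identically.
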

\begin{proof}
From Theorem \ref{portfolio-robustness}, we deduce
$$\E\Big[|\widetilde{V}_{1,\varepsilon}(0)-\widetilde{V}(0)|^2]\leq  C\E[|\widetilde{\xi} -\widetilde{\xi}^1_\varepsilon|^2].$$
Applying the latter together with Theorem \ref{robustness-phi} we get
\begin{align*}
\E[|C(t)-C_{1,\varepsilon}(t)|^2] &= \E\Big[|(\widetilde{V}_{1,\varepsilon}(0)+\phi_{1,\varepsilon}^{FS}(t))-(\widetilde{V}(0)+\phi^{FS}(t))|^2]\\
&\leq 2 \big(\E\Big[|\widetilde{V}_{1,\varepsilon}(0)-\widetilde{V}(0)|^2]
+\E[|\phi_{1,\varepsilon}^{FS}(t)-\phi^{FS}(t)|^2]\big)\\
&\leq \widetilde{C}\E[|\widetilde{\xi} -\widetilde{\xi}^1_\varepsilon|^2]+ C'G^2(\varepsilon).
\end{align*}
\end{proof}
In the next section we present a second-candidate approximation to $S$ and we study the robustness of the quadratic hedging strategies.
\subsection{Second-candidate approximation to $S$}\label{secondapprox}
In the second candidate-approximation to $S$, we truncate the small jumps of the jump-diffusion and we replace them by the Brownian motion $W$ which we scale with the 
standard deviation of the small jumps. We obtain the following dynamics for the approximation  
\begin{align*}
\left\{ \begin{array}{ll}
dS_{0,\varepsilon}^{(1)}(t) &= S_{0,\varepsilon}^{(1)}(t)\Big\{a(t)dt +(b(t)+G(\varepsilon)\widetilde{\gamma}(t))dW(t)+\displaystyle\int_{|z|>\varepsilon}\gamma(t,z)\widetilde{N}(dt,dz)\Big\},\\
S_{0,\varepsilon}^{(1)}(0)&= S^{(1)}(0)=x.
\end{array}\right.
\end{align*}
The discounted price process is given by
\begin{align*}
d\widetilde{S}_{0,\varepsilon}(t) &= \widetilde{S}_{0,\varepsilon}(t)\Big\{(a(t)-r(t))dt +(b(t)+G(\varepsilon)\widetilde{\gamma}(t))dW(t)+\int_{|z|>\varepsilon}\gamma(t,z)\widetilde{N}(dt,dz)\Big\}.
\end{align*}
It is easy to show that $\widetilde{S}_{0,\varepsilon}(t)$ converges to $\widetilde{S}(t)$ in $L^2$ when $\varepsilon$ goes to $0$ with rate of convergence $G(\varepsilon)$.
\begin{remark}
Notice that in this paper we consider two types of approximations that are truly different.
In the candidate-approximation $S_{1,\varepsilon}$, the variance of the continuous part is given by $b^2(t)+G^2(\varepsilon)\widetilde{\gamma}^2(t)$, 
which is the same as the sum of the variance of the small jumps and the variance of the continuous part in $S$. We study this approximation by embedding the original model  solution into a larger filtration $\mathbb{G}$.
In the candidate-approximation $S_{0,\varepsilon}$\,, the variance of the continuous part is given by $(b(t)+G(\varepsilon)\widetilde{\gamma}(t))^2$,
which is different from the sum of the variance of the small jumps and the variance of the continuous part in $S$. 
In this approximation the solution is considered in the original filtration $\mathbb{F}$.  
Thus it is accordingly to the preferences of the trader to decide which candidate-approximation to choose. If one insists on working under the filtration $\mathbb{F}$, then 
one could also select a third candidate-approximation as a variation of $S_{0,\varepsilon}$\,. In fact one could also choose a function $\widetilde{G}(\varepsilon)$ in $S_{0,\varepsilon}$ in such a way that the approximation has the same variance as in the original process. 
In other words we can choose $\widetilde{G}(\varepsilon)$ such that
$$(b(t)+\widetilde{G}(\varepsilon)\widetilde{\gamma}(t))^2=b^2(t)+G^2(\varepsilon)\widetilde{\gamma}^2(t).$$
The latter is equivalent to choosing $\widetilde{G}^2(\varepsilon)= G(\varepsilon)(2b+G(\varepsilon)\widetilde{\gamma})$, which is clearly vanishing when $\varepsilon$ goes to $0$. Notice that from Theorem \ref{bsdej-robustness} the fact that $\widetilde{G}^2(\varepsilon)$ is vanishing when $\varepsilon$ goes to $0$ is enough to prove the robustness of the quadratic hedging strategies. 

We remark that our study has pointed out the role of the filtration in the study of approximations of the small jumps by a Brownian motion appropriately scaled. Indeed this aspect of the study was not discussed before.
\end{remark}
The local martingale $M_{0,\varepsilon}$ in the semimartingale decomposition of $\widetilde{S}_{0,\varepsilon}$ is given by
\begin{align*}
M_{0,\varepsilon}(t)&= \int_0^t (b(s)+G(\varepsilon)\widetilde{\gamma}(t))\widetilde{S}_{0,\varepsilon}(s)dW(s)+\int_0^t \int_{|z|>\varepsilon}\gamma(t,z)\widetilde{S}_{0,\varepsilon}(s)\widetilde{N}(dt,dz)\,.
\end{align*}
We define the process $\alpha_{0,\varepsilon}$ by 
\begin{align*}
\alpha_{0,\varepsilon}(t) :=\frac{a(t)-r(t)}{\widetilde{S}_{0,\varepsilon}(t) \Big\{(b(t) +G(\varepsilon)\widetilde{\gamma}(t))^2+ \int_{|z|>\varepsilon}\gamma^2(t,z)\ell(dz)\Big\}}\,, \qquad 0\leq t\leq T.
\end{align*}
Thus the mean-variance trade-off process $K_{0,\varepsilon}$ is given by 
\begin{align*}
K_{0,\varepsilon}(t) &= \int_0^t \alpha_{0,\varepsilon}^2(s)d\langle M_{0,\varepsilon}\rangle(s)= \int_0^t \frac{(a(s)-r(s))^2}{(b(s)+G(\varepsilon)\widetilde{\gamma}(s))^2  + \int_{|z|>\varepsilon}\gamma^2(s,z)\ell(dz)}ds\,.
\end{align*}
Hence we have to assume that $K_{0,\varepsilon}(t)$ is bounded uniformly by a positive constant to ensure the existence of the FS decomposition with respect to $\widetilde{S}_{0,\varepsilon}$ for any square integrable $\mathcal{F}_T$-measurable random variable. 

Let $\xi_\varepsilon^0$ be a square integrable contingent claim as a financial derivative with underlying $\widetilde{S}_{0,\varepsilon}$. We denote the discounted pay-off of $\xi^0_\varepsilon$ by $\widetilde{\xi}^0_\varepsilon= \frac{\xi^0_\varepsilon}{S^{(0)}(T)}$.
As we have seen before, for the signed minimal martingale measure to exist as a probability martingale measure, we have to assume that 
\begin{align*}
\widetilde{S}_{0,\varepsilon}(t)\alpha_{0,\varepsilon}(t)\gamma(t,z)>-1, \qquad \mbox { a.e. in }  (t,z,\omega).
\end{align*}
Define $\frac{d\widetilde{\Q}^{0,\varepsilon}}{d\mathbb{P}}_{|\mathcal{F}_t}:= \mathcal{E}\Big(\int_0^. \alpha_{0,\varepsilon}(s)dM_{0,\varepsilon}(s)\Big)_t$ and $\widetilde{V}_{0,\varepsilon}(t):=\E_{\widetilde{\Q}^{0,\varepsilon}}[\widetilde{\xi}^0_\varepsilon|\mathcal{F}_t]$. Following the same steps as before, we get the following equation for the value of the portfolio 
\begin{equation*}
\left\{ \begin{array}{ll}
d\widetilde{V}_{0,\varepsilon}(t)&=\widetilde{\pi}_{0,\varepsilon}(t)(a(t)-r(t))dt +\widetilde{\pi}_{0,\varepsilon}(t)(b(t)+G(\varepsilon)\widetilde{\gamma}(t))dW(t)\\
 &\qquad +\displaystyle \int_{|z|>\varepsilon}\widetilde{\pi}_{0,\varepsilon}(t)\gamma(t,z)\widetilde{N}(dt,dz) +d\phi^{FS}_{0,\varepsilon}(t),\\
\widetilde V_{0,\varepsilon}(T)&=\widetilde{\xi}^0_\varepsilon,
\end{array} \right.
\end{equation*}
where $\widetilde{\pi}_{0,\varepsilon}=\chi^{FS}_{0,\varepsilon} \widetilde{S}_{0,\varepsilon}$ and $\chi_{0,\varepsilon}^{FS} \in \Theta$. Since 
$\phi^{FS}_{0,\varepsilon}(T)$ is a $\mathcal{F}_T^\varepsilon$-measurable square integrable $\mathbb{P}$-martingale, then 
applying Theorem \ref{representation-theo} with $\mathbb{H}=\mathbb{F}^\varepsilon$ and using the martingale property of $\phi^{FS}_{0,\varepsilon}(T)$ we know that there exist stochastic integrands $Y_{\varepsilon}^{FS}$ and $Z_{\varepsilon}^{FS}$, such that
\begin{align}\label{phi-approximation1}
\phi^{FS}_{0,\varepsilon}(t)&=\E[\phi_{0,\varepsilon}^{FS}(T)]+ \int_0^t Y_{\varepsilon}^{FS}(s)dW(s) + \int_0^t\int_{|z|>\varepsilon}Z_\varepsilon^{FS}(s,z) \widetilde{N}(ds,dz).
\end{align}
Using the same arguments as for $\phi^{FS}_{1,\varepsilon}$ we can prove that $\E[\phi_{0,\varepsilon}^{FS}(T)]=\E[\phi_{0,\varepsilon}^{FS}(0)]=0$. 
In view of the orthogonality of $\phi^{FS}_{0,\varepsilon}$ with respect to $M_{0,\varepsilon}$, we have
\begin{align}\label{orthogonality-approximation1}
0=Y_{\varepsilon}^{FS}(t)[b(t)+G(\varepsilon)\widetilde{\gamma}(t)] +\int_{|z|\geq\varepsilon}Z_{\varepsilon}^{FS}(t,z)\gamma(t,z)\ell(dz).
\end{align}
The equation we obtain for the approximating problem is thus given by
\begin{equation}\label{portfolio-approximation1}
\left\{ \begin{array}{ll}
d\widetilde{V}_{0,\varepsilon}(t)&=\widetilde{\pi}_{0,\varepsilon}(t)(a(t)-r(t))dt +\big(\widetilde{\pi}_{0,\varepsilon}(t)[b(t)+G(\varepsilon)\widetilde{\gamma}(t)]+Y_{\varepsilon}^{FS}(t)\big)dW(t)\\
&\qquad \qquad  + \displaystyle\int_{|z|> \varepsilon}\big(\widetilde{\pi}_{0,\varepsilon}(t)\gamma(t,z)+Z^{FS}_\varepsilon(t,z)\big)\widetilde{N}(dt,dz),\\
\widetilde{V}_{0,\varepsilon}(T)&=\widetilde{\xi}^0_\varepsilon.
\end{array} \right.
\end{equation}
In the next lemma we prove that $\widetilde{V}_{0,\varepsilon}$ satisfies the set of equations of type \eqref{bsdes-approximation}.
\begin{lemma}\label{lipschitz-lemma1}
Assume that \eqref{lipschitz-condition} holds.
Let $\widetilde{V}_{0,\varepsilon}$ be given by \eqref{portfolio-approximation1}. Then 
$\widetilde{V}_{0,\varepsilon}$ satisfies a BSDEJ of type \eqref{bsdes-approximation}.
\end{lemma}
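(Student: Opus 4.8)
The plan is to mimic the proof of Lemma \ref{lipschitz-lemma}, now staying inside the filtration $\mathbb{F}^\varepsilon\subset\mathbb{F}$ (no auxiliary Brownian motion $B$), so that the resulting equation is of the type \eqref{bsdes-approximation} and not \eqref{bsdes-approximation1}. First I would read off from the forward dynamics \eqref{portfolio-approximation1} the natural candidate integrands
\begin{align*}
\widetilde{Y}_\varepsilon(t) &= \widetilde{\pi}_{0,\varepsilon}(t)\big(b(t)+G(\varepsilon)\widetilde{\gamma}(t)\big)+Y_\varepsilon^{FS}(t),\\
\widetilde{Z}_\varepsilon(t,z) &= \big(\widetilde{\pi}_{0,\varepsilon}(t)\gamma(t,z)+Z_\varepsilon^{FS}(t,z)\big)\mathbf{1}_{|z|>\varepsilon}(z),
\end{align*}
together with the driver $f^0(t,\widetilde{V}_{0,\varepsilon}(t),\widetilde{Y}_\varepsilon(t),\widetilde{Z}_\varepsilon(t,\cdot))=-\widetilde{\pi}_{0,\varepsilon}(t)(a(t)-r(t))$. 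The absence of a $dB$-term is exactly the structural feature that distinguishes this second candidate-approximation from the first.

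The next step is to turn $f^0$ into a genuine function of $(t,x,y,z)$ by eliminating $\widetilde{\pi}_{0,\varepsilon}$. Combining $\widetilde{Y}_\varepsilon$ and $\widetilde{Z}_\varepsilon$ as in the derivation of \eqref{pi} and using the orthogonality relation \eqref{orthogonality-approximation1} to cancel the $Y_\varepsilon^{FS}$ and $Z_\varepsilon^{FS}$ contributions, I expect to obtain
\begin{align*}
\widetilde{\pi}_{0,\varepsilon}(t)=\frac{1}{\kappa_{0,\varepsilon}(t)}\Big(\widetilde{Y}_\varepsilon(t)\big(b(t)+G(\varepsilon)\widetilde{\gamma}(t)\big)+\int_{|z|>\varepsilon}\widetilde{Z}_\varepsilon(t,z)\gamma(t,z)\ell(dz)\Big),
\end{align*}
where $\kappa_{0,\varepsilon}(t):=\big(b(t)+G(\varepsilon)\widetilde{\gamma}(t)\big)^2+\int_{|z|>\varepsilon}\gamma^2(t,z)\ell(dz)$ plays the role that $\kappa(t)$ played in Lemma \ref{lipschitz-lemma}. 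Substituting back yields an $f^0$ that is linear in $(y,z)$ with coefficient $\frac{a(t)-r(t)}{\kappa_{0,\varepsilon}(t)}$, whence $f^0(t,0,0,0)=0\in H^2_T$ and the $\mathbb{F}^\varepsilon$-progressive measurability are immediate.

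It then remains to check the uniform Lipschitz property of $f^0$. A Cauchy--Schwarz estimate of the bracket $\widetilde{Y}_\varepsilon(b+G(\varepsilon)\widetilde{\gamma})+\int_{|z|>\varepsilon}\widetilde{Z}_\varepsilon\gamma\,\ell(dz)$ against the norm $\sqrt{\kappa_{0,\varepsilon}(t)}$ reduces the Lipschitz constant to $\frac{|a(t)-r(t)|}{\sqrt{\kappa_{0,\varepsilon}(t)}}$, in perfect analogy with $\frac{|a(t)-r(t)|}{\sqrt{\kappa(t)}}$. \textbf{The main obstacle} is that, unlike in the first approximation where $K_{1,\varepsilon}=K$, here $\kappa_{0,\varepsilon}\neq\kappa$: expanding the square and using $G^2(\varepsilon)\widetilde{\gamma}^2(t)=\int_{|z|\leq\varepsilon}\gamma^2(t,z)\ell(dz)$ gives $\kappa_{0,\varepsilon}(t)=\kappa(t)+2b(t)G(\varepsilon)\widetilde{\gamma}(t)$, so the cross term may be negative and \eqref{lipschitz-condition} cannot be applied verbatim. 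To close the gap I would exploit the multiplicative structure $\gamma(t,z)=g(z)\widetilde{\gamma}(t)$: both $G(\varepsilon)\widetilde{\gamma}(t)$ and $\big(\int_{|z|>\varepsilon}\gamma^2(t,z)\ell(dz)\big)^{1/2}=|\widetilde{\gamma}(t)|\big(\int_{|z|>\varepsilon}g^2(z)\ell(dz)\big)^{1/2}$ are proportional to $|\widetilde{\gamma}(t)|$, so the $t$- and $\omega$-dependence cancels in the ratio $\kappa(t)/\kappa_{0,\varepsilon}(t)$ (the elementary inequality $s^2+w^2\geq 2|s|w$ bounds the cross term uniformly in $b$), leaving a bound by a finite constant depending on $\varepsilon$ only through $G(\varepsilon)/\big(\int_{|z|>\varepsilon}g^2(z)\ell(dz)\big)^{1/2}$, a quantity that in fact tends to $0$ as $\varepsilon\to0$; when $\widetilde{\gamma}(t)=0$ one has trivially $\kappa_{0,\varepsilon}(t)=\kappa(t)$. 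Together with \eqref{lipschitz-condition} this gives $\frac{|a(t)-r(t)|}{\sqrt{\kappa_{0,\varepsilon}(t)}}\leq C'$, so $f^0$ satisfies Assumptions \ref{lipschitz-assumption}(B) and $\widetilde{V}_{0,\varepsilon}$ solves a BSDEJ of type \eqref{bsdes-approximation}, as claimed.
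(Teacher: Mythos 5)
Your proposal follows the same skeleton as the paper's proof: read off $\widetilde{Y}_\varepsilon$, $\widetilde{Z}_\varepsilon$ and the driver from \eqref{portfolio-approximation1}, eliminate $\widetilde{\pi}_{0,\varepsilon}$ via the orthogonality relation \eqref{orthogonality-approximation1}, and check the Lipschitz property of the resulting $f^0$. However, your handling of the key step is more careful than the paper's, and in fact corrects it. The paper's equation \eqref{pi-approximation1} is essentially a copy of \eqref{pi-approximation}: it contains a process $\widetilde{\zeta}_\varepsilon$ which does not exist in this second approximation (there is no Brownian motion $B$ here), and it normalizes by $\kappa(t)$. The correct consequence of \eqref{y-tilde-processes2} and \eqref{orthogonality-approximation1} is the one you derive, namely
\begin{align*}
\widetilde{\pi}_{0,\varepsilon}(t)&=\frac{1}{\kappa_{0,\varepsilon}(t)}\Big(\widetilde{Y}_\varepsilon(t)\big(b(t)+G(\varepsilon)\widetilde{\gamma}(t)\big)+\int_{|z|>\varepsilon}\widetilde{Z}_\varepsilon(t,z)\gamma(t,z)\ell(dz)\Big),\\
\kappa_{0,\varepsilon}(t)&=\big(b(t)+G(\varepsilon)\widetilde{\gamma}(t)\big)^2+\int_{|z|>\varepsilon}\gamma^2(t,z)\ell(dz),
\end{align*}
so the natural Lipschitz constant is $|a(t)-r(t)|/\sqrt{\kappa_{0,\varepsilon}(t)}$ rather than $|a(t)-r(t)|/\sqrt{\kappa(t)}$; since $\kappa_{0,\varepsilon}=\kappa+2b\,G(\varepsilon)\widetilde{\gamma}$ can be strictly smaller than $\kappa$, condition \eqref{lipschitz-condition} does not apply verbatim. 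This is exactly the gap which the paper's phrase ``it is easy to show that $f^0$ is Lipschitz'' passes over (with the paper's $\kappa$-normalization the check would indeed be easy, because $\kappa_{0,\varepsilon}\leq 2\kappa$ always; it is the reverse comparison that is delicate). Your repair via the factorization $\gamma(t,z)=g(z)\widetilde{\gamma}(t)$ is sound: setting $H^2(\varepsilon)=\int_{|z|>\varepsilon}g^2(z)\ell(dz)$, the estimate $b^2\leq 2\big(b+G(\varepsilon)\widetilde{\gamma}\big)^2+2G^2(\varepsilon)\widetilde{\gamma}^2$ yields $\kappa(t)\leq \max\big(2,\,1+3G^2(\varepsilon)/H^2(\varepsilon)\big)\,\kappa_{0,\varepsilon}(t)$, a bound independent of $(t,\omega)$, and $G(\varepsilon)/H(\varepsilon)\to 0$ as $\varepsilon\to 0$, so the resulting Lipschitz constants are even uniform in small $\varepsilon$ (which matters for the robustness results that invoke \eqref{condition-f-zero} afterwards). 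The only caveat worth recording is that your argument needs $H(\varepsilon)>0$, i.e.\ that $\ell$ charges $\{|z|>\varepsilon\}$: in the degenerate case $H(\varepsilon)=0$ with $b=-G(\varepsilon)\widetilde{\gamma}\neq 0$ one has $\kappa_{0,\varepsilon}=0<\kappa$ and no bound is possible, so one should either exclude this case or restrict to $\varepsilon$ small enough, which is harmless for the asymptotic analysis.
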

\begin{proof} 
We rewrite equation \eqref{portfolio-approximation1} as
\begin{equation*}
\left\{ \begin{array}{ll}
d\widetilde{V}_{0,\varepsilon}(t)&=-f^0(t,\widetilde{V}_{0,\varepsilon}(t), \widetilde{Y}_\varepsilon(t),\widetilde{Z}_\varepsilon(t,.))+\widetilde{Y}_{\varepsilon}(t)dW(t) 
+\displaystyle \int_{\R_0}\widetilde{Z}_\varepsilon(t,z)\widetilde{N}(dt,dz),\\
\widetilde{V}_{0,\varepsilon}(T)&=\widetilde{\xi}^0_\varepsilon,
\end{array} \right.
\end{equation*}
where we introduce the processes $\widetilde{Y}_{\varepsilon}$, $\widetilde{Z}_\varepsilon$ and the function $f^0$ by
\begin{align}\label{y-tilde-processes2}
\widetilde{Y}_{\varepsilon}(t)&= \widetilde{\pi}_{0,\varepsilon}(t)[b(t)+G(\varepsilon)\widetilde{\gamma}(t)]+Y_{\varepsilon}^{FS}(t),\nonumber\\
\widetilde{Z}_\varepsilon(t,z)&=(\widetilde{\pi}_{0,\varepsilon}(t)\gamma(t,z)+Z^{FS}_\varepsilon(t,z))\mathbf{1}_{|z|>\varepsilon}(z),\\
f^0(t,\widetilde{V}_{0,\varepsilon}(t), \widetilde{Y}_\varepsilon(t),\widetilde{Z}_\varepsilon(t,.))&=-\widetilde{\pi}_{0,\varepsilon}(t)(a(t)-r(t)).\nonumber
\end{align}
With the same arguments as above and using \eqref{orthogonality-approximation1} we can prove that
\begin{align}\label{pi-approximation1}
\widetilde{\pi}_{0,\varepsilon}(t)=\frac{1}{\kappa(t)}\Big\{\widetilde{Y}_\varepsilon(t)b(t)+\widetilde{\zeta}_\varepsilon(t)G(\varepsilon)\widetilde{\gamma}(t)+\int_{\R_0}
\widetilde{Z}_\varepsilon(t,z)\gamma(t,z)\ell(dz)\Big\}.
\end{align}
Hence
\begin{align*}
f^0(t,\widetilde{V}(t), \widetilde{Y}_{\varepsilon}(t),\widetilde{Z}_\varepsilon(t,.))&= - \frac{a(t)-r(t)}{\kappa(t)}\Big(\widetilde{Y}_{\varepsilon}(t)[b(t)+G(\varepsilon)\widetilde{\gamma}(t)]+\int_{\R_0}\widetilde{Z}_\varepsilon(t,z)\gamma(t,z)\ell(dz)\Big)
\end{align*}
and it is easy to show that $f^0$ is Lipschitz when \eqref{lipschitz-condition} holds. This proves the statement.
\end{proof}
Now we present the following theorem in which we prove the robustness of the value of the portfolio.
\begin{theorem}
Assume that \eqref {lipschitz-condition} holds.
Let $\widetilde{V}$, $\widetilde{V}_{0,\varepsilon}$ be given by \eqref{portfolio}, \eqref{portfolio-approximation1}, respectively. Then we have
$$\E\Big[\sup_{0\leq t\leq T}|\widetilde{V}(t) - \widetilde{V}_{0,\varepsilon}(t)|^2\Big] \leq C\E[|\widetilde{\xi} -\widetilde{\xi}^0_\varepsilon|^2].$$
\end{theorem}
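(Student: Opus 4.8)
The plan is to obtain this statement as the $\rho=0$ counterpart of Theorem~\ref{portfolio-robustness}, that is, as a direct application of the strong convergence Proposition~\ref{robustness-X} with the index $\rho=0$. Two of the three ingredients are already in place: by Lemma~\ref{lipschitz-lemma} the value process $\widetilde V$ solves a BSDEJ of type \eqref{bsdes} with some driver $f$, and by Lemma~\ref{lipschitz-lemma1} the approximating value $\widetilde V_{0,\varepsilon}$ solves a BSDEJ of type \eqref{bsdes-approximation} with driver $f^0$; Proposition~\ref{robustness-X} then already furnishes $\E[\sup_{0\le t\le T}|X(t)-X^0_\varepsilon(t)|^2]\le C\,\E[|\xi-\xi^0_\varepsilon|^2]$ provided the comparison condition \eqref{condition-f-zero} holds. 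Thus the whole proof reduces to checking \eqref{condition-f-zero} for the pair $(f,f^0)$; with that in hand, applying Proposition~\ref{robustness-X} with $X=\widetilde V$, $X^0_\varepsilon=\widetilde V_{0,\varepsilon}$, $\xi=\widetilde\xi$ and $\xi^0_\varepsilon=\widetilde\xi^0_\varepsilon$ delivers the asserted estimate.

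To verify \eqref{condition-f-zero} I would substitute the explicit drivers produced in the two lemmas. Setting $h(t)=(a(t)-r(t))/\kappa(t)$, the driver $f$ acts on its arguments $(y,z)$ as $-h(t)$ times the sum of $y\,b(t)$ and the integral of $z$ against $\gamma(t,\cdot)$, while $f^0$ is the same expression with the diffusion coefficient $b(t)$ replaced by the augmented coefficient $b(t)+G(\varepsilon)\widetilde\gamma(t)$. Forming $f(t,x_1,y_1,z_1)-f^0(t,x_2,y_2,z_2)$, I would collect the $W$-integrand part and the jump part separately, bound the jump integral by the Cauchy--Schwarz inequality against $\big(\int_{\R_0}\gamma^2(t,z)\,\ell(dz)\big)^{1/2}\le\sqrt{\kappa(t)}$, and factor out $|h(t)|\sqrt{\kappa(t)}=|a(t)-r(t)|/\sqrt{\kappa(t)}$, which is bounded by $C$ by assumption \eqref{lipschitz-condition}. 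This yields a uniform Lipschitz bound in $|y_1-y_2|$ and $\|z_1-z_2\|$, as required.

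The step demanding the most care, and the true point of departure from the $\rho=1$ case of Theorem~\ref{portfolio-robustness}, is the extra contribution created by the factor $G(\varepsilon)\widetilde\gamma(t)$ that now sits in the diffusion coefficient of $\widetilde S_{0,\varepsilon}$. In the first approximation the analogous term was attached to the independent Brownian motion $B$ and was carried by the fourth slot $\zeta$ of $f^1$, so it could be absorbed through condition \eqref{stronger-Lipcshitz-condition)} together with the control of $\E\int_t^T|\zeta^\rho_\varepsilon(s)|^2ds$ coming from Theorem~\ref{bsdej-robustness}. Here the scaling is bound directly to the $W$-integrand and there is no such $\zeta$-slot, so one must instead control this term directly: the natural route is to observe that $b(t)+G(\varepsilon)\widetilde\gamma(t)$ stays uniformly bounded and that $G(\varepsilon)\to 0$, which would require a boundedness property of $\widetilde\gamma$ of the kind later used in \eqref{gamma-kappa} rather than merely \eqref{lipschitz-condition}. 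Establishing that this scaling term is genuinely absorbed into the constant $C$ of \eqref{condition-f-zero}, rather than leaving a residual $O(G(\varepsilon))$ perturbation, is the main obstacle; once it is resolved, \eqref{condition-f-zero} holds and Proposition~\ref{robustness-X} with $\rho=0$ completes the argument.
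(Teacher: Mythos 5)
Your plan is, in outline, exactly the paper's own proof: the paper's argument for this theorem is literally the reduction you describe (``show that $f^0$ satisfies \eqref{condition-f-zero} following the same steps as in Theorem \ref{portfolio-robustness}, then apply Proposition \ref{robustness-X}''). However, the obstacle you flag in your last paragraph is not merely ``the step demanding the most care'' --- it is a genuine gap, and it cannot be closed in the direction you propose. Indeed, writing $h(t)=(a(t)-r(t))/\kappa(t)$, the coefficient multiplying the argument $y$ in $f$ is $-h(t)b(t)$, while in $f^0$ it is $-\tfrac{a(t)-r(t)}{\kappa_{0,\varepsilon}(t)}\big(b(t)+G(\varepsilon)\widetilde\gamma(t)\big)$ with $\kappa_{0,\varepsilon}(t)=(b(t)+G(\varepsilon)\widetilde\gamma(t))^2+\int_{|z|>\varepsilon}\gamma^2(t,z)\ell(dz)$ (the jump-integral parts have identical form). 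These two coefficients differ by a quantity of order $G(\varepsilon)\widetilde\gamma(t)$, so the difference $f(t,x_1,y_1,z_1)-f^0(t,x_2,y_2,z_2)$ contains a term proportional to $y_2$ \emph{alone}, not to $y_1-y_2$. Taking $x_1=x_2$, $z_1=z_2$, $y_1=y_2=y$ makes the right-hand side of \eqref{condition-f-zero} zero while the left-hand side grows linearly in $|y|$: condition \eqref{condition-f-zero} is simply false for this pair, for every constant $C$, bounded $\widetilde\gamma$ or not. An assumption of type \eqref{gamma-kappa} cannot repair this, since no boundedness hypothesis turns a non-difference term into a difference. This is precisely the structural asymmetry with the $\rho=1$ case: there the perturbing coefficient $G(\varepsilon)\widetilde\gamma$ multiplies the fourth argument $\zeta$, and condition \eqref{stronger-Lipcshitz-condition)} explicitly tolerates the non-difference term $|\zeta|$, which Theorem \ref{bsdej-robustness} then controls on its left-hand side; condition \eqref{condition-f-zero} affords no such slack.

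The workable repair is not to force \eqref{condition-f-zero} but to rerun the It\^{o} argument of Theorem \ref{bsdej-robustness} for this particular pair keeping the residual: group the driver difference along the solutions as $-h(t)\big[(\widetilde Y(t)-\widetilde Y_\varepsilon(t))(b(t)+G(\varepsilon)\widetilde\gamma(t))-\widetilde Y(t)G(\varepsilon)\widetilde\gamma(t)+\int_{\R_0}(\widetilde Z(t,z)-\widetilde Z_\varepsilon(t,z))\gamma(t,z)\ell(dz)\big]$, which is the grouping the paper itself uses for $\widetilde\pi$ in Theorem \ref{robustness-pi1}. The difference terms are handled exactly as before under \eqref{lipschitz-condition}, while the residual is bounded by $C\,G(\varepsilon)\,(|\widetilde\gamma(t)|/\sqrt{\kappa(t)})\,|\widetilde Y(t)|$, where $\widetilde\gamma^2(t)/\kappa(t)\le 1/\int_{\R_0}g^2(z)\ell(dz)$ holds automatically because $\gamma=g\widetilde\gamma$ (or use \eqref{kappa1}); Young's inequality and Lemma \ref{solution-boundedness} then yield an extra additive contribution $C'G^2(\varepsilon)\E[\widetilde\xi^2]$. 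Hence what this route actually proves is $\E\big[\sup_{0\le t\le T}|\widetilde V(t)-\widetilde V_{0,\varepsilon}(t)|^2\big]\le C\E[|\widetilde\xi-\widetilde\xi^0_\varepsilon|^2]+C'G^2(\varepsilon)$, the same shape as Theorem \ref{robustness-pi1} and the $\phi^{FS}_{0,\varepsilon}$ robustness result --- not the clean bound in the statement. The clean bound cannot hold as written: take $\widetilde\xi^0_\varepsilon=\widetilde\xi$ depending on $W$ alone, so the right-hand side vanishes, while $\widetilde V$ and $\widetilde V_{0,\varepsilon}$ solve BSDEJs with different drivers (equivalently, are conditional expectations under two different minimal martingale measures) and differ in general. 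In short, your diagnosis of the $O(G(\varepsilon))$ residual is correct and in fact exposes a gap in the paper's own one-line proof; but your proposal leaves the decisive step open, and the resolution is to accept the residual and carry it into the final estimate rather than to absorb it into the Lipschitz constant.
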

\begin{proof}
Following the same steps as in the proof of Theorem \ref{portfolio-robustness}, we can show that $f^0$ satisfies condition \eqref{condition-f-zero} and we prove the statement by applying Proposition \ref{robustness-X}. 
\end{proof}
In the next theorem we prove the robustness of the amount of wealth to invest in a locally risk-minimizing strategy.
\begin{theorem}\label{robustness-pi1}
Assume that \eqref {lipschitz-condition} holds and that for all $t\in [0,T]$
\begin{align}\label{kappa1}
\inf_{t\leq s \leq T} \kappa(s) \geq K, \qquad \sup_{t\leq s\leq T}\widetilde{\gamma}^2(s) \leq \widetilde{K}, \qquad \mathbb{P}-a.s.,
\end{align}
where $K$ and $\widetilde{K}$ are strictly positive constants.
Let $\widetilde{\pi}$, $\widetilde{\pi}_{0,\varepsilon}$ be given by \eqref{pi}, \eqref{pi-approximation1}, respectively. Then 
$$\E\Big[\int_t^T|\widetilde{\pi}(s)-\widetilde{\pi}_{0,\varepsilon}(s)|^2ds\Big] \leq C\E[|\widetilde{\xi} -\widetilde{\xi}^0_\varepsilon|^2] +C'G^2(\varepsilon),$$
where $C$ and $C'$ are positive constants.
\end{theorem}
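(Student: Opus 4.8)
The plan is to follow the template of the proof of Theorem~\ref{robustness-pi}, writing $\widetilde{\pi}-\widetilde{\pi}_{0,\varepsilon}$ through the representations \eqref{pi} and \eqref{pi-approximation1} and controlling each piece by Theorem~\ref{bsdej-robustness} together with the a~priori estimate of Lemma~\ref{solution-boundedness}. The decisive new feature, compared with the first-candidate case where $\widetilde{\pi}$ and $\widetilde{\pi}_{1,\varepsilon}$ were both normalised by the \emph{same} factor $1/\kappa(t)$, is that combining \eqref{y-tilde-processes2} with the orthogonality relation \eqref{orthogonality-approximation1} (exactly as in Lemma~\ref{lipschitz-lemma1}) normalises $\widetilde{\pi}_{0,\varepsilon}$ by
\[
\kappa_{0,\varepsilon}(t)=\bigl(b(t)+G(\varepsilon)\widetilde{\gamma}(t)\bigr)^2+\int_{|z|>\varepsilon}\gamma^2(t,z)\ell(dz),
\]
rather than by $\kappa(t)$. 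Using $\gamma(t,z)=g(z)\widetilde{\gamma}(t)$ and \eqref{variance-term} to write $\int_{|z|\le\varepsilon}\gamma^2(t,z)\ell(dz)=G^2(\varepsilon)\widetilde{\gamma}^2(t)$, a one-line computation gives $\kappa_{0,\varepsilon}(t)=\kappa(t)+2b(t)G(\varepsilon)\widetilde{\gamma}(t)$, so the two denominators differ by a term of order $G(\varepsilon)$. This mismatch is exactly what produces the extra summand $C'G^2(\varepsilon)$ that was absent in Theorem~\ref{robustness-pi}.

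Next I would split, writing $\mathcal N$ and $\mathcal N_\varepsilon$ for the numerators of $\widetilde\pi$ and $\widetilde\pi_{0,\varepsilon}$,
\[
\widetilde{\pi}(t)-\widetilde{\pi}_{0,\varepsilon}(t)=\frac{1}{\kappa(t)}\bigl(\mathcal N(t)-\mathcal N_\varepsilon(t)\bigr)+\Bigl(\tfrac{1}{\kappa(t)}-\tfrac{1}{\kappa_{0,\varepsilon}(t)}\Bigr)\mathcal N_\varepsilon(t).
\]
In the first term the numerator difference is $(\widetilde Y-\widetilde Y_\varepsilon)b-G(\varepsilon)\widetilde{\gamma}\,\widetilde Y_\varepsilon+\int_{\R_0}(\widetilde Z-\widetilde Z_\varepsilon)\gamma\,\ell(dz)$; applying Cauchy--Schwarz with $\int_{\R_0}\gamma^2\ell(dz)\le\kappa$, the lower bound $\kappa\ge K$ and the upper bound $\widetilde{\gamma}^2\le\widetilde K$ from \eqref{kappa1}, the genuine increments $\widetilde Y-\widetilde Y_\varepsilon$ and $\widetilde Z-\widetilde Z_\varepsilon$ are absorbed into $\E[|\widetilde\xi-\widetilde\xi^0_\varepsilon|^2]$ via Theorem~\ref{bsdej-robustness}, while the leftover piece $G(\varepsilon)\widetilde{\gamma}\,\widetilde Y_\varepsilon$ is of order $G(\varepsilon)$ and contributes $O(G^2(\varepsilon))$ once $\E[\int_t^T\widetilde Y_\varepsilon^2\,ds]$ is bounded. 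For the second term I would use $\mathcal N_\varepsilon=\kappa_{0,\varepsilon}\widetilde\pi_{0,\varepsilon}$ to rewrite it as $\frac{2b(t)G(\varepsilon)\widetilde{\gamma}(t)}{\kappa(t)}\widetilde\pi_{0,\varepsilon}(t)$, which — since $|b|/\kappa\le 1/\sqrt{K}$ and $\widetilde{\gamma}^2\le\widetilde K$ — is again $O(G(\varepsilon))$ pointwise, hence $O(G^2(\varepsilon))$ after integration, provided $\E[\int_t^T\widetilde\pi_{0,\varepsilon}^2\,ds]$ is bounded uniformly in $\varepsilon$.

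It then remains to supply the a~priori bounds. These come from the analogue of Lemma~\ref{solution-boundedness} for the approximating BSDEJ \eqref{portfolio-approximation1} (which is of type \eqref{bsdes-approximation} by Lemma~\ref{lipschitz-lemma1}), giving $\E[\int_t^T\widetilde Y_\varepsilon^2\,ds]+\E[\int_t^T\int_{\R_0}\widetilde Z_\varepsilon^2\ell(dz)\,ds]\le C\,\E[(\widetilde\xi^0_\varepsilon)^2]$; feeding this into the representation of $\widetilde\pi_{0,\varepsilon}$ together with $\int_{|z|>\varepsilon}\gamma^2\ell(dz)\le\kappa_{0,\varepsilon}$ yields $\E[\int_t^T\widetilde\pi_{0,\varepsilon}^2\,ds]\le C\,\E[(\widetilde\xi^0_\varepsilon)^2]$. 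Since $\widetilde\xi^0_\varepsilon\to\widetilde\xi$ in $L^2_T$, the right-hand side is bounded uniformly in $\varepsilon$ and can be absorbed into $C'$. Assembling the three contributions then delivers $\E[\int_t^T|\widetilde\pi-\widetilde\pi_{0,\varepsilon}|^2\,ds]\le C\,\E[|\widetilde\xi-\widetilde\xi^0_\varepsilon|^2]+C'G^2(\varepsilon)$.

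I expect the main obstacle to be exactly this denominator mismatch: bounding $\E[\int_t^T\widetilde\pi_{0,\varepsilon}^2\,ds]$ needs a lower bound on $\kappa_{0,\varepsilon}$, which is not immediate because $\kappa$ is not assumed bounded above here. The remedy is the identity $\kappa_{0,\varepsilon}=\kappa+2bG(\varepsilon)\widetilde{\gamma}$ together with $|2bG(\varepsilon)\widetilde{\gamma}|\le 2\sqrt{\kappa}\,G(\varepsilon)\sqrt{\widetilde K}$: for $\varepsilon$ small enough (uniformly, since $G(\varepsilon)\to0$) one gets $\kappa_{0,\varepsilon}\ge\kappa/2\ge K/2$, which is all that is needed to close the estimate, so that the robustness assertion holds in the limit $\varepsilon\to0$ that is of interest.
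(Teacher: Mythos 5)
Your proof is correct, but it is not the paper's proof --- in fact it quietly repairs a gap in it. The paper's own argument is a single computation: it takes the printed identity \eqref{pi-approximation1} at face value, so that $\widetilde{\pi}$ and $\widetilde{\pi}_{0,\varepsilon}$ are normalised by the \emph{same} factor $1/\kappa(s)$, writes the difference as one fraction whose numerator is $(\widetilde Y-\widetilde Y_\varepsilon)(b+G(\varepsilon)\widetilde{\gamma})-\widetilde Y\, G(\varepsilon)\widetilde{\gamma}+\int_{\R_0}(\widetilde Z-\widetilde Z_\varepsilon)\gamma\,\ell(dz)$, and then controls the increments by Theorem \ref{bsdej-robustness} and the leftover term $\widetilde Y G(\varepsilon)\widetilde{\gamma}$ by Lemma \ref{solution-boundedness} applied to the \emph{original} equation, i.e.\ $\E[\int_t^T\widetilde Y^2(s)ds]\le C\E[\widetilde\xi^2]$; that single leftover is the whole source of its $C'G^2(\varepsilon)$. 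You instead rederive the representation of $\widetilde{\pi}_{0,\varepsilon}$ from \eqref{y-tilde-processes2} and \eqref{orthogonality-approximation1} and find the normalisation $\kappa_{0,\varepsilon}=\kappa+2bG(\varepsilon)\widetilde{\gamma}$ --- and you are right: carrying out the ``same arguments'' invoked in Lemma \ref{lipschitz-lemma1} genuinely produces $\kappa_{0,\varepsilon}$, not $\kappa$, so \eqref{pi-approximation1} as printed (with $1/\kappa$ and a spurious $\widetilde{\zeta}_\varepsilon$) does not follow from the orthogonality relation, and the paper's one-fraction decomposition is unjustified as stated. Your split into a numerator difference plus the mismatch term $\tfrac{2bG(\varepsilon)\widetilde{\gamma}}{\kappa}\widetilde{\pi}_{0,\varepsilon}$ is the honest version; the price is three extra ingredients, all legitimate: the analogue of Lemma \ref{solution-boundedness} for the approximating BSDEJ (its proof carries over verbatim once $f^0$ is Lipschitz), uniform boundedness of $\E[(\widetilde\xi^0_\varepsilon)^2]$ from the $L^2$-convergence of the terminal conditions, and the lower bound $\kappa_{0,\varepsilon}\ge\kappa/2\ge K/2$ for $\varepsilon$ small, which is harmless since the statement is only of interest as $\varepsilon\to0$. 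Both routes land on the same estimate, with your two $O(G(\varepsilon))$ leftovers ($G(\varepsilon)\widetilde{\gamma}\widetilde Y_\varepsilon$ and the denominator mismatch) playing the role of the paper's single $G(\varepsilon)\widetilde{\gamma}\widetilde Y$.

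One caveat you share with the paper rather than create: invoking Theorem \ref{bsdej-robustness} for the pair $(\widetilde V,\widetilde V_{0,\varepsilon})$ presupposes condition \eqref{condition-f-zero}, and for these two drivers (under either normalisation) that condition only holds up to an additive term of order $G(\varepsilon)|y_2|$, which is not a difference term. Strictly, one should re-run the proof of Theorem \ref{bsdej-robustness} allowing for this extra term; by the same a priori bounds you already use, it only contributes another $O(G^2(\varepsilon))$ and leaves the claimed inequality intact.
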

\begin{proof}
We have 
\begin{align*}
|\widetilde{\pi}(s)-\widetilde{\pi}_{1,\varepsilon}(s)|^2&=\frac{1}{\kappa^2(s)}\Big\{(\widetilde{Y}(s)-\widetilde{Y}_{\varepsilon}(s))(b(s)+G(\varepsilon)\widetilde{\gamma}(s))-\widetilde{Y}(s)G(\varepsilon)\widetilde{\gamma}(s)\\
&\qquad +\int_{\R_0}(\widetilde{Z}(s,z)-\widetilde{Z}_\varepsilon(s,z))\gamma(s,z)\ell(dz)\Big\}^2\\
& \leq \frac{C}{\kappa(s)}\Big\{|\widetilde{Y}(s)-\widetilde{Y}_{\varepsilon}(s)|^2 +G^2(\varepsilon)\widetilde{\gamma}^2(s)|\widetilde{Y}(s)|^2\\
&\qquad +\int_{\R_0}|\widetilde{Z}(s,z)-\widetilde{Z}_\varepsilon(s,z)|^2\ell(dz)\Big\},
\end{align*}
where $C$ is a positive constant. Hence from Theorem \ref{bsdej-robustness}, we deduce 
\begin{align*}
\E\Big[\int_t^T|\widetilde{\pi}(s)-\widetilde{\pi}_{1,\varepsilon}(s)|^2ds\Big]
& \leq \frac{C}{\inf_{t \leq s \leq T}\kappa(s)}\Big\{\E\Big[ \int_t^T |\widetilde{Y}(s)-\widetilde{Y}_\varepsilon(s)|^2ds\Big]\\
&\qquad +G^2(\varepsilon)\sup_{t\leq s\leq T}\widetilde{\gamma}^2(s)\E\Big[ \int_t^T|\widetilde{Y}(s)|^2ds\Big] \\
&\qquad +\E\Big[ \int_t^T\int_{\R_0}|\widetilde{Z}(s,z)-\widetilde{Z}_\varepsilon(s,z)|^2\ell(dz)ds\Big]\Big\}\\
&\leq \widetilde{C}\E[|\widetilde{\xi}-\widetilde{\xi}^0_\varepsilon|^2]+C'G^2(\varepsilon)\E[|\widetilde{\xi}|^2]
\end{align*}
and we prove the statement.
\end{proof}
In the next theorem we deal with the robustness of the process $\phi^{FS}$.
\begin{theorem}
Assume that \eqref {lipschitz-condition}  and \eqref{kappa1} hold.
Let $\phi^{FS}$, $\phi_{0,\varepsilon}^{FS}$ be given by \eqref{phi}, \eqref{phi-approximation1}, respectively. Then for all $t \in[0,T]$ we have 
$$\E\Big[|\phi^{FS}(t)-\phi_{0,\varepsilon}^{FS}(t)|^2\Big] \leq C\E[|\widetilde{\xi} -\widetilde{\xi}^0_\varepsilon|^2]+C'G^2(\varepsilon),$$
where $C$ and $C'$ are positive constants.
\end{theorem}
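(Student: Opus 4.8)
The plan is to follow the template of the proof of Theorem~\ref{robustness-phi}, the only structural simplification being that here the approximation lives on the original filtration $\mathbb{F}$, so there is no auxiliary Brownian motion $B$ and hence no $Y^{FS}_{2,\varepsilon}$-contribution. From the martingale representations \eqref{phi} and \eqref{phi-approximation1}, and extending $Z^{FS}_\varepsilon$ by $0$ on $\{|z|\le\varepsilon\}$,
\begin{align*}
\phi^{FS}(t)-\phi^{FS}_{0,\varepsilon}(t)&=\int_0^t\big(Y^{FS}(s)-Y^{FS}_\varepsilon(s)\big)dW(s)\\
&\qquad +\int_0^t\int_{\R_0}\big(Z^{FS}(s,z)-Z^{FS}_\varepsilon(s,z)\big)\widetilde{N}(ds,dz).
\end{align*}
By the It\^o isometry and the orthogonality of the integrators $W$ and $\widetilde{N}$, the quantity $\E[|\phi^{FS}(t)-\phi^{FS}_{0,\varepsilon}(t)|^2]$ splits into $\E[\int_0^t|Y^{FS}-Y^{FS}_\varepsilon|^2ds]$ and $\E[\int_0^t\int_{\R_0}|Z^{FS}-Z^{FS}_\varepsilon|^2\ell(dz)ds]$, each dominated by the corresponding integral over $[0,T]$; estimating these two quantities is the whole task.

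For the $W$-part I would insert the identities \eqref{y-tilde} and \eqref{y-tilde-processes2}, namely $Y^{FS}=\widetilde{Y}-\widetilde{\pi}b$ and $Y^{FS}_\varepsilon=\widetilde{Y}_\varepsilon-\widetilde{\pi}_{0,\varepsilon}(b+G(\varepsilon)\widetilde{\gamma})$, giving
$$Y^{FS}-Y^{FS}_\varepsilon=(\widetilde{Y}-\widetilde{Y}_\varepsilon)-b\,(\widetilde{\pi}-\widetilde{\pi}_{0,\varepsilon})+G(\varepsilon)\widetilde{\gamma}\,\widetilde{\pi}_{0,\varepsilon}.$$
After the elementary inequality $(a+b+c)^2\le 3(a^2+b^2+c^2)$, the first term is handled by Theorem~\ref{bsdej-robustness} (with $\rho=0$), the second by $b^2\le\kappa$ together with an upper bound on $\kappa$ and Theorem~\ref{robustness-pi1}, and the third by $G^2(\varepsilon)\sup_s\widetilde{\gamma}^2(s)\,\E[\int_t^T\widetilde{\pi}_{0,\varepsilon}^2(s)ds]$. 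The last factor is bounded uniformly in $\varepsilon$ because $\widetilde{\pi}_{0,\varepsilon}=\widetilde{\pi}-(\widetilde{\pi}-\widetilde{\pi}_{0,\varepsilon})$, with $\E[\int_t^T\widetilde{\pi}^2ds]\le C\E[\widetilde{\xi}^2]$ following from \eqref{pi} and Lemma~\ref{solution-boundedness}, and $\E[\int_t^T|\widetilde{\pi}-\widetilde{\pi}_{0,\varepsilon}|^2ds]$ controlled by Theorem~\ref{robustness-pi1}. Invoking $\sup_s\widetilde{\gamma}^2\le\widetilde{K}$ from \eqref{kappa1}, the entire $W$-contribution is bounded by $C\E[|\widetilde{\xi}-\widetilde{\xi}^0_\varepsilon|^2]+C'G^2(\varepsilon)$.

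The $\widetilde{N}$-part is the delicate step and is precisely where the rate $G^2(\varepsilon)$ is generated. Using $\widetilde{Z}=\widetilde{\pi}\gamma+Z^{FS}$ and $\widetilde{Z}_\varepsilon=(\widetilde{\pi}_{0,\varepsilon}\gamma+Z^{FS}_\varepsilon)\mathbf{1}_{|z|>\varepsilon}$, I would split $\int_{\R_0}|Z^{FS}-Z^{FS}_\varepsilon|^2\ell(dz)$ at $|z|=\varepsilon$: on $\{|z|>\varepsilon\}$ one has $Z^{FS}-Z^{FS}_\varepsilon=(\widetilde{Z}-\widetilde{Z}_\varepsilon)-\gamma(\widetilde{\pi}-\widetilde{\pi}_{0,\varepsilon})$, whereas on $\{|z|\le\varepsilon\}$ both $\widetilde{Z}_\varepsilon$ and $Z^{FS}_\varepsilon$ vanish, so that $Z^{FS}=(\widetilde{Z}-\widetilde{Z}_\varepsilon)-\widetilde{\pi}\gamma$. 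Recalling $\gamma(t,z)=g(z)\widetilde{\gamma}(t)$, the crucial point is that $\int_{|z|>\varepsilon}\gamma^2\ell(dz)\le\widetilde{\gamma}^2(t)G^2(\infty)$ stays bounded while $\int_{|z|\le\varepsilon}\gamma^2\ell(dz)=\widetilde{\gamma}^2(t)G^2(\varepsilon)$ by \eqref{variance-term}; the latter multiplies $\widetilde{\pi}^2$ and, combined with $\sup_s\widetilde{\gamma}^2\le\widetilde{K}$ and $\E[\int_t^T\widetilde{\pi}^2ds]\le C\E[\widetilde{\xi}^2]$, yields exactly the remainder $C'G^2(\varepsilon)$. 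The two remaining pieces are absorbed into $\E[|\widetilde{\xi}-\widetilde{\xi}^0_\varepsilon|^2]$ by Theorem~\ref{bsdej-robustness} and Theorem~\ref{robustness-pi1}. Adding the $W$- and $\widetilde{N}$-contributions via the It\^o isometry then gives the claim.

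The step I expect to be the main obstacle is the bookkeeping of this small-jump split: one must keep track that $\widetilde{Z}_\varepsilon$ and $Z^{FS}_\varepsilon$ are supported on $\{|z|>\varepsilon\}$, so that the region $\{|z|\le\varepsilon\}$ contributes only through $Z^{FS}=\widetilde{Z}-\widetilde{\pi}\gamma$, and to identify this leftover $\int_{|z|\le\varepsilon}\gamma^2\ell(dz)=\widetilde{\gamma}^2G^2(\varepsilon)$ — the variance of the truncated jumps — as the source of the non-$L^2$ remainder. A secondary technical point, shared with Theorem~\ref{robustness-phi}, is that controlling the $b$-weighted difference in the $W$-term needs an upper bound on $\kappa$ through $b^2\le\kappa$, the analogue of the assumption $\sup_s\kappa(s)\le\widehat{K}$ used there.
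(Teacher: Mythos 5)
Your proposal is correct and follows essentially the same route as the paper: express $Y^{FS}-Y^{FS}_\varepsilon$ and $Z^{FS}-Z^{FS}_\varepsilon$ through \eqref{y-tilde} and \eqref{y-tilde-processes2}, control them via Theorem \ref{bsdej-robustness}/Proposition \ref{robustness-X}, Theorem \ref{robustness-pi1} and Lemma \ref{solution-boundedness}, identify the truncated-jump variance $\widetilde{\gamma}^2G^2(\varepsilon)$ (together with the $G(\varepsilon)\widetilde{\gamma}$ entering the $W$-coefficient) as the source of the $C'G^2(\varepsilon)$ remainder, and assemble by the It\^o isometry. You are in fact slightly more careful than the paper on two points it glosses over: the explicit treatment of the extra term $G(\varepsilon)\widetilde{\gamma}\,\widetilde{\pi}_{0,\varepsilon}$ in the $W$-component, and the observation that weighting $\widetilde{\pi}-\widetilde{\pi}_{0,\varepsilon}$ by $b^2\le\kappa$ requires an upper bound $\sup_{t\le s\le T}\kappa(s)\le\widehat{K}$, which the paper's proof indeed invokes even though it is not listed among the hypotheses in \eqref{kappa1}.
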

\begin{proof}
From \eqref{y-tilde}, \eqref{y-tilde-processes2}, and Proposition \ref{robustness-X}, we have
\begin{align*}
\E\Big[\int_t^T|Y^{FS}(s)-Y^{FS}_\varepsilon(s)|^2ds\Big]& \leq C\Big\{\E\Big[\int_t^T|\widetilde{Y}(s)-\widetilde{Y}_\varepsilon(s)|^2ds\Big]\\
&\qquad +\sup_{t \leq s \leq T}\kappa(s) \E\Big[\int_t^T|\widetilde{\pi}(s)-\widetilde{\pi}_{0,\varepsilon}(s)|^2ds\Big]\Big\}\\
&\leq \widetilde{C}\E[|\widetilde{\xi}-\widetilde{\xi}^0_\varepsilon|^2].
\end{align*}
Combining \eqref{y-tilde}, \eqref{y-tilde-processes2}, Theorem \ref{robustness-pi1}, Lemma \ref{solution-boundedness}, and Proposition \ref{robustness-X}, we arrive at
\begin{align*}
&\E\Big[\int_t^T\int_{\R_0}|Z^{FS}(s,z)-Z^{FS}_\varepsilon(s,z)|^2\ell(dz)ds\Big]\\
&\qquad  \leq C \E\Big[\int_t^T\int_{\R_0}|\widetilde{Z}(s,z)-\widetilde{Z}_{\varepsilon}(s,z)|^2\ell(dz)ds\Big]\\
&\qquad \qquad + G^2(\infty)\sup_{t \leq s\leq T}\widetilde{\gamma}^2(s)\E\Big[\int_t^T|\widetilde{\pi}(s)-\widetilde{\pi}_{0,\varepsilon}(s)|^2ds\Big]\\
&\qquad \qquad +G^2(\varepsilon)\sup_{t\leq s \leq T}\widetilde{\gamma}^2(s) \E\Big[\int_t^T|\widetilde{\pi}(s)|^2ds\Big]\\
&\qquad \leq C\E[|\widetilde{\xi}-\widetilde{\xi}^0_\varepsilon|^2] + C'G^2(\varepsilon)\E[\widetilde{\xi}^2].
\end{align*}
\end{proof}
Define the cost process in the risk-minimizing strategy for $\widetilde{\xi}_\varepsilon^0$ by 
$$C_{0,\varepsilon}(t)=\phi_{0,\varepsilon}^{FS}(t)+\widetilde{V}_{0,\varepsilon}(0).$$
Then an obvious implication of the last theorem is the robustness of the cost process and it is easy to show that under the same conditions of the 
last theorem we have for all $t \in [0,T]$,
 \begin{align*}
\E[|C(t)-C_{0,\varepsilon}(t)|^2] &\leq \widetilde{K}\E[|\widetilde{\xi} -\widetilde{\xi}^0_{\varepsilon}|^2]+ K'G^2(\varepsilon),
\end{align*}
where $\widetilde{K}$ and $K'$ are two positive constants.

\subsection{A note on the robustness of the mean-variance hedging strategies}
A mean-variance hedging strategy is a self-financing strategy $\widetilde{V}$ for which we do not impose the replication requirement $\widetilde{V}(T)=\widetilde{\xi}$.
However we insist on the self-financing constraint. In this case we define the shortfall or loss from hedging $\widetilde{\xi}$ by
\begin{equation*}
 \widetilde{\xi}-\widetilde{V}(T)=\widetilde{\xi}-\widetilde{V}(0)-\int_0^T\widetilde{\Gamma}(s)d\widetilde{S}(s), \quad \widetilde{V}(0) \in \R, \quad \widetilde{\Gamma} \in \Theta\,.
 \end{equation*}
In order to obtain the MVH strategy one has to minimize the latter quantity in the $L^2$-norm by choosing $(\widetilde{V}(0),\widetilde{\Gamma}) \in (\R, \Theta)$.
Schweizer \cite{SC1} gives a formula for the number of risky assets in a MVH strategy where he assumes that the so-called extended mean-variance tradeoff process is deterministic.

In this paper, given the dynamics of the stock price process $S$, the process $A$ defined in \eqref{finite-variation1} is continuous. Thus the mean-variance tradeoff process and the extended mean-variance tradeoff process defined in Schweizer \cite{SC1} coincide.
Therefore applying Theorem 3 and Corollary 10 in Schweizer \cite{SC1} 
and assuming that the mean-variance tradeoff process $K$ is deterministic, the discounted number of risky assets in a mean-variance hedging strategy is given by 
\begin{equation}\label{mvh-strategy}
\widetilde{\Gamma}(t)= \widetilde{\chi}^{FS}(t)+\alpha(t)\Big(\widetilde{V}(t)-\widetilde{V}(0)-\int_0^t\widetilde{\Gamma}(s)d\widetilde{S}(s)\Big),
\end{equation}
where $\alpha$ and $\widetilde{\chi}^{FS}$ are as defined in \eqref{alpha-process} and \eqref{portfolio-general}.
Moreover the minimal martingale measure and the mean-variance martingale measure coincide (see Schweizer \cite{SC}) and in this case $\widetilde{V}(t)=\E_{\widetilde{Q}}[\widetilde{\xi}|\mathcal{F}_t]$, $0\leq t\leq T$, where $\widetilde{Q}$ is the minimal martingale measure. Multiplying \eqref{mvh-strategy} by $\widetilde{S}$
we obtain the following equation for the amount of wealth in a mean-variance hedging strategy
$$\widetilde{\Upsilon}(t)= \widetilde{\pi}(t)+h(t)\Big(\widetilde{V}(t)-\widetilde{V}(0)-\int_0^t\frac{\widetilde{\Upsilon}(s)}{\widetilde{S}(s)}d\widetilde{S}(s)\Big),$$
where $h$ is given by \eqref{h-process}. Since $K$ is deterministic then $a$, $b$, $r$, $\gamma$, and thus $h$ should be deterministic. 
We consider the approximating stock process $\widetilde{S}_{1,\varepsilon}$. The amount of wealth in a mean-variance hedging strategy associated to $\widetilde{S}_{1,\varepsilon}$ is given by
$$\widetilde{\Upsilon}_{1,\varepsilon}(t)= \widetilde{\pi}_{1,\varepsilon}(t)+h(t)\Big(\widetilde{V}_{1,\varepsilon}(t)-\widetilde{V}_{1,\varepsilon}(0)-\int_0^t\frac{\widetilde{\Upsilon}_{1,\varepsilon}(s)}{\widetilde{S}_{1,\varepsilon}}d\widetilde{S}_{1,\varepsilon}(s)\Big).$$
Before we show the robustness of the mean-variance hedging strategies. We present the following lemma in which we show the boundedness in $L^2$
of $\widetilde{\Upsilon}$.
\begin{lemma} \label{boundedness-upsilon}
Assume that the mean-variance tradeoff process $K$ \eqref{mvt} is deterministic and that \eqref{lipschitz-condition} holds true. Then 
for all $t\in [0,T]$,
$$
\E[\widetilde{\Upsilon}^2(t)]\leq C(T)\E[\xi^2],
$$
where $C(T)$ is a positive constant depending on $T$. 
\end{lemma}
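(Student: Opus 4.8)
The plan is to eliminate the self-referential stochastic integral in the definition of $\widetilde{\Upsilon}$ by passing to the auxiliary process $L(t):=\widetilde{V}(t)-\widetilde{V}(0)-\int_0^t\frac{\widetilde{\Upsilon}(s)}{\widetilde{S}(s)}d\widetilde{S}(s)$, to bound $\E[L^2(t)]$ by a clean It\^o--energy computation, and then to recover $\widetilde{\Upsilon}=\widetilde{\pi}+hL$. Since the mean-variance trade-off process $K$ in \eqref{mvt} is assumed deterministic, the coefficients $a,b,r,\gamma$ and hence $h$ of \eqref{h-process} are deterministic; this will let me pull them out of expectations and use It\^o isometry freely.

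First I would rewrite $L$ using the FS decomposition \eqref{portfolio-general} and the strategy formula \eqref{mvh-strategy}. Because $\widetilde{\Upsilon}/\widetilde{S}=\widetilde{\Gamma}=\chi^{FS}+\alpha L$ with $\alpha\widetilde{S}=h$, the two stochastic integrals against $d\widetilde{S}$ combine and, substituting \eqref{price-process},
\begin{equation*}
dL(t)=-h(t)L(t)\Big[(a(t)-r(t))dt+b(t)dW(t)+\int_{\R_0}\gamma(t,z)\widetilde{N}(dt,dz)\Big]+d\phi^{FS}(t),
\end{equation*}
with $L(0)=0$ and $\phi^{FS}$ as in \eqref{phi}. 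Applying It\^o to $L^2$ and taking expectations, the martingale parts drop out and I obtain
\begin{equation*}
\E[L^2(t)]=\E\int_0^t\Big[-2h(a-r)L^2+(-hLb+Y^{FS})^2+\int_{\R_0}(-hL\gamma+Z^{FS})^2\ell(dz)\Big]ds.
\end{equation*}
Here the crucial simplification is that the two cross terms sum to $-2hL\big(bY^{FS}+\int_{\R_0}\gamma Z^{FS}\ell(dz)\big)$, which vanishes by the orthogonality relation \eqref{orthogonality}, while the $L^2$-coefficient equals $-2h(a-r)+h^2\kappa=-\frac{(a-r)^2}{\kappa}\le0$ by the definition \eqref{h-process} of $h$. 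Hence $\E[L^2(t)]\le\E\int_0^T\big[(Y^{FS})^2+\int_{\R_0}(Z^{FS})^2\ell(dz)\big]ds=\E[(\phi^{FS}(T))^2]$, uniformly in $t$.

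It then remains to bound the two pieces of $\widetilde{\Upsilon}=\widetilde{\pi}+hL$. For the $L$-part I would control $\E[(\phi^{FS}(T))^2]$ by $C\,\E[\widetilde{\xi}^2]$: via \eqref{y-tilde} one has $Y^{FS}=\widetilde{Y}-\widetilde{\pi}b$ and $Z^{FS}=\widetilde{Z}-\widetilde{\pi}\gamma$, so the isometry identity for $\phi^{FS}$ reduces to the $H^2$-norms of $\widetilde{Y},\widetilde{Z},\widetilde{\pi}$, which are dominated by $\E[\widetilde{\xi}^2]$ through \eqref{pi}, \eqref{lipschitz-condition}, and Lemma \ref{solution-boundedness}. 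For the $\widetilde{\pi}$-part I would insert \eqref{pi}, use Cauchy--Schwarz together with $b^2\le\kappa$ and $\int_{\R_0}\gamma^2\ell(dz)\le\kappa$ to get the pointwise estimate $\widetilde{\pi}^2(t)\le\frac{2}{\kappa(t)}\big(\widetilde{Y}^2(t)+\int_{\R_0}\widetilde{Z}^2(t,z)\ell(dz)\big)$, and absorb the resulting constants. Finally, passing from $\widetilde{\xi}$ to $\xi$ only costs the deterministic discount factor $S^{(0)}(T)$.

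The main obstacle is precisely the fixed-point structure: $\widetilde{\Upsilon}$ appears inside the gain integral that defines it, so a crude estimate would lead to a circular bound. The reduction to the linear equation for $L$ and the exact cancellation $-2h(a-r)+h^2\kappa=-(a-r)^2/\kappa\le0$ (powered by the orthogonality \eqref{orthogonality}) is what breaks the circularity without invoking Gr\"onwall; a direct Gr\"onwall argument on $g(t)=\E[\widetilde{\Upsilon}^2(t)]$ is possible as an alternative but forces one to track the mixed kernel $h^2(t)\kappa(s)$. The more delicate point I expect to need care with is the uniform-in-$t$ control of $\E[\widetilde{\pi}^2(t)]$ and of the deterministic factor $h^2(t)$, since the BSDEJ integrands are a priori only square-integrable in time; this is where the Lipschitz bound \eqref{lipschitz-condition} and the boundedness furnished by Lemma \ref{solution-boundedness} must be used carefully.
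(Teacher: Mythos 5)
Your proof is correct, but it takes a genuinely different route from the paper's. The paper squares the fixed-point equation for $\widetilde{\Upsilon}$ directly: it applies It\^{o} isometry and H\"{o}lder's inequality to the defining relation, bounds the terms $\E[\widetilde{\pi}^2(t)]$ and $\E[\widetilde{V}^2(t)]$ via Lemma \ref{solution-boundedness}, Lemma \ref{lipschitz-lemma} and equation \eqref{pi}, and then untangles the self-referential gain integral with Gronwall's inequality applied to $g(t)=\E[\widetilde{\Upsilon}^2(t)]$ --- precisely the ``mixed kernel'' alternative you mention at the end of your proposal. Your substitution $L=\widetilde{V}-\widetilde{V}(0)-\int_0^\cdot\widetilde{\Gamma}\,d\widetilde{S}$ instead linearizes the problem: the SDE $dL=-\alpha L\,d\widetilde{S}+d\phi^{FS}$ is correct (since $\widetilde{\Gamma}=\chi^{FS}+\alpha L$ and $\alpha\widetilde{S}=h$), the cross terms in the energy identity do vanish by the orthogonality relation \eqref{orthogonality}, and the cancellation $-2h(a-r)+h^2\kappa=-(a-r)^2/\kappa\le 0$ is exactly right; this yields the Gronwall-free and structurally sharper bound $\E[L^2(t)]\le\E[(\phi^{FS}(T))^2]\le C\E[\widetilde{\xi}^2]$, i.e.\ the squared hedging shortfall is dominated by the variance of the FS cost process. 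What your route buys is a cleaner constant (no exponential Gronwall factor) and an interpretation that makes visible why the minimal-martingale structure controls the mean-variance hedging error; what the paper's route buys is robustness, since it needs no exact algebraic cancellation and would survive perturbations of the coefficients. Both proofs require the same final assembly $\widetilde{\Upsilon}=\widetilde{\pi}+hL$ and both share the same caveats there, which you flag honestly and the paper passes over silently: $\E[\widetilde{\pi}^2(t)]$ at a fixed $t$ and the factor $h^2(t)$ are only controlled in an integrated or a.e.-in-$t$ sense, because $\widetilde{Y}$ and $\widetilde{Z}$ live in $H^2_{T}$ and $\widehat{H}^2_{T}$, and $h^2\le C^2/\kappa$ is uniform only under a lower bound on $\kappa$ such as \eqref{kappa}. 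Since this imprecision is inherited from the paper's own statement and proof, it does not count against your argument.
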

\begin{proof}
Applying It\^{o} isometry and H\"{o}lder inequality, we get
\begin{align*}
\E[\widetilde{\Upsilon}^2(t)]&\leq \E[\widetilde{\pi}^2(t)]+C'h^2(t)\Big(\E[\widetilde{V}^2(t)]+\E[\widetilde{V}^2(0)]\\
&\quad+ \int_0^t\E[\widetilde{\Upsilon}^2(s)]\{(a(s)-r(s))^2+b^2(s)+\int_{\R_0}\gamma^2(s,z)\ell(dz)\}ds \Big),
\end{align*}
where $C'$ is a positive constant. Using Lemma \ref{solution-boundedness}, Lemma \ref{lipschitz-lemma}, and equation \eqref{pi}, the result follows applying Gronwall's inequality.
\end{proof}
In the following theorem we prove the robustness of the amount of wealth in a mean-variance hedging strategy.
\begin{theorem} Assume the mean-variance tradeoff process is deterministic and that \eqref{lipschitz-condition} and \eqref{kappa} hold. Then for all $t\in [0,T]$,
$$\E\Big[|\widetilde{\Upsilon}(t)-\widetilde{\Upsilon}_{1,\varepsilon}(t)|^2]\leq C\E[|\widetilde{\xi}-\widetilde{\xi}^1_\varepsilon|^2]+\widetilde{C}G^2(\varepsilon) .$$
\end{theorem}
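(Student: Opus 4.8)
The plan is to compare the two defining relations for $\widetilde{\Upsilon}$ and $\widetilde{\Upsilon}_{1,\varepsilon}$ by setting $D(t)=\widetilde{\Upsilon}(t)-\widetilde{\Upsilon}_{1,\varepsilon}(t)$ and deriving a Gronwall-type inequality for $\E[D^2(t)]$, in the same spirit as the proofs of Theorems \ref{portfolio-robustness}, \ref{robustness-pi} and \ref{robustness-phi}. First I would substitute the explicit dynamics of $\widetilde{S}$ and $\widetilde{S}_{1,\varepsilon}$ into the gains integrals $\int_0^t\frac{\widetilde{\Upsilon}(s)}{\widetilde{S}(s)}d\widetilde{S}(s)$ and $\int_0^t\frac{\widetilde{\Upsilon}_{1,\varepsilon}(s)}{\widetilde{S}_{1,\varepsilon}(s)}d\widetilde{S}_{1,\varepsilon}(s)$, since $\frac{d\widetilde{S}}{\widetilde{S}}$ and $\frac{d\widetilde{S}_{1,\varepsilon}}{\widetilde{S}_{1,\varepsilon}}$ are known. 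Subtracting, the difference of gains splits into a common part $\int_0^t D(s)\,dR_\varepsilon(s)$, where $dR_\varepsilon(s)=(a(s)-r(s))ds+b(s)dW(s)+\int_{|z|>\varepsilon}\gamma(s,z)\widetilde{N}(ds,dz)$, plus an error term
$$E(t)=\int_0^t\int_{|z|\le\varepsilon}\widetilde{\Upsilon}(s)\gamma(s,z)\widetilde{N}(ds,dz)-G(\varepsilon)\int_0^t\widetilde{\Upsilon}_{1,\varepsilon}(s)\widetilde{\gamma}(s)dB(s)$$
collecting the truncated small jumps of $\widetilde{S}$ and the extra Brownian part of $\widetilde{S}_{1,\varepsilon}$.

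The second step is to bound $E(t)$. Using $\gamma=g\widetilde{\gamma}$ and $\int_{|z|\le\varepsilon}g^2(z)\ell(dz)=G^2(\varepsilon)$ from \eqref{variance-term}, It\^o isometry gives $\E[E^2(t)]\le CG^2(\varepsilon)\big(\E[\int_0^t\widetilde{\Upsilon}^2\widetilde{\gamma}^2ds]+\E[\int_0^t\widetilde{\Upsilon}_{1,\varepsilon}^2\widetilde{\gamma}^2ds]\big)$; since the deterministic mean-variance tradeoff forces $\widetilde{\gamma}$ deterministic with $\int_0^T\widetilde{\gamma}^2(s)ds<\infty$, and both $\widetilde{\Upsilon}$ and $\widetilde{\Upsilon}_{1,\varepsilon}$ are bounded in $L^2$ uniformly in $s$ and in $\varepsilon$ by Lemma \ref{boundedness-upsilon} and its analogue for the approximation, this yields $\E[E^2(t)]\le C'G^2(\varepsilon)$. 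Determinism of $K$ also makes $a,b,r,\gamma,\widetilde{\gamma}$, hence $h$ in \eqref{h-process}, deterministic, and \eqref{lipschitz-condition} together with \eqref{kappa} gives $|h|\le C/\sqrt{K}$, while $(a-r)^2,b^2,\widetilde{\gamma}^2$ are integrable on $[0,T]$, which is exactly the integrability the Gronwall kernel needs.

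I would then work with the loss process instead of $\widetilde{\Upsilon}$ directly. Writing $\widetilde{\Upsilon}=\widetilde{\pi}+hL$ and $\widetilde{\Upsilon}_{1,\varepsilon}=\widetilde{\pi}_{1,\varepsilon}+hL_{1,\varepsilon}$, the difference $\Delta L=L-L_{1,\varepsilon}$ satisfies
$$\Delta L(t)=\big(\widetilde{V}(t)-\widetilde{V}_{1,\varepsilon}(t)\big)-\big(\widetilde{V}(0)-\widetilde{V}_{1,\varepsilon}(0)\big)-\int_0^t(\widetilde{\pi}(s)-\widetilde{\pi}_{1,\varepsilon}(s))\,dR_\varepsilon(s)-E(t)-\int_0^t h(s)\Delta L(s)\,dR_\varepsilon(s).$$
The advantage is that $\widetilde{\pi}-\widetilde{\pi}_{1,\varepsilon}$ now enters only through the stochastic integral $\int_0^t(\widetilde{\pi}-\widetilde{\pi}_{1,\varepsilon})\,dR_\varepsilon$, whose second moment is dominated, via It\^o isometry and the coefficient integrability, by $C\,\E[\int_0^T|\widetilde{\pi}-\widetilde{\pi}_{1,\varepsilon}|^2\,m(s)ds]$ for an integrable kernel $m$, and hence by $C\,\E[|\widetilde{\xi}-\widetilde{\xi}^1_\varepsilon|^2]$ through Theorem \ref{robustness-pi}; the value difference is controlled by Theorem \ref{portfolio-robustness} and $E(t)$ by the previous step. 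Squaring, taking expectations, and isolating the feedback term $\int_0^t h\Delta L\,dR_\varepsilon$, which produces $\int_0^t\E[\Delta L^2(s)]\,m(s)ds$, Gronwall's inequality gives $\E[\Delta L^2(t)]\le C\,\E[|\widetilde{\xi}-\widetilde{\xi}^1_\varepsilon|^2]+\widetilde{C}G^2(\varepsilon)$.

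The hard part, which I would treat most carefully, is the final reassembly $\E[D^2(t)]\le 2\,\E[|\widetilde{\pi}(t)-\widetilde{\pi}_{1,\varepsilon}(t)|^2]+2h^2(t)\,\E[\Delta L^2(t)]$: the second summand is already controlled, but the first is a pointwise (fixed-$t$) difference of the amounts, whereas Theorem \ref{robustness-pi} only delivers the time-integrated estimate $\E[\int_t^T|\widetilde{\pi}-\widetilde{\pi}_{1,\varepsilon}|^2ds]$. The loss-process reformulation is precisely what routes $\widetilde{\pi}-\widetilde{\pi}_{1,\varepsilon}$ into an integral, so the Gronwall estimate for $\Delta L$ closes; but to conclude the stated pointwise bound for $D$ one needs, in addition, a pointwise (or supremum) robustness estimate for $\widetilde{\pi}-\widetilde{\pi}_{1,\varepsilon}$ under the deterministic-tradeoff hypothesis, strengthening Theorem \ref{robustness-pi}. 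I would therefore either upgrade that estimate under the present stronger assumptions or set up the Gronwall argument directly for $\E[D^2(t)]$ and make the bookkeeping between the pointwise and the integrated $\widetilde{\pi}$-contributions precise; this interplay of norms is the genuine obstacle, with everything else being routine It\^o-isometry and Gronwall bookkeeping.
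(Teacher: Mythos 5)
Your overall strategy is the paper's: compare the two defining equations for $\widetilde{\Upsilon}$ and $\widetilde{\Upsilon}_{1,\varepsilon}$, isolate the error terms coming from the truncated small jumps and the extra Brownian part (whose second moments are of order $G^2(\varepsilon)$ by It\^o isometry, \eqref{variance-term} and the $L^2$-bound of Lemma \ref{boundedness-upsilon}), control the value and amount-of-wealth differences by Theorems \ref{portfolio-robustness} and \ref{robustness-pi}, and close with Gronwall. The differences are organisational. The paper applies Gronwall directly to $\E[|\widetilde{\Upsilon}(t)-\widetilde{\Upsilon}_{1,\varepsilon}(t)|^2]$ and, inside the $dB$-integral, writes $\widetilde{\Upsilon}_{1,\varepsilon}=\widetilde{\Upsilon}-(\widetilde{\Upsilon}-\widetilde{\Upsilon}_{1,\varepsilon})$, so that the difference part is absorbed into the Gronwall kernel (note $G^2(\varepsilon)\widetilde{\gamma}^2(s)\le\int_{\R_0}\gamma^2(s,z)\ell(dz)$) and only Lemma \ref{boundedness-upsilon} for $\widetilde{\Upsilon}$ is needed; you instead keep $\widetilde{\Upsilon}_{1,\varepsilon}$ in the error term, which is why you need (and correctly flag) an $\varepsilon$-uniform analogue of Lemma \ref{boundedness-upsilon} for the approximation --- a minor, easily supplied addition. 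Your detour through the loss processes and $\Delta L$ is not in the paper, but it is a legitimate variant: Gronwall for $\E[\Delta L^2(t)]$ is exactly parallel to Gronwall for $\E[D^2(t)]$.

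Concerning the obstacle you single out at the end: it is real, but you should know the paper does not resolve it either. After its It\^o-isometry/H\"older step, the paper's inequality is
$\E[D^2(t)]\le \E[|\widetilde{\pi}(t)-\widetilde{\pi}_{1,\varepsilon}(t)|^2]+\widetilde{C}h^2(t)\bigl(\cdots+\int_0^t\E[D^2(s)]\,m(s)\,ds+\cdots\bigr)$
with $m(s)=|a(s)-r(s)|^2+b^2(s)+\int_{\R_0}\gamma^2(s,z)\ell(dz)$, i.e.\ it carries precisely the pointwise term $\E[|\widetilde{\pi}(t)-\widetilde{\pi}_{1,\varepsilon}(t)|^2]$ that worries you, and it then cites Theorem \ref{robustness-pi} --- which only bounds $\E[\int_t^T|\widetilde{\pi}(s)-\widetilde{\pi}_{1,\varepsilon}(s)|^2ds]$ --- before invoking Gronwall; this is exactly the conflation of pointwise and time-integrated norms that you refuse to commit. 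And indeed a genuinely pointwise estimate is not obtainable from the machinery of Section \ref{robustness}: by \eqref{pi} and \eqref{pi-approximation}, $\widetilde{\pi}$ and $\widetilde{\pi}_{1,\varepsilon}$ are built from the BSDEJ integrands $\widetilde{Y},\widetilde{Z},\widetilde{\zeta}_\varepsilon$, which are determined only $dt\otimes d\mathbb{P}$-a.e., so the conclusion is naturally read for a.e.\ $t$ or in time-integrated form (in which case both your argument and the paper's close without further input). So your proposal reproduces the paper's proof up to and including its weakest step; the gap you describe is an imprecision of the paper itself, not an idea you failed to find.
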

\begin{proof}
We have 
\begin{align*}
&|\widetilde{\Upsilon}(t)-\widetilde{\Upsilon}_{1,\varepsilon}(t)|\\
&\leq|\widetilde{\pi}(t)-\widetilde{\pi}_{1,\varepsilon}(t)|
 +|h(t)|\Big(|V(t)-V_{1,\varepsilon}(t)|+|V(0)-V_{1,\varepsilon}(0)|\\
&\qquad +\int_0^t |\widetilde{\Upsilon}(s) -\widetilde{\Upsilon}_{1,\varepsilon}(s)||a(s)-r(s)|ds+|\int_0^t (\widetilde{\Upsilon}(s) -\widetilde{\Upsilon}_{1,\varepsilon}(s))b(s)dW(s)|\\
&\qquad +|\int_0^t\int_{|z|> \varepsilon}(\widetilde{\Upsilon}(s)-\widetilde{\Upsilon}_{1,\varepsilon}(s))\gamma(s,z)\widetilde{N}(ds,dz)|\\
&\qquad+G(\varepsilon)|\int_0^t (\widetilde{\Upsilon}_{1,\varepsilon}(s)-\widetilde{\Upsilon}(s)\widetilde{\gamma}(s)dB(s)|\\
&\qquad+|\int_0^t\int_{|z|\leq \varepsilon}\widetilde{\Upsilon}(s)\gamma(s,z)\widetilde{N}(ds,dz)|
 +G(\varepsilon)|\int_0^t \widetilde{\Upsilon}(s)\widetilde{\gamma}(s)dB(s)|\Big).
\end{align*}
Using It\^{o} isometry and H\"{o}lder inequality, we get
\begin{align*}
&\E[|\widetilde{\Upsilon}(t)-\widetilde{\Upsilon}_{1,\varepsilon}(t)|^2]\\
&\quad \leq \E[|\widetilde{\pi}(t)-\widetilde{\pi}_{1,\varepsilon}(t)|^2]+\widetilde{C}h^2(t)\Big(\E[|V(t)-V_{1,\varepsilon}(t)|^2]+\E[|V(0)-V_{1,\varepsilon}(0)|^2]\\
&\quad +\int_0^t \E[|\widetilde{\Upsilon}(s) -\widetilde{\Upsilon}_{1,\varepsilon}(s)|^2]\Big(|a(s)-r(s)|^2+|b(s)|^2+\int_{\R_0}|\gamma(s,z)|^2\ell(dz)\Big)ds\\
&\qquad +G^2(\varepsilon) \int_0^t \E[\widetilde{\Upsilon}^2(s)]\widetilde{\gamma}^2(s) ds \Big),
\end{align*}
where $\widetilde{C}$ is a positive constant.
Using Theorem \ref{portfolio-robustness}, Theorem \ref{robustness-pi}, and Lemma \ref{boundedness-upsilon}
the result follows applying Gronwall's inequality.
\end{proof}

We proved in this section that when the mean-variance trade-off process $K$ defined in \eqref{mvt} is deterministic, then the value of the portfolio and the amount of wealth in a mean-variance hedging strategy are robust towards the choice of the model. The same robustness result holds true when we consider the stock price process $\widetilde{S}_{0,\varepsilon}$.
We do not present this result since it follows the same lines as the approximation $\widetilde{S}_{1,\varepsilon}$.
\section{Conclusion}
In this paper we consider different models for the price process. Then using BSDEJs we proved that the locally risk-minimizing and the mean-variance hedging strategies are 
robust towards the choice of the model. Our results are given in terms of estimates containing $\E[|\widetilde{\xi}-\widetilde{\xi}^\rho_\varepsilon|^2]$, which is a quantity well studied
by Benth et al.\ \cite{BDK} and Kohatsu-Higa and Tankov \cite{KT}

We have specifically studied two types of approximations of the price $S$ and suggested a third approximation. 
 It is also possible to consider other approximations to the price $S$. For example we can truncate the small jumps without adding a Brownian motion. In that case, based on the 
 robustness of the BSDEJs, we can also prove the robustness of quadratic hedging strategies. 
 Another approximation is to add to the L\'{e}vy process a scaled Brownian motion. This type of approximation was discussed  and justified in a paper by Benth et al. \cite{BDK}. 
 
As far as further investigations are concerned, we consider in another paper a time-discretization of these different price models and study the convergence of the quadratic hedging strategies related to each of these time-discretized price models to the quadratic hedging strategies related to the original continuous time model. Moreover, we are concerned with the characterization of the approximating models which give the best convergence rates when the robustness of quadratic hedging strategies is taken into account.

\section{Appendix: existence and uniqueness of BSDEJs}\label{appendix}
\noindent {\bf{Proof of Theorem \ref{existence-1}.}}
First, we prove the result in $\widetilde{H}^2_{T,\beta} \times H^2_{T,\beta}\times \widehat{H}^2_{T,\beta}$
with the norm 
$$\|(X_\varepsilon,Y_\varepsilon,Z_\varepsilon)\|^2_{\widetilde{H}^2_{T,\beta} \times H^2_{T,\beta}\times \widehat{H}^2_{T,\beta}}=\|X_\varepsilon\|^2_{\widetilde{H}^2_{T,\beta}}+\|Y_\varepsilon\|^2_{H^2_{T,\beta}}+\|Z_\varepsilon\|^2_{\widehat{H}^2_{T,\beta}}.$$
The proof is based on a fixed point theorem.
Let $(U_\varepsilon, V_\varepsilon, K_{\varepsilon}) \in \widetilde{H}_{T,\beta}^2 \times H_{T,\beta} \times \widehat{H}_{T,\beta}^2$ and define $X_\varepsilon$ as follows
\begin{equation}\label{conditional-expectation}
X_\varepsilon(t)= \E\Big[\xi^0_\varepsilon + \int_t^Tf^0(s,U_\varepsilon(s),V_\varepsilon(s), K_{\varepsilon}(s,\cdot))ds |\mathcal{F}_t\Big], \qquad 0\leq t\leq T.
\end{equation}
Applying Theorem \ref{representation-theo} with $\mathbb{H}=\mathbb{F}$, to the square integrable $\mathcal{F}_T$-measurable random variable $$\xi^0_\varepsilon + \int_0^Tf^0(s,U_\varepsilon(s),V_\varepsilon(s), K_{\varepsilon}(s,\cdot))ds,$$ we know that there exist 
$Y_\varepsilon \in H^2_{T,\beta}$ and $Z_\varepsilon \in \widehat{H}^2_{T,\beta}$ such that 
\begin{align}\label{martingale-representation-bsdej}
&\xi^0_\varepsilon + \int_0^Tf^0(s,U_\varepsilon(s),V_\varepsilon(s), K_{\varepsilon}(s,\cdot))ds\nonumber\\
&\qquad = \E\Big[\xi^0_\varepsilon + \int_0^Tf^0(s,U_\varepsilon(s),V_\varepsilon(s), K_{\varepsilon}(s,\cdot))ds\Big] + \int_0^TY_\varepsilon(s)dW(s)\nonumber\\
&\qquad \qquad  +  \int_0^T\int_{\R_0}Z_\varepsilon(s,z)\widetilde{N}(ds,dz).
\end{align}
Taking the conditional expectation with respect to $\mathcal{F} _t$ and using the martingale property, we get
\begin{align*}
&\E\Big[\xi^0_\varepsilon + \int_0^Tf^0(s,U_\varepsilon(s),V_\varepsilon(s), K_{\varepsilon}(s,\cdot))ds|\mathcal{F}_t\Big]\\
&\qquad =\E\Big[\xi^0_\varepsilon + \int_0^Tf^0(s,U_\varepsilon(s),V_\varepsilon(s), K_{\varepsilon}(s,\cdot))ds\Big]   + \int_0^tY_\varepsilon(s)dW(s)\\
&\qquad \qquad + \int_0^t\int_{\R_0}Z_{\varepsilon}(s,z)\widetilde{N}(ds,dz).
\end{align*}
Since the integral over the interval $(0,t)$ is $\mathcal{F}_t$-measurable, we find
\begin{align*}
&\E\Big[\xi^0_\varepsilon + \int_0^Tf^0(s,U_\varepsilon(s),V_\varepsilon(s), K_{\varepsilon}(s,\cdot))ds|\mathcal{F}_t\Big]\\
&\qquad = \int_0^tf^0(s,U_\varepsilon(s),V_\varepsilon(s), K_{\varepsilon}(s,\cdot))ds \\
&\qquad \qquad + \E\Big[\xi^0_\varepsilon + \int_t^Tf^0(s,U_\varepsilon(s),V_\varepsilon(s), K_{\varepsilon}(s,\cdot))ds|\mathcal{F}_t\Big].\\
\end{align*}
Thus by the definition of $X_\varepsilon$, equation \eqref{conditional-expectation}, we have
\begin{align*}
X_\varepsilon(t)&=X_\varepsilon(0)-\int_0^tf^0(s,U_\varepsilon(s),V_\varepsilon(s), K_{\varepsilon}(s,\cdot))ds + \int_0^tY_\varepsilon(s)dW(s)\\
&\qquad +\int_0^t\int_{\R_0}Z_{\varepsilon}(s,z)\widetilde{N}(ds,dz),
\end{align*}
from which by combining with \eqref{martingale-representation-bsdej}, we deduce that 
\begin{align*}
X_\varepsilon(t)&=\xi^0_\varepsilon + \int_t^Tf^0(s,U_\varepsilon(s),V_\varepsilon(s), K_{\varepsilon}(s,\cdot))ds -\int_t^T Y_\varepsilon(s)dW(s) \\
&\qquad - \int_t^T\int_{\R_0}Z_{\varepsilon}(s,z)\widetilde{N}(ds,dz).
\end{align*}
This relation defines a mapping $\phi: \widetilde{H}^2_{T,\beta} \times H^2_{T,\beta}\times \widehat{H}^2_{T,\beta}  \longrightarrow 
\widetilde{H}^2_{T,\beta} \times H^2_{T,\beta}\times \widehat{H}^2_{T,\beta}$ with
$(X_\varepsilon,Y_\varepsilon,Z_\varepsilon)=\phi(U_\varepsilon,V_\varepsilon,K_{\varepsilon})$.
We may conclude that $(X_\varepsilon,Y_\varepsilon,Z_\varepsilon) \in \widetilde{H}^2_{T,\beta} \times H^2_{T,\beta}\times \widehat{H}^2_{T,\beta}$ solves the BSDEJ \eqref{bsdes-approximation} if and only if it is a fixed point of $\phi$. 

Hereto we prove that $\phi$ is a strict contraction on $\widetilde{H}^2_{T,\beta} \times H^2_{T,\beta}\times \widehat{H}^2_{T,\beta}$ 
for a suitable $\beta >0$.
Let $(U_\varepsilon,V_\varepsilon,K_{\varepsilon})$  and $(\widehat{U}_\varepsilon,\widehat{V}_\varepsilon,\widehat{K}_\varepsilon)$ be two elements of $\widetilde{H}^2_{T,\beta} \times H^2_{T,\beta}\times \widehat{H}^2_{T,\beta}$ and set $\phi(U_\varepsilon,V_\varepsilon,K_{\varepsilon})=(X_\varepsilon,Y_\varepsilon,Z_\varepsilon)$ and 
$\phi(\widehat{U}_\varepsilon,\widehat{V}_\varepsilon,\widehat{K}_\varepsilon)=(\widehat{X}_\varepsilon,\widehat{Y}_\varepsilon,\widehat{Z}_\varepsilon)$. Denote 
$(\bar{U}_\varepsilon,\bar{V}_\varepsilon,\bar{K}_\varepsilon)=(U_\varepsilon-\widehat{U}_\varepsilon, V_\varepsilon-\widehat{V}_\varepsilon,
K_{\varepsilon}-\widehat{K}_\varepsilon)$ and $(\bar{X}_\varepsilon,\bar{Y}_\varepsilon,\bar{Z}_\varepsilon)=(X_\varepsilon-\widehat{X}_\varepsilon, Y_\varepsilon-\widehat{Y}_\varepsilon,Z_\varepsilon-\widehat{Z}_\varepsilon)$.
Applying the It\^{o} formula to $\e^{\beta s} \bar{X}_\varepsilon(s)$, it follows that
\begin{align*}
\e^{\beta t}\bar{X}^2_\varepsilon(t)&=- \int_t^T\beta\e^{\beta s}\bar{X}^2_\varepsilon(s)ds
 + 2\int_t^T\bar{X}_\varepsilon(s)\e^{\beta s}\{f^0(s,U_\varepsilon(s),V_\varepsilon(s),K_{\varepsilon}(s,\cdot))\\
&\qquad  -f^0(s,\widehat{U}_\varepsilon(s),\widehat{V}_\varepsilon(s), \widehat{K}_\varepsilon(s,\cdot))\}ds - \int_t^T2e^{\beta s}\bar{X}_\varepsilon(s)\bar{Y}_\varepsilon(s)dW(s)\\
&\qquad -\int_t^Te^{\beta s}\bar{Y}^2(s)ds- \int_t^T\int_{\R_0}\e^{\beta s} \bar{Z}_{\varepsilon}^2(s,z)\ell(dz)ds\\
&\qquad  - \int_t^T\int_{\R_0}\Big\{ \e^{\beta s} \bar{Z}^2_{\varepsilon}(s,z)-2\e^{\beta s} \bar{Z}_{\varepsilon}(s,z)\bar{X}_\varepsilon(s)\Big\} \widetilde{N}(ds,dz).
\end{align*}
Taking the expectation, we get
\begin{align*}
&\E\Big[\e^{\beta t}\bar{X}^2_\varepsilon(t)\Big]+\E\Big[\int_t^Te^{\beta s}(\bar{Y}_\varepsilon(s))^2ds\Big]+\E\Big[\int_t^T\int_{\R_0}\e^{\beta s} \bar{Z}^2_{\varepsilon}(s,z)\ell(dz)ds\Big]\\ 
&\qquad = -\beta\E\Big[\int_t^T\e^{\beta s}\bar{X}_\varepsilon^2(s)ds\Big]\\
&\qquad \qquad + 2\E\Big[\int_t^T\e^{\beta s}\bar{X}_\varepsilon(s)\{f^0(s,U_\varepsilon(s),V_\varepsilon(s),K_{\varepsilon}(s,\cdot))-f^0(s,\widehat{U}_\varepsilon(s),\widehat{V}_\varepsilon(s),\widehat{K}_\varepsilon(s,\cdot))\}ds\Big].
\end{align*}
Since by Assumptions \eqref{lipschitz-assumption}(B), $f$ is Lipschitz we can dominate the right hand side above as follows
\begin{align*}
&\E\Big[\e^{\beta t}\bar{X}^2_\varepsilon(t)\Big]+\E\Big[\int_t^Te^{\beta s}\bar{Y}^2_\varepsilon(s)ds\Big]+\E\Big[\int_t^T\int_{\R_0}\e^{\beta s} \bar{Z}^2_{\varepsilon}(s,z)\ell(dz)ds\Big]\\ 
&\qquad \leq -\beta\E\Big[\int_t^T\e^{\beta s}\bar{X}^2_\varepsilon(s)ds\Big]\\
&\qquad \qquad + 2C\E\Big[\int_t^T\e^{\beta s}\bar{X}_\varepsilon(s)\Big\{|\bar{U}_\varepsilon(s)|+|\bar{V}_\varepsilon(s)|+\big(\int_{\R_0}\bar{K}_\varepsilon^2(s,z)\ell(dz)\big)^{\frac{1}{2}}\Big\}ds\Big].
\end{align*}
Using the fact that for every $k>0$ and $a,b \in\R$ we have that $2ab\leq ka^2+\frac{b^2}{k}$ and $(a+b+c)^2\leq 3(a^2+b^2+c^2)$, we obtain
\begin{align*}
&\E\Big[\e^{\beta t}\bar{X}^2_\varepsilon(t)\Big]+\E\Big[\int_t^Te^{\beta s}\bar{Y}^2_\varepsilon(s)ds\Big]+\E\Big[\int_t^T\int_{\R_0}\e^{\beta s} \bar{Z}^2_{\varepsilon}(s,z)\ell(dz)ds\Big]\\ &\qquad \leq (6C^2-\beta)\E\Big[\int_t^T\e^{\beta s}\bar{X}_\varepsilon^2(s)ds\Big]\\
&\qquad \qquad + \frac{1}{2}\E\Big[\int_t^T\e^{\beta s}\Big\{\bar{U}^2_\varepsilon(s)+
\bar{V}^2_\varepsilon(s)+\int_{\R_0}\bar{K}^2_\varepsilon(s,z)\ell(dz)\Big\}ds\Big].
\end{align*}
Taking $\beta=6C^2+1$ and noting that $\E\Big[\e^{\beta t}|\bar{X}_\varepsilon(t)|^2\Big]\geq 0$, we obtain  
$$\|(\bar{X}_\varepsilon,\bar{Y}_\varepsilon, \bar{Z}_\varepsilon)\|_{\widetilde{H}^2_{T,\beta} \times H^2_{T,\beta}\times \widehat{H}^2_{T,\beta}}^2\leq\frac{1}{2}\|(\bar{U}_\varepsilon, \bar{V}_\varepsilon, \bar{K}_\varepsilon)\|_{\widetilde{H}^2_{T,\beta} \times H^2_{T,\beta}\times \widehat{H}^2_{T,\beta}}^2,$$
from which we proved that $\phi$ is a strict contraction on $\widetilde{H}^2_{T,\beta} \times H^2_{T,\beta}\times \widehat{H}^2_{T,\beta}$ equipped with the norm $\|.\|_{\widetilde{H}^2_{T,\beta} \times H^2_{T,\beta}\times \widehat{H}^2_{T,\beta}}$ if $\beta= 6C^2+1$. Since the $\beta$-norms are equivalent, this holds for all $\beta>0$. Thus we prove that $\phi$ has a unique fixed point. Hence there exists a unique solution in the space $\widetilde{H}^2_{T,\beta} \times H^2_{T,\beta}\times \widehat{H}^2_{T,\beta}$ to the BSDEJ \eqref{bsdes-approximation}. One can prove that $X_\varepsilon \in S^2_{T,\beta}$ using Burkholder's inequality (see Tang and Li \cite {TL} for more details) and the statement follows.




\end{document}